\numberwithin{equation}{section}
\declaretheorem[name=Theorem, numberwithin=section]{theorem}
\newtheorem{lemma}[theorem]{Lemma}
\newtheorem{proposition}[theorem]{Proposition}
\newtheorem{corollary}[theorem]{Corollary}
\newtheorem{claim}[theorem]{Claim}
\declaretheoremstyle[bodyfont=\normalfont]{remark-style}
\declaretheorem[name={Remark}, style=remark-style, sibling=theorem]{remark}
\declaretheorem[name={Example}, style=remark-style, sibling=theorem]{example}
\newcommand{\RR}{\mathbb{R}}
\newcommand{\NN}{\mathbb{N}} 
\newcommand{\EE}{\mathbb{E}}
\newcommand{\PP}{\mathbb{P}}
\newcommand{\CC}{\mathbb{C}}
\newcommand{\calB}{\mathcal{B}}
\newcommand{\calL}{\mathcal{L}}
\newcommand{\bfT}{\mathbf{T}}
\newcommand{\bfX}{\mathbf{X}}
\newcommand{\vphi}{\varphi}
\renewcommand{\epsilon}{\varepsilon}
\renewcommand{\leq}{\leqslant}
\renewcommand{\geq}{\geqslant}
\renewcommand{\Re}{\operatorname{Re}}
\newcommand{\ind}[1]{{\mathds{1}_{{#1}}}}
\newcommand{\WUSC}[3]{\textrm{\rm WUSC}\left(#1,#2,#3\right)}
\newcommand{\WLSC}[3]{\textrm{\rm WLSC}\left(#1,#2,#3\right)}
\begin{document}
\title{Transition densities of spectrally positive L\'{e}vy processes}
\author{\L{}ukasz Le\.{z}aj}
\address{
		\L{}ukasz Le\.{z}aj\\
        Wydzia\l{} Matematyki,
        Politechnika Wroc\l{}awska\\
        Wyb. Wyspia\'{n}skiego 27\\
        50-370 Wroc\l{}aw\\
        Poland}
\email{lukasz.lezaj@pwr.edu.pl}

\thanks{{\bf MSC2010:} 60J35, 60J75; {\bf Keywords:} spectrally one-sided L\'{e}vy process, heat kernel, Laplace exponent \\ The author was partially supported by the National Science Centre (Poland): grant 2016/23/B/ST1/01665.}

\begin{abstract}
	We prove asymptotic behaviour of transition density for a large class of spectrally one-sided L\'{e}vy processes of unbounded variation satisfying mild condition imposed on the second derivative of the Laplace exponent, or equivalently, on the real part of the characteristic exponent. We also provide sharp two-sided estimates on the density when restricted additionally to processes without Gaussian component.
\end{abstract}
\maketitle

\section{Introduction}\label{sec:1}
The aim of this article is to discuss the behaviour of the transition density of spectrally one-sided L\'{e}vy processes of unbounded variation, or, in other words, spectrally one-sided processes which are not subordinators with drift. Such processes, due to their specific structure, find natural applications in financial models, in particular insurance risk modelling and queue theory and therefore they have been intensively analysed from that point of view. The prominent example here and, at the same time, one of the first questions one would like to ask in the financial setting is the so-called exit problem --- the identification of the distribution of the pair $(\tau_I, X_{\tau_I})$, where $I$ is an open interval --- which has been intensively discussed over last decades. One should list here prominent works of Zolotarev \cite{Zolotarev64}, Tak\'{a}cs \cite{Takacs67}, Emery \cite{Emery73} and Rogers \cite{Rogers90}. Vast majority of results is expressed in terms of so-called scale functions, which have been of independent interest later on, see e.g. \cite{HK2011} or \cite{KKER12}. Also, specific structure of spectrally one-sided processes considerably simplifies the fluctuation theory, which for general L\'{e}vy processes is rather implicit. For details we refer to books of Bertoin \cite[Sections VI and VII]{Bertoin96}, Kyprianou \cite{Kyprianou2006} or Sato \cite{Sato}. That short list is far from being complete and for further discussion we refer to the works above and the references therein.

The abundant number of articles related to financial applications stays in stark contrast with the fact that surprisingly little is known about transition density of general spectrally one-sided L\'{e}vy processes, although it seems that such knowledge could potentially be an important and useful tool. One may find e.g. asymptotic series expansion for the special case of stable processes in the book of Zolotarev \cite[Theorem 2.5.2]{Zolotarev86}, but, to the author's best knowledge, general results has not been obtained. Therefore, the purpose of this article is to fill that gap in the theory and analyse the behaviour of the transition densities of spectrally one-sided L\'{e}vy processes in a feasibly wide generality.

Let us briefly describe our results. From theoretical point of view, absence of negative (positive) jumps allows us to exploit techniques involving the Laplace transform, which can be easily proved to exist (see e.g. the book of Bertoin \cite[Section VII]{Bertoin96}). We exploit that property in the first part, where we concentrate on derivation of asymptotic behaviour of the transition density, which is covered by Theorem \ref{thm:1}. Note that the result is very general, as the only assumption is the lower scaling property with the index $\alpha>0$. In particular, we assume neither upper scaling property nor absolute continuity of the L\'{e}vy measure $\nu(dx)$. Observe that Brownian motion with an independent subordinator satisfying scaling condition is admissible. The case $\alpha=2$ is also included. Recall that without Gaussian component, the condition of having unbounded variation is tantamount to satisfying the integral condition (see Preliminaries for elaboration)
\begin{equation}\label{eq:61}
\int_{(0,\infty)}(1\wedge x)\,\nu(dx)=\infty.
\end{equation}
In fact, if this is the case, the assumption $\alpha>0$ may  seem superfluous at first sight, as the integrability condition \eqref{eq:61}, roughly speaking, requires enough singularity of order at least 1. One may, however, construct a bit pathological example of a L\'{e}vy process with unbounded variation but with lower scaling index strictly smaller than 1. The reader is referred for details to Remark \ref{rem:1}, but we highlight here that such processes are also included. Let us note in passing that, in some cases, it is easier to impose scaling condition on the real part of the characteristic exponent instead of second derivative of the Laplace exponent. These two are in fact equivalent and we state that result in Corollary \ref{cor:5}. If any of them is true, then by \eqref{eq:44} and \eqref{eq:48}, we have, for some $x_0 \geq 0$,
\[
x^2\vphi''(x) \approx \Re \psi(x), \quad x \geq x_0.
\]
Here $\vphi$ is the Laplace exponent and $\psi$ -- the characteristic exponent of of L\'{e}vy process. Admittedly, one of strengths of Theorem \ref{thm:1} lies in the fact that the expression in the exponent is given explicitly and there is no hidden, unknown constant. Nonetheless, in view of the equation above, one can substitute the Laplace exponent with the characteristic exponent, if necessary, at the cost of losing exact formula and implicit constant which will appear instead.

Next, we restrict ourselves to processes of unbounded variation without Gaussian component and focus on upper and lower estimates on the transition density. While the former, covered by Theorem \ref{thm:2}, does not require additional assumptions and is independent of previous results, the latter, consisting of Lemma \ref{lem:3} and Lemma \ref{lem:4} require apparently stronger conditions, i.e. $\alpha \geq 1$ and $\alpha>1$, respectively. They provide local and tail lower estimates on the transition density, and the proof of the latter relies strongly on Theorem \ref{thm:1}. As above, we point out that the condition $\alpha \geq 1$ is not very restrictive in the class of processes with unbounded variation. Finally, we merge all previous results in Theorem \ref{thm:4} in order to obtain sharp two-sided estimates. They require both lower and upper scaling condition with indices strictly separated from 1 and 2, i.e. $1<\alpha \leq \beta <2$. Let us note here that in contrast to symmetric processes, where a lot is already known, a general non-symmetric case is still under development. It usually requires either implying familiar structure or imposing complex assumptions on the process. See e.g. \cite{GS2019, KKPS15, KKPS17, VK13, VKAK13, Picard97}, and the references therein. Recently, estimates for subordinators in a general setting were obtained (see e.g. \cite{CPKW20, ChoKim19, GLT2018}). Still, we are not aware of any article which would treat transition densities of general spectrally one-sided L\'{e}vy process of unbounded variation in a comprehensive way. By inspecting the proofs of Lemmas \ref{lem:3} and \ref{lem:4}, Theorem \ref{thm:3} and Proposition \ref{prop:11} we see that covering the limit cases using our methods is not possible, and it is not very surprising, as in general, they usually require more sophisticated methods, or sometimes even a completely different approach. Nonetheless, the asymptotic behaviour displayed by Theorem \ref{thm:1} covers both $\alpha=1$ and $\alpha=2$.

The article is organized as follows. In Section \ref{sec:prelims} we introduce our setting and prove some basic properties of the Laplace exponent $\vphi$. Section \ref{sec:asymptotics} is devoted to the proof of the asymptotic behaviour of the transition density. Upper and lower estimates are derived in Section \ref{sec:estimates}, while in Section \ref{sec:sharp} we combine all previous results to obtain sharp two-sided estimates on the transition density.

\subsection*{Acknowledgements}
I would like to express my deep gratitude to Tomasz Grzywny for spending a long time of helpful discussions, numerous valuable remarks and for reading the first version of the manuscript.
\section{Preliminaries}\label{sec:prelims}
\subsection*{Notation.} Throughout the paper $c, c_1,C_1,...$ denote positive constants which may vary from line to line. By $c=c(a)$ we mean that the constant $c$ depends only on one parameter $a$. For two functions $f,g\colon (0,\infty) \mapsto [0,\infty]$ we write $f \approx g$ if the quotient $f/g$ stays between two positive constants. Analogous rule is applied to symbols $\lesssim, \gtrsim$. We set $a \wedge b=\min \{a,b\}$ and $a\vee b=\max \{a,b\}$. Borel sets on $\RR$ are denoted by $\calB(\RR)$.

Let $\bfX=(X_t \colon t \geq 0)$ be a spectrally positive L\'{e}vy process, that is a L\'{e}vy process on $\RR$ with only positive jumps, which is not a subordinator with drift. There is a function $\psi \colon \RR \mapsto \CC$ such that for all $t > 0$ and $\xi \in \RR$,
\[
\EE e^{i\xi X_t} = e^{-t\psi(\xi)}.
\]
There are $\sigma \geq 0$, $b \in \RR$ and $\sigma$-finite measure $\nu$ on $(0,\infty)$ satisfying
\[
\int_{(0,\infty)} \big( 1 \wedge x^2 \big) \,\nu(dx)<\infty,
\]
such that for all $\xi \in \RR$,
\begin{equation}\label{eq:14}
\psi(\xi) = \sigma^2\xi^2 -i\xi b - \int_{(0,\infty)}\big(e^{i\xi x}-1 - i\xi x\ind{x<1}\big)\,\nu(dx).
\end{equation}
By $\vphi$ we denote the Laplace exponent of $\bfX$, i.e.
\begin{equation}\label{eq:15}
\EE e^{-\lambda X_t} = e^{t\vphi(\lambda)}, \quad \lambda \geq 0.
\end{equation}
By holomorphic extension one can see that
\begin{align}\label{eq:16}
\vphi(\lambda) = \sigma^2 \lambda^2-b\lambda+ \int_{(0,\infty)} \big( e^{-\lambda x} - 1 + \lambda x \ind{x<1} \big)\,\nu(dx).
\end{align}
Let us introduce a symmetric, continuous and non-decreasing majorant of $\Re \psi$, i.e.
\[
\psi^*(r) = \sup_{|z|\leq r} \Re \psi(z), \quad r>0,
\]
and its generalized inverse function
\[
\psi^{-1}(s) = \sup \{ r>0\colon \psi^*(r)=s \}.
\]
Then we have
\[
\psi^* \big( \psi^{-1}(s) \big) = s, \qquad \psi^{-1} \big( \psi^*(s) \big) \geq s.
\]
Observe that if
\[
\int_{(0,1)}x\,\nu(dx)<\infty,
\]
then the underlying process has almost surely bounded variation and we can rewrite \eqref{eq:14} in the following way:
\[
\psi(\xi) =\sigma^2\xi^2 -i\xi b_0 - \int_{(0,\infty)} \big( e^{i\xi x}-1 \big) \,\nu(dx),
\]
where
\[
b_0 = b+\int_{(0,1)}x\,\nu(dx).
\]
If this is the case, then $X_t = B_t + b_0t+T_t$, where $\mathbf{B}$ is a Brownian motion and $\bfT = (T_t \colon t \geq 0)$ is a subordinator, that is a one-dimensional L\'{e}vy process with non-decreasing paths which starts from 0. Although it is usually clear from the setting, in order to avoid unnecessary confusions, we assume {\bf in the whole paper} that $\bfX$ is of unbounded variation. By \cite[Theorems 21.9 and 24.10]{Sato}, this is true either when $\sigma>0$ or the following condition is satisfied:
\begin{equation}\label{eq:34}
\int_{(0,1)}x\,\nu(dx)=\infty.
\end{equation}
Therefore, in this article we exclude the case, when $\bfX$ is a subordinator with drift. Brownian motion with an independent subordinator, however, is included. For results concerning heat kernel estimates for subordinators we refer the reader to \cite{GLT2018} and the references therein.

By differentiating \eqref{eq:16} twice one can easily deduce that $\vphi$ is convex and since, in view of \cite[Theorem 24.10]{Sato}, $\PP(X_1\leq-1)>0$, we have $\vphi(\lambda) \to \infty$ as $\lambda \to \infty$. It is not, however, necessarily positive. Indeed, by differentiating \eqref{eq:15} with respect to $\lambda$, setting $t=1$ and taking the limit $\lambda \to 0^+$ yields
\begin{equation}\label{eq:30}
\EE X_1 = -\vphi'(0^+) = b+\int_{[1,\infty)} x \,\nu(dx).
\end{equation}
We see that $\EE X_1 \in (-\infty,+\infty]$. In particular, if $\EE X_1>0$ then $\vphi < 0$ in a neighbourhood of the origin. Let $\theta_0$ be the largest root of $\vphi$. Similarly, let
\[
\theta_1 = \inf \big\lbrace s>0\colon \vphi'(s)>0 \big\rbrace
\]
We have $\theta_1 \leq \theta_0$ and the equality may occur only for the case $\theta_0=\theta_1=0$. Note that there is always a root of $\vphi$ at $\lambda =0$ and, due to convexity of $\vphi$, at most one root for $\lambda>0$, precisely in the case $\theta_1>0$. By the Wiener-Hopf factorization we get that $\vphi$ is necessarily of the form
\begin{align}\label{eq:31}
\vphi(\lambda) = (\lambda-\theta_0)\phi(\lambda),
\end{align}
where $\phi$ is a Laplace exponent of a (possibly killed) subordinator, known as an ascending ladder height process, with the L\'{e}vy measure $\gamma$ given by
\[
\gamma((x,\infty)) = e^{\theta_0}x \int_x^{\infty} e^{-\theta_0 u} \nu((u,\infty)) \,d u.
\]
See \cite[Section 4]{HK2011} and the references therein. It is known that $\phi$ is a Bernstein function, i.e. a function in $C^{\infty}$ such that its derivative is completely monotone (see e.g. \cite{SSVBernstein} for a thorough analysis).

Following Pruitt \cite{Pruitt81}, we define concentration functions $K$ and $h$ by setting
\[
K(r) = \frac{\sigma^2}{r^2}+\frac{1}{r^2} \int_{(0,r)} s^2 \, \nu(ds), \quad r>0,
\]
and
\[
h(r) = \frac{\sigma^2}{r^2}+\int_{(0,\infty)}  \bigg( 1 \wedge \frac{s^2}{r^2} \bigg) \,\nu(ds), \quad r>0.
\]
Clearly, $h(r) \geq K(r)$. Notice that by the Fubini-Tonelli theorem,
\begin{equation}\label{eq:42}
h(r) = 2\int_r^{\infty} K(s)s^{-1} \,d s.
\end{equation}
Furthermore, by \cite[Lemma 4]{Grzywny14}, for all $r>0$,
\begin{equation}\label{eq:43}
\frac{1}{24}h \big( r^{-1} \big) \leq \psi^*(r) \leq 2h \big( r^{-1} \big).  
\end{equation}
Moreover, if the L\'{e}vy measure $\nu(dx)$ has a monotone density $\nu(x)$, then obviously, for any $r>0$,
\begin{equation}\label{eq:78}
r\nu(r) \leq K(r) \leq h(r).
\end{equation}

Let us introduce notions of almost monotonicity and scaling property, which are ubiquitous in the paper. First, a function $f\colon [x_0,\infty) \mapsto [0,\infty)$ is \emph{almost increasing} on $(x_1,\infty)$ for some $x_1 \geq x_0$ if there is $c \in (0,1]$ such that for all $y\geq x > x_1$,
\[
f(y) \geq cf(x).
\]
Similarly, it is \emph{almost decreasing} on $(x_1,\infty)$ if there is $C \geq 1$ such that for all $y \geq x >x_1$,
\[
f(y) \leq Cf(x).
\]

We say that a function $f \colon [x_0,\infty) \mapsto [0,\infty)$ has \emph{weak lower scaling property} at infinity, if there are $\alpha \in \RR$, $c \in (0,1]$ and $x_1 \geq x_0$ such that for all $\lambda \geq 1$ and $x>x_1$,
\[
f(\lambda x) \geq c\lambda^{\alpha}f(x).
\]
In such case we write $f \in \WLSC{\alpha}{c}{x_1}$. Analogously, we say that $f$ has a \emph{weak upper scaling property} at infinity ($f \in \WUSC{\beta}{C}{x_1}$), if there are $\beta \in \RR$, $C \geq 1$ and $x_1 \geq x_0$ such that for all $\lambda \geq 1$ and $x>x_1$,
\[
f(\lambda x) \leq C\lambda^{\beta}f(x).
\]
A convenient equivalent definition of scaling property is provided \cite{BGR14}. Namely, $f \in \WLSC{\alpha}{c}{x_1}$ if and only if the function
\[
(x_1,\infty) \ni x \mapsto f(x)x^{-\alpha}
\]
is almost increasing. Analogously, $f \in \WUSC{\beta}{C}{x_1}$ if and only if the function
\[
(x_1,\infty) \ni x \mapsto f(x)x^{-\beta}
\]
is almost decreasing. Finally, a function $f \colon [x_0,\infty) \mapsto [0,\infty)$ has \emph{a doubling property} on $(x_1,\infty)$ if there are $x_1 \geq x_0$ and $C \geq 1$ such that for all $x > x_1$,
\[
C^{-1}f(x) \leq f(2x) \leq Cf(x).
\]
Note here that a non-increasing function with weak lower scaling property and non-decreasing function with weak upper scaling property have a doubling property. In particular, $\vphi'' \in \WLSC{\alpha-2}{c}{x_0}$ has a doubling property.
\subsection{Properties of the Laplace exponent \texorpdfstring{$\vphi$}{}}
First let us observe that by differentiating \eqref{eq:16} and using the fact that for all $x>0$,
\[
xe^{-x} \leq 1-e^{-x}, 
\]
  we get that for all $\lambda \geq 0$,
\begin{equation}\label{eq:20}
\vphi(\lambda) \leq \lambda \vphi'(\lambda).
\end{equation}
Furthermore, if $\theta_0>0$ then $-\vphi$ is positive and concave on $(0,\theta_1)$. Thus, for all $x \leq \theta_1$ and $\lambda \leq 1$,
\[
-\vphi(x)- \big( -\vphi(\lambda x) \big) \leq (1-\lambda)x \big( -\vphi'(\lambda x) \big).
\]
Thus, by \eqref{eq:20}, for all $x \leq \theta_1$ and $\lambda \leq 1$,
\begin{equation}\label{eq:47}
\lambda \big( -\vphi(x)\big) \leq  -\vphi(\lambda x).
\end{equation}
\begin{proposition}\label{prop:1}
	There are $C_1, C_2 \geq 1$ such that $\vphi' \in \WUSC{1}{C_1}{2\theta_1}$ and $\vphi \in \WUSC{2}{C_2}{2\theta_0}$. Furthermore, if $\theta_0 = 0$ and $\vphi'(0)\leq 0$ then $C_1=C_2=1$, i.e. for all $x>0$ and $\lambda \geq 1$,
	\begin{equation}\label{eq:46}
	\vphi'(\lambda x) \leq \lambda \vphi'(x) \qquad \text{and} \qquad \vphi(\lambda x) \leq \lambda^2\vphi(x).
	\end{equation}
\end{proposition}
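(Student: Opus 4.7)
The plan is to exploit two structural facts about $\vphi$. First, differentiating \eqref{eq:16} twice shows that $\vphi''$ is completely monotone, hence non-increasing, and $\vphi'$ is concave. Second, by the definition of $\theta_1$ together with convexity of $\vphi$, one has $\vphi'(\theta_1)=0$ when $\theta_1>0$ and $\vphi'(0^+)\geq 0$ when $\theta_1=0$. Writing $\vphi'(x)=\vphi'(\theta_1)+\int_{\theta_1}^x \vphi''(t)\,dt$ and bounding the integrand from below by $\vphi''(x)$ yields the pointwise estimate
\[
\vphi''(x) \leq \frac{2\vphi'(x)}{x}, \qquad x>2\theta_1,
\]
since $x-\theta_1>x/2$ once $x>2\theta_1$ (and for $\theta_1=0$ the factor of $2$ is not even needed).

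From here the weak upper scaling of $\vphi'$ is immediate: for $\lambda\geq 1$ and $x>2\theta_1$, monotonicity of $\vphi''$ gives
\[
\vphi'(\lambda x)-\vphi'(x) = \int_x^{\lambda x} \vphi''(t)\,dt \leq (\lambda-1)x\,\vphi''(x) \leq 2(\lambda-1)\vphi'(x),
\]
so $\vphi'(\lambda x)\leq 2\lambda\vphi'(x)$ and $C_1=2$ works. To pass to $\vphi$ I need the companion comparison $\vphi(x)\gtrsim x\vphi'(x)$ on $(2\theta_0,\infty)$. Using concavity of $\vphi'$ and $\vphi'(\theta_0)\geq 0$ one has $\vphi'(t)\geq (t-\theta_0)(x-\theta_0)^{-1}\vphi'(x)$ for $t\in[\theta_0,x]$, and integrating gives $\vphi(x)\geq (x-\theta_0)\vphi'(x)/2\geq x\vphi'(x)/4$ for $x>2\theta_0$. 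Plugging the pointwise form of the $\vphi'$-bound back under the integral,
\[
\vphi(\lambda x)-\vphi(x) = \int_x^{\lambda x}\vphi'(t)\,dt \leq \frac{2\vphi'(x)}{x}\int_x^{\lambda x} t\,dt = x\vphi'(x)(\lambda^2-1) \leq 4\vphi(x)(\lambda^2-1),
\]
which yields $\vphi(\lambda x)\leq 4\lambda^2\vphi(x)$, so $C_2=4$ is admissible.

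For the refined statement with $C_1=C_2=1$, the hypothesis $\theta_0=0$ forces $\theta_1=0$ by the remark preceding the proposition, and combined with $\vphi'(0)\leq 0$ (together with $\vphi'(0^+)\geq 0$ that $\theta_1=0$ imposes) this pins down $\vphi'(0^+)=0$. The factorization \eqref{eq:31} then reads $\vphi(\lambda)=\lambda\phi(\lambda)$, with $\phi$ Bernstein; since $\phi(\lambda)=\vphi(\lambda)/\lambda\to\vphi'(0^+)=0$ as $\lambda\to 0^+$, $\phi$ vanishes at the origin. By concavity $\phi(\lambda x)\leq\lambda\phi(x)$ for $\lambda\geq 1$, and $\phi'$ is completely monotone hence non-increasing. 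A one-line computation then delivers the sharp constants:
\[
\vphi(\lambda x) = \lambda x\,\phi(\lambda x) \leq \lambda^2\vphi(x), \qquad \vphi'(\lambda x) = \phi(\lambda x)+\lambda x\,\phi'(\lambda x) \leq \lambda\bigl(\phi(x)+x\phi'(x)\bigr) = \lambda\vphi'(x).
\]

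The main point requiring care is the possible gap between $\theta_1$ and $\theta_0$: in the general regime the $\vphi'$-scaling holds past $2\theta_1$ but the matching bound $\vphi\gtrsim x\vphi'$ requires $x>2\theta_0$, so the two scalings cannot be obtained on the same range, and proving the $\vphi'$-bound first (then feeding it back into the $\vphi$-side) is what keeps the thresholds cleanly separated. The sharp case exploits a genuinely different mechanism, namely the Bernstein-function scaling $\phi(\lambda x)\leq\lambda\phi(x)$, which is tight and so preserves the sharp constants that the $\vphi''$ argument loses.
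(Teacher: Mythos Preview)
Your proof is correct. Both the general scaling bounds and the sharp case go through as written.

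The organization differs from the paper's, and the comparison is worth noting. For the general case the paper uses a single substitution: writing
\[
\vphi'(\lambda x)-\vphi'(\lambda\theta_1)=\lambda\int_{\theta_1}^{x}\vphi''(\lambda s)\,ds\leq\lambda\int_{\theta_1}^{x}\vphi''(s)\,ds,
\]
and then controlling the leftover term $\vphi'(\lambda\theta_1)$ separately when $\theta_1>0$; this produces a constant $C_1=1+\theta_1\vphi''(\theta_1)/\vphi'(2\theta_1)$ that depends on $\vphi$. Your route instead first establishes the pointwise comparisons $x\vphi''(x)\leq 2\vphi'(x)$ on $(2\theta_1,\infty)$ and $x\vphi'(x)\leq 4\vphi(x)$ on $(2\theta_0,\infty)$, and then feeds these into the increment estimates. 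This is slightly longer but buys two things: the constants $C_1=2$, $C_2=4$ are universal (and in fact at least as good as the paper's, since one checks $\theta_1\vphi''(\theta_1)/\vphi'(2\theta_1)\geq 1$), and the intermediate inequalities are exactly those recorded later in the paper as Proposition~\ref{prop:5}, so you are effectively proving both results at once.

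For the sharp case the paper's substitution argument already gives $C_1=1$ directly once $\vphi'(0)=0$, with no extra mechanism needed. Your use of the Wiener--Hopf factorization $\vphi(\lambda)=\lambda\phi(\lambda)$ and the Bernstein-function inequality $\phi(\lambda x)\leq\lambda\phi(x)$ is a genuinely different argument; it is structurally appealing and makes transparent why the exponent $2$ for $\vphi$ splits as $1+1$, though it invokes more machinery than strictly necessary here.
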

\begin{proof}
	Let $\lambda \geq 1$. First observe that by monotonicity of $\vphi''$, for all $x>\theta_1$,
	\begin{align*}
	\vphi'(\lambda x)-\vphi'(\lambda \theta_1) = \int_{\lambda \theta_1}^{\lambda x} \vphi''(s) \, ds &= \lambda \int_{\theta_1}^{x} \vphi''(\lambda s)\, ds \\ &\leq \lambda \int_{\theta_1}^{x}\vphi''(s)\, ds  = \lambda \vphi'(x).
	\end{align*}
	Thus, we get the claim for $\vphi'$ in the case $\theta_1=0$ and $\vphi'(0)\leq 0$. If $\theta_1=0$ but $\vphi'(0)>0$ then we clearly have $\vphi'(0) \leq \lambda \vphi'(x)$ for all $x >0$ and $\lambda \geq 1$, and we get the claim with $C_1=2$. Finally, if $\theta_1>0$ then it remains to prove that there is $c > 0$ such that for all $x>2\theta_1$ and $\lambda \geq 1$, 
	\begin{align}\label{eq:19}
	\vphi'(\lambda \theta_1) \leq c\lambda \vphi'(x).
	\end{align}
	Since $\vphi'(\theta_1)=0$, by monotonicity of $\vphi''$ and $\vphi'$ we obtain
	\[
	\vphi'(\lambda \theta_1) = \int_{\theta_1}^{\lambda \theta_1} \vphi''(s) \, d s \leq (\lambda-1)\theta_1 \vphi''(\theta_1) \leq \lambda \theta_1\vphi''(\theta_1) \leq c\lambda\vphi'(x),
	\]
	where $c=(\theta_1\vphi''(\theta_1))/\vphi'(2\theta_1)$, and \eqref{eq:19} follows. Now, with the first part proved, a similar argument applies to the second and therefore it is omitted.
\end{proof}
\begin{proposition}\label{prop:5}
		There is $C = C(\vphi)\geq 1$ such that for all $x>2\theta_0$ we have
		\begin{equation}\label{eq:21}
		\vphi(x) \leq x\vphi'(x) \leq C\vphi(x).
		\end{equation}
		Furthermore, for all $x>2\theta_1$,
		\[
		2\vphi'(x) \geq x\vphi''(x).
		\]
\end{proposition}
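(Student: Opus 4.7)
The plan is to derive both inequalities by the standard trick of integrating the monotone derivative over an interval of length comparable to $x$, then invoking the scaling properties from Proposition \ref{prop:1}.

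For the left inequality $\vphi(x) \leq x\vphi'(x)$, there is nothing new to do: this is precisely \eqref{eq:20}. For the right inequality, since $\vphi''\geq 0$ the derivative $\vphi'$ is non-decreasing, so for any $x>0$,
\[
x\vphi'(x) \leq \int_x^{2x} \vphi'(s)\,ds = \vphi(2x) - \vphi(x).
\]
When $x>2\theta_0$ we have $\vphi(x)>0$ (because $\theta_0$ is the largest root and $\vphi$ tends to $\infty$), hence $\vphi(2x)-\vphi(x)\leq \vphi(2x)$. Applying the upper scaling $\vphi \in \WUSC{2}{C_2}{2\theta_0}$ from Proposition \ref{prop:1} with $\lambda=2$ yields $\vphi(2x) \leq 4C_2\vphi(x)$, so $C = 4C_2$ works.

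For the second assertion, the key observation is that by differentiating \eqref{eq:16} twice,
\[
\vphi''(\lambda) = 2\sigma^2 + \int_{(0,\infty)} x^2 e^{-\lambda x}\,\nu(dx),
\]
is a non-increasing function of $\lambda$. Therefore, for $x>2\theta_1$,
\[
\frac{x}{2}\vphi''(x) \leq \int_{x/2}^{x} \vphi''(s)\,ds = \vphi'(x) - \vphi'(x/2).
\]
Since $x/2 > \theta_1$, the definition of $\theta_1$ together with monotonicity of $\vphi'$ gives $\vphi'(x/2)\geq 0$, so we may drop the last term and multiply by $2$ to conclude $x\vphi''(x)\leq 2\vphi'(x)$.

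There is no serious obstacle here; the only mild subtleties are (i) checking positivity of $\vphi(x)$ for $x>2\theta_0$ and of $\vphi'(x/2)$ for $x>2\theta_1$, both of which follow directly from the definitions of $\theta_0$ and $\theta_1$, and (ii) making sure the indices in the invocation of $\WUSC{2}{C_2}{2\theta_0}$ match, which they do by construction.
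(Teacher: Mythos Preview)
Your proof is correct and follows essentially the same approach as the paper: both bound $x\vphi'(x)$ by $\vphi(2x)-\vphi(x)$ via monotonicity of $\vphi'$ and then invoke the upper scaling of $\vphi$ from Proposition~\ref{prop:1}; for the second assertion both exploit monotonicity of $\vphi''$ to compare $x\vphi''(x)$ with an integral of $\vphi''$. The only cosmetic difference is the interval of integration in the second part (you use $[x/2,x]$ while the paper uses $[\theta_1,x]$), which leads to the same constant $2$.
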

\begin{proof}
	We first observe that the first inequality of \eqref{eq:21} follows from \eqref{eq:20}.	Let $x>2\theta_0$ and $1\leq b<a$. By monotonicity of $\vphi'$,
	\[
	\vphi(ax)-\vphi(bx) \geq x(a-b)\vphi'(bx).
	\]
	Put $b=1$ and $a=2$. By Proposition \ref{prop:1},
	\[
	\frac{x\vphi'(x)}{\vphi(x)} \leq \frac{\vphi(2x)}{\vphi(x)}-1 \leq 4\tilde{C}-1,
	\]
	where $\tilde{C}$ is taken from Proposition \ref{prop:1}, and the first part follows. For the proof of the second part it remains to observe that by monotonicity of $\vphi''$, for $x>2\theta_1$,
		\[
	\frac{\vphi'(x)}{x\vphi''(x)} \geq \frac{\vphi'(x)-\vphi'(\theta_1)}{x\vphi''(x)}= \frac1x \int_{\theta_1}^x \frac{\vphi''(s)}{\vphi''(x)} \, d s \geq 1-\frac{\theta_1}{x}.
	\] 
	Thus, for $x>2\theta_1$ we get the claim.
\end{proof}
\begin{corollary}\label{cor:3}
	There is $c = c(\vphi) > 0$ such that for all $x \in (0,\theta_0/2) \cup (2\theta_0,\infty)$,
	\begin{align*}
	\big\lvert \vphi(x) \big\rvert \geq c x^2 \vphi''(x).
	\end{align*}
	The implied constant $c$ depends only on $\theta_0$.
\end{corollary}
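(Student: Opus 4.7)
The plan is to split the domain into the two subintervals $(2\theta_0, \infty)$ and $(0, \theta_0/2)$ (the latter is vacuous if $\theta_0 = 0$) and argue differently in each. For $x > 2\theta_0$ we have $\vphi(x) > 0$, so $|\vphi(x)| = \vphi(x)$; since $\theta_1 \leq \theta_0$, both conclusions of Proposition \ref{prop:5} apply. The first gives $\vphi(x) \geq C^{-1} x \vphi'(x)$ and the second gives $\vphi'(x) \geq (x/2)\vphi''(x)$, so chaining them yields $\vphi(x) \geq (2C)^{-1} x^2 \vphi''(x)$.

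For $x \in (0, \theta_0/2)$ (nonempty only when $\theta_0 > 0$), Proposition \ref{prop:5} is unavailable near the root of $\vphi$, and I will instead exploit the Wiener--Hopf factorization \eqref{eq:31}, $\vphi(\lambda) = (\lambda - \theta_0)\phi(\lambda)$, with $\phi$ a Bernstein function. Since $\phi \geq 0$, we have $|\vphi(x)| = (\theta_0 - x)\phi(x) \geq (\theta_0/2)\phi(x)$, so it suffices to show $x^2 \vphi''(x) \leq C' \phi(x)$ for an absolute constant. Differentiating twice gives
\[
x^2 \vphi''(x) = 2x^2 \phi'(x) + x^2 (\theta_0 - x)\bigl(-\phi''(x)\bigr),
\]
and both summands are nonnegative because $\phi'' \leq 0$ ($\phi'$ is completely monotone) and $x < \theta_0$, so they may be bounded separately.

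I then estimate each summand. Concavity of $\phi$ together with $\phi(0) \geq 0$ yields $x\phi'(x) \leq \phi(x)$, so the first summand is at most $2x\phi(x)$. For the second, writing $\phi(\lambda) = \kappa + d\lambda + \int_{(0,\infty)} (1 - e^{-\lambda s})\,\gamma(ds)$ gives $-\lambda^2 \phi''(\lambda) = \int (\lambda s)^2 e^{-\lambda s}\,\gamma(ds)$, and the elementary pointwise inequality $u^2 e^{-u} \leq 1 - e^{-u}$ (which follows since $g(u) := 1 - e^{-u} - u^2 e^{-u}$ satisfies $g(0) = 0$ and $g'(u) = e^{-u}(1-u)^2 \geq 0$) yields $-\lambda^2 \phi''(\lambda) \leq \phi(\lambda)$, so the second summand is at most $(\theta_0 - x)\phi(x)$. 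Adding, $x^2 \vphi''(x) \leq (x + \theta_0)\phi(x) \leq (3\theta_0/2)\phi(x) \leq 3|\vphi(x)|$, establishing the claim on this interval with constant $1/3$. Combining the two cases, the corollary holds with $c = \min\{(2C)^{-1}, 1/3\}$. The main obstacle is the neighbourhood of the root $\theta_0$: the scaling estimates of Proposition \ref{prop:5} degenerate there, and the argument must pivot to the Bernstein factor $\phi$, where the pointwise bound $u^2 e^{-u} \leq 1 - e^{-u}$ is what makes the L\'evy measure of $\phi$ compatible with $\phi$ itself after multiplication by $x^2$.
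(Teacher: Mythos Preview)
Your proof is correct and follows essentially the same route as the paper: Proposition~\ref{prop:5} for $x>2\theta_0$, and the Wiener--Hopf factorization $\vphi=(\lambda-\theta_0)\phi$ together with the Bernstein structure of $\phi$ for $x<\theta_0/2$. The only difference is that where the paper invokes \cite[Lemma~3.9.34]{Jacob} (namely $\phi(\lambda)\geq \tfrac{(-1)^{n+1}}{n!}\lambda^n\phi^{(n)}(\lambda)$ for $n=1,2$) to bound $x\phi'(x)$ and $x^2(-\phi''(x))$ by $\phi(x)$, you supply direct elementary arguments---concavity plus $\phi(0)\geq 0$ for the first, and the pointwise inequality $u^2e^{-u}\leq 1-e^{-u}$ applied under the L\'evy integral for the second---thereby making the proof self-contained and yielding the explicit constant $1/3$ on that interval.
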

\begin{proof}
	In view of Proposition \ref{prop:5}, it remains to prove that if $\theta_0>0$ then there is $c>0$ such that for all $x<\theta_0/2$,
	\begin{equation}\label{eq:33}
	-\vphi(x) \geq cx^2\vphi''(x).
	\end{equation}
	From \eqref{eq:31}, we have
	\[
	\vphi''(x) = 2\phi'(x) + (\theta_0-x)\big(-\phi''(x)\big).
	\]
	By \cite[Lemma 3.9.34]{Jacob}, for any $n \in \NN_+$,
	\begin{equation*}
	\phi(\lambda) \geq \frac{(-1)^{n+1}}{n!}\lambda^n \phi^{(n)}(\lambda), \quad \lambda > 0.
	\end{equation*}
	Hence, for $x<\theta_0/2$,
	\begin{equation}\label{eq:32}
	-\vphi(x) = (\theta_0-x) \phi(x) \gtrsim \phi(x) \geq x\phi'(x) \gtrsim x^2\phi'(x). 
	\end{equation}
	Moreover,
	\[
	(\theta_0-x) \phi(x) \gtrsim (\theta_0-x) x^2\big(-\phi''(x)\big),
	\]
	which together with \eqref{eq:32} imply \eqref{eq:33}, and the claim follows.
\end{proof}

Now we deduce some properties of $\vphi$ and its derivatives which follow from scaling properties.
\begin{proposition}\label{prop:2}
	Suppose $\vphi'' \in \WLSC{\alpha-2}{c}{x_0}$ for some $c \in (0,1]$, $x_0 \geq 0$ and $\alpha > 0$. Then $\vphi' \in \WLSC{\alpha-1}{c}{x_0 \vee \theta_1}$ and $\vphi \in \WLSC{\alpha}{c}{x_0 \vee \theta_0}$.
\end{proposition}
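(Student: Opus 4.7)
My plan is to derive both claims by direct integration, exploiting the identities $\vphi'(\theta_1) = 0$ (when $\theta_1>0$, by continuity and the very definition of $\theta_1$) and $\vphi(\theta_0) = 0$ to convert cleanly the WLSC of $\vphi''$ first into WLSC of $\vphi'$, and then into WLSC of $\vphi$.

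For the scaling of $\vphi'$, fix $\lambda \geq 1$ and $x > x_0 \vee \theta_1$ and assume first the main case $\theta_1 \geq x_0$ with $\theta_1 > 0$. Using $\vphi'(\theta_1) = 0$ and the substitution $s = \lambda u$,
\[
\vphi'(\lambda x) = \int_{\theta_1}^{\lambda x} \vphi''(s)\, ds = \lambda \int_{\theta_1/\lambda}^x \vphi''(\lambda u)\, du \geq \lambda \int_{\theta_1}^x \vphi''(\lambda u)\, du,
\]
where the last inequality uses the inclusion $[\theta_1,x] \subset [\theta_1/\lambda, x]$ and non-negativity of $\vphi''$. Since $u > \theta_1 \geq x_0$ throughout the range, the hypothesis $\vphi'' \in \WLSC{\alpha-2}{c}{x_0}$ yields $\vphi''(\lambda u) \geq c \lambda^{\alpha-2} \vphi''(u)$, and hence
\[
\vphi'(\lambda x) \geq c \lambda^{\alpha - 1} \int_{\theta_1}^x \vphi''(u)\, du = c \lambda^{\alpha-1} \vphi'(x),
\]
which is the claimed WLSC with the same constant $c$. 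The scaling of $\vphi$ follows by a parallel argument with $\vphi,\vphi',\theta_0$ replacing $\vphi',\vphi'',\theta_1$: write $\vphi(\lambda x) = \lambda \int_{\theta_0/\lambda}^x \vphi'(\lambda u)\, du$, drop the integration down to $\theta_0$, and invoke the WLSC of $\vphi'$ just established (noting $\theta_0 \geq \theta_1$, so this scaling is available on the integration range).

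The main obstacle lies in the boundary cases $\theta_1 = 0$ (and analogously $\theta_0 = 0$), in which $\vphi'(0^+) \in [0,\infty)$ need not vanish and the integration identity picks up a residual term $\vphi'(0^+)(1 - c\lambda^{\alpha-1})$. When $\alpha \leq 1$ this is harmless, since $c\lambda^{\alpha-1} \leq 1$ makes the residual non-negative and in any case monotonicity of $\vphi'$ gives the WLSC immediately; when $\alpha > 1$, however, the residual may be negative for large $\lambda$, and one must either combine the integral bound with the trivial lower bound $\vphi'(\lambda x) \geq \vphi'(0^+)$ or, more elegantly, invoke Proposition \ref{prop:5} --- which gives $\vphi'(x) \asymp x\vphi''(x)$ for $x > 2\theta_1$ --- to transfer the scaling directly from $x \vphi''(x)$, a quantity that inherits WLSC with index $\alpha-1$ from the hypothesis on $\vphi''$ by trivial algebra.
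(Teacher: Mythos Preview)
Your main integration argument coincides with the paper's: write $\vphi'(\lambda x)$ as an integral of $\vphi''$ anchored where $\vphi'$ vanishes (or is non-negative), rescale, and apply the WLSC of $\vphi''$ inside the integral. The paper integrates from $\lambda(\theta_1\vee x_0)$ and discards the non-negative boundary term $\vphi'(\lambda(\theta_1\vee x_0))$, while you integrate from $\theta_1$ using $\vphi'(\theta_1)=0$ and then shrink the range to $[\theta_1,x]$; in the case $\theta_1\geq x_0$ these are the same computation. Your observation that for $\alpha\leq 1$ monotonicity of $\vphi'$ alone already gives the WLSC is correct and is a clean shortcut the paper does not isolate.

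Where your proposal has a gap is the remaining boundary situation (essentially $\theta_1<x_0$, or $\theta_1=0$ with $\vphi'(0^+)>0$, combined with $\alpha>1$). Your ``more elegant'' route misquotes Proposition~\ref{prop:5}: that proposition only yields the one-sided bound $x\vphi''(x)\leq 2\vphi'(x)$, not the two-sided $\vphi'(x)\asymp x\vphi''(x)$. The reverse inequality is the content of Proposition~\ref{prop:3}, whose hypothesis $\vphi'(\theta_1)=0$ is exactly what fails here, and without it one cannot transfer the WLSC from $x\vphi''(x)$ to $\vphi'(x)$ (indeed, with $\vphi''(u)=u^{\alpha-2}$ and $\vphi'(0^+)>0$ the ratio $\vphi'(x)/(x\vphi''(x))$ blows up as $x\to 0^+$). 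Your first alternative, to ``combine the integral bound with the trivial lower bound $\vphi'(\lambda x)\geq\vphi'(0^+)$'', is not developed and does not obviously suffice either: for small $x$ and large $\lambda$ neither bound individually dominates $c'\lambda^{\alpha-1}\vphi'(x)$. The paper handles this case differently: since $\vphi'\to\infty$, choose $x_1>x_0$ with $\vphi'(x)\geq 2\vphi'(x_0)$ for all $x>x_1$, so that the residual is absorbed and WLSC holds on $(x_1,\infty)$; then extend back to $(x_0,\infty)$ using continuity and positivity of $\vphi'$ on the compact interval $[x_0,x_1]$, at the expense of a worse constant.
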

\begin{proof}
	We proceed as in the proof of Proposition \ref{prop:1}. Let $\lambda \geq 1$ and $x>\theta_1 \vee x_0$. By the weak scaling property of $\vphi''$,
	\begin{align*}
	\vphi'(\lambda x) &\geq \vphi'(\lambda x)-\vphi'(\lambda (\theta_1 \vee x_0)) = \int_{\lambda (\theta_1 \vee x_0)}^{\lambda x} \vphi''(s)\, d s = \lambda \int_{\theta_1 \vee x_0}^x \vphi''(\lambda s)\, d s \\ &\geq c\lambda^{\alpha-1} \int_{\theta_1 \vee x_0}^x \vphi''(s) \, d s = c\lambda^{\alpha-1} \big( \vphi'(x)-\vphi'(\theta_1 \vee x_0) \big),
	\end{align*}
	and the claim follows for the case $\theta_1 \geq x_0$ and $\vphi'(\theta_1) \leq 0$. If $\theta_1<x_0$ then by differentiating \eqref{eq:16} we conclude that $\vphi'(\lambda) \to \infty$ as $\lambda \to \infty$. Since it is also monotone, there is $x_1>x_0$ such that $\vphi'(x) \geq 2\vphi'(x_0)$ for all $x>x_1$, and consequently, $\vphi' \in \WLSC{\alpha-1}{\tilde{c}}{x_1}$ for some $\tilde{c} \in (0,1]$. Finally, using continuity and positivity of $\vphi'$ we can extend the scaling area to $(x_0,\infty)$ at the expense of worsening the constant. The same reasoning applies for the case $\theta_1=0$ and $\vphi'(0) > 0$. The proof of the weak scaling property of $\vphi$ follows by an analogous argument.
\end{proof}

\begin{proposition}\label{prop:3}
	Let $\vphi$ be a Laplace exponent of a spectrally positive L\'{e}vy process of infinite variation such that $\vphi'(\theta_1)=0$. Then $\vphi' \in \WLSC{\tau}{c}{x_0 \vee \theta_1}$ for some $c \in (0,1]$, $x_0 \geq 0$ and $\tau >0$ if and only if $\vphi'' \in \WLSC{\tau-1}{c'}{x_0}$ for some $c' \in (0,1]$. Furthermore, if $\vphi' \in \WLSC{\tau}{c}{x_0 \vee \theta_1}$ then there is $C \geq 1$ such that for all $x>x_0 \vee 2\theta_1$,
	\begin{equation}\label{eq:63}
	C^{-1}\vphi'(x) \leq x\vphi''(x) \leq C\vphi'(x).
	\end{equation}
\end{proposition}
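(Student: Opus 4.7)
The equivalence naturally splits. The direction $\vphi''\in\WLSC{\tau-1}{c'}{x_0}\Rightarrow\vphi'\in\WLSC{\tau}{c}{x_0\vee\theta_1}$ is just Proposition~\ref{prop:2} applied with $\alpha=\tau+1>0$, so no new argument is required there.

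For the converse, my plan is to first establish the two-sided comparison \eqref{eq:63} and then bootstrap it together with the scaling of $\vphi'$ to obtain the scaling of $\vphi''$. The upper estimate $x\vphi''(x)\leq 2\vphi'(x)$ on $(2\theta_1,\infty)$ is exactly the second part of Proposition~\ref{prop:5}. For the matching lower estimate, the assumption $\tau>0$ lets me pick $\lambda_0\geq 2$ with $c\lambda_0^\tau\geq 2$; then for $x>x_0\vee\theta_1$ the weak lower scaling of $\vphi'$ yields $\vphi'(\lambda_0 x)\geq 2\vphi'(x)$, whence
\[
\vphi'(x)\leq \vphi'(\lambda_0 x)-\vphi'(x)=\int_x^{\lambda_0 x}\vphi''(s)\,ds\leq (\lambda_0-1)\,x\,\vphi''(x),
\]
where the identity uses $\vphi'(\theta_1)=0$ and the final inequality uses that $\vphi''$ is non-increasing (visible from \eqref{eq:16}). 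This furnishes the lower half of \eqref{eq:63} with $C=\lambda_0-1$.

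Once \eqref{eq:63} is in hand, the weak lower scaling of $\vphi''$ drops out by sandwiching: for $\lambda\geq 1$ and $x>x_0\vee 2\theta_1$,
\[
\vphi''(\lambda x)\geq C^{-1}(\lambda x)^{-1}\vphi'(\lambda x)\geq C^{-1}c\lambda^{\tau-1}x^{-1}\vphi'(x)\geq C^{-2}c\lambda^{\tau-1}\vphi''(x),
\]
giving $\vphi''\in\WLSC{\tau-1}{C^{-2}c}{x_0\vee 2\theta_1}$. The scaling region will then be pushed down to $x_0$ by exploiting continuity and strict positivity of $\vphi''$ on the compact interval $[x_0,x_0\vee 2\theta_1]$, at the cost of worsening the constant, exactly as at the end of the proof of Proposition~\ref{prop:2}.

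The main technical hurdle is the lower half of \eqref{eq:63}. Both standing hypotheses enter here: $\tau>0$ is what guarantees the existence of a finite scaling factor $\lambda_0$ that doubles $\vphi'$, while $\vphi'(\theta_1)=0$ ensures that after one scaling step the difference $\vphi'(\lambda_0 x)-\vphi'(x)$ still dominates $\vphi'(x)$, with no stray additive constant to spoil the integral bound.
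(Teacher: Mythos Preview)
Your proof is correct and follows essentially the same route as the paper's: both argue the direction $\vphi'\Rightarrow\vphi''$ by picking a scaling factor $\lambda_0$ (the paper's $a=(2/c)^{1/\tau}$) that doubles $\vphi'$, bounding $\vphi'(\lambda_0 x)-\vphi'(x)$ by $(\lambda_0-1)x\vphi''(x)$ via monotonicity of $\vphi''$, and then deducing the scaling of $\vphi''$ from \eqref{eq:63} by sandwiching. The only deviation is that for the reverse direction you invoke Proposition~\ref{prop:2} directly while the paper re-derives that implication through \eqref{eq:63}; this is a harmless shortcut (one small expository slip: the identity $\vphi'(\lambda_0 x)-\vphi'(x)=\int_x^{\lambda_0 x}\vphi''$ is just the fundamental theorem of calculus---the hypothesis $\vphi'(\theta_1)=0$ enters, as your final paragraph correctly says, to ensure $\vphi'\geq 0$ so that doubling yields $\vphi'(\lambda_0 x)-\vphi'(x)\geq\vphi'(x)$).
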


\begin{proof}
	Assume first that $\vphi'' \in \WLSC{\tau-1}{c'}{x_0}$. We claim that \eqref{eq:63} holds true. In view of Proposition \ref{prop:5}, it is enough to prove the first inequality. First, let $x>2\theta_1 \vee x_0$. By the weak scaling property of $\vphi''$,
	\begin{align*}
	\frac{\vphi'(x)-\vphi'(\theta_1 \vee x_0)}{x\vphi''(x)} = \frac1x \int_{\theta_1 \vee x_0}^x \frac{\vphi''(s)}{\vphi''(x)}\, d s \leq \frac{1}{c'x^{\tau}}\int_{\theta_1 \vee x_0}^x s^{\tau-1} \, d s \leq \frac{1}{c'\tau}.
	\end{align*}
	Thus, we get \eqref{eq:63} if $x_0 \leq \theta_1$. If this is not the case, observe that since $\vphi'' \in \WLSC{\tau-1}{c}{x_0}$ and $\tau > 0$, the function
	\[
	(x_0,\infty) \ni x \mapsto x\vphi''(x)
	\]
	is almost increasing. Thus, for $x>2x_0>0$,
	\[
	x\vphi''(x) \geq c 2x_0 \vphi''(2x_0).
	\]
	Since $\vphi''$ is continuous and positive on $[x_0,2x_0]$, we get that $x\vphi''(x) \gtrsim 1$ for all $x>x_0$, and \eqref{eq:63} follows. The scaling property of $\vphi'$ is now an immediate consequence.
	
	Now assume $\vphi' \in \WLSC{\tau}{c}{x_0}$. By monotonicity of $\vphi''$, for $0<b<a$,
	\[
	\frac{\vphi'(ax)-\vphi'(bx)}{\vphi'(x)} \leq \frac{x(a-b)\vphi''(bx)}{\vphi'(x)}.
	\]
	Put $b=1$. Then by the scaling property of $\vphi'$,
	\begin{align*}
	\frac{x(a-1)\vphi''(x)}{\vphi'(x)} \geq \frac{\vphi'(ax)}{\vphi'(x)}-1 \geq ca^{\tau}-1,
	\end{align*}
	for all $x>x_0$. Thus, for $a=2^{1/\tau}c^{-1/\tau}$ we obtain that $\vphi'(x) \lesssim x\vphi''(x)$ for all $x>x_0$, which, combined with Proposition \ref{prop:5}, yields \eqref{eq:63}, and the scaling property of $\vphi''$ follows. That completes the proof. 
\end{proof}

Combining Propositions \ref{prop:5},  and \ref{prop:3}, we immediately obtain the following corollary.
\begin{corollary}\label{cor:2}
	Let $\vphi$ be a Laplace exponent of a spectrally positive L\'{e}vy process of infinite variation such that $\vphi'(\theta_1)=0$. Then $\vphi \in \WLSC{\alpha}{c}{x_0 \vee \theta_0}$ for some $c \in (0,1]$, $x_0 \geq 0$ and $\alpha>1$ if and only if $\vphi'' \in \WLSC{\alpha-2}{c'}{x_0}$ for some $c' \in (0,1]$. Furthermore, if $\vphi \in \WLSC{\alpha}{c}{x_0 \vee \theta_0}$ then there is $C \geq 1$ such that for all $x>x_0 \vee 2\theta_0$
	\[
	C'^{-1}\vphi(x) \leq x^2\vphi''(x) \leq C'\vphi(x).
	\]
\end{corollary}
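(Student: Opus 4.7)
The plan is to bridge the scaling of $\vphi$ and that of $\vphi''$ through the intermediate scaling of $\vphi'$. The two ingredients are Proposition \ref{prop:5}, which tells us $\vphi(x) \approx x\vphi'(x)$ for $x > 2\theta_0$, and Proposition \ref{prop:3}, which is exactly the equivalence between weak lower scaling of $\vphi'$ and of $\vphi''$ together with the comparison $\vphi'(x) \approx x\vphi''(x)$.

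For the forward direction, assume $\vphi'' \in \WLSC{\alpha-2}{c'}{x_0}$. Since $\alpha - 1 > 0$, Proposition \ref{prop:3} applies and yields $\vphi' \in \WLSC{\alpha-1}{c_1}{x_0 \vee \theta_1}$ for some $c_1 \in (0,1]$. Multiplying by $x$ and invoking Proposition \ref{prop:5} on the range $x > x_0 \vee 2\theta_0$ converts this into a scaling bound on $\vphi$ itself, giving $\vphi \in \WLSC{\alpha}{c_2}{x_0 \vee \theta_0}$ after a small adjustment of the scaling area by continuity and positivity of $\vphi$ on any compact subinterval of $(\theta_0,\infty)$. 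For the converse, assume $\vphi \in \WLSC{\alpha}{c}{x_0 \vee \theta_0}$ with $\alpha > 1$. Again by Proposition \ref{prop:5}, $\vphi'(x) \approx \vphi(x)/x$ for $x > 2\theta_0$, so the scaling of $\vphi$ transfers immediately to $\vphi' \in \WLSC{\alpha-1}{\tilde c}{x_0 \vee \theta_1}$; then Proposition \ref{prop:3} (applicable since $\alpha - 1 > 0$ and $\vphi'(\theta_1) = 0$) delivers $\vphi'' \in \WLSC{\alpha-2}{c'}{x_0}$.

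For the \emph{furthermore} clause, once the equivalence is established, one simply chains the two comparability estimates: Proposition \ref{prop:5} gives $\vphi(x) \leq x\vphi'(x) \leq C\vphi(x)$ for $x > 2\theta_0$, and \eqref{eq:63} in Proposition \ref{prop:3} gives $C^{-1}\vphi'(x) \leq x\vphi''(x) \leq C\vphi'(x)$ for $x > x_0 \vee 2\theta_1$. Multiplying the second inequality by $x$ and combining with the first, on the common region $x > x_0 \vee 2\theta_0$ (noting $\theta_1 \leq \theta_0$), yields $C'^{-1}\vphi(x) \leq x^2 \vphi''(x) \leq C' \vphi(x)$ for an appropriate $C'$.

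The only delicate point I anticipate is bookkeeping for the scaling areas: Proposition \ref{prop:3} is stated on $(x_0 \vee \theta_1, \infty)$ while Proposition \ref{prop:5} requires $x > 2\theta_0$, and we need to land on $(x_0 \vee \theta_0, \infty)$ for the conclusion on $\vphi$. Since $\theta_1 \leq \theta_0$, these regions are mutually accessible, and any gap between $x_0 \vee \theta_0$ and $x_0 \vee 2\theta_0$ is absorbed by continuity and strict positivity of $\vphi$ there (at the cost of weakening the scaling constant). Beyond that technicality the corollary really is a formal combination of the two preceding propositions.
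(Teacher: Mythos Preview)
Your proposal is correct and matches the paper's approach exactly: the paper states only that the corollary follows ``Combining Propositions \ref{prop:5} and \ref{prop:3}'', and your argument is precisely the spelled-out version of that combination, including the bookkeeping on the scaling areas and the chaining of the two comparability estimates for the ``furthermore'' clause.
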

\begin{lemma}\label{lem:2}
	Suppose $\vphi'' \in \WLSC{\alpha-2}{c}{x_0}$ for some $c \in (0,1]$, $x_0 \geq 0$ and $\alpha>0$. There is a constant $C>0$ such that for all $x>x_0$,
	\[
	C\vphi''(x) \leq \sigma^2+ \int_{(0,1/x)}s^2 \,\nu(ds).
	\]
\end{lemma}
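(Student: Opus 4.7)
The plan is to obtain an upper bound on the tail $\int_{[1/x, \infty)} s^2 e^{-xs}\,\nu(ds)$ by a fraction strictly less than one of $\vphi''(x)$ itself, and then close the estimate by rearrangement. The comparison between $\vphi''(x)$ and $\vphi''(Mx)$ for a suitably large $M \geq 1$ drives the whole argument.

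Differentiating \eqref{eq:16} twice gives
\[
\vphi''(x) = 2\sigma^2 + \int_{(0, \infty)} s^2 e^{-xs}\,\nu(ds),
\]
and subtracting the analogous expression for $\vphi''(Mx)$ yields
\[
\vphi''(x) - \vphi''(Mx) = \int_{(0, \infty)} s^2 e^{-xs}\bigl( 1 - e^{-(M-1)xs} \bigr)\,\nu(ds).
\]
On $\{s \geq 1/x\}$ the bracket is bounded below by $1 - e^{-(M-1)}$, so dropping the nonnegative contribution from $(0, 1/x)$ I get
\[
\vphi''(x) - \vphi''(Mx) \geq \bigl( 1 - e^{-(M-1)} \bigr) \int_{[1/x, \infty)} s^2 e^{-xs}\,\nu(ds).
\]
At the same time, the scaling $\vphi'' \in \WLSC{\alpha-2}{c}{x_0}$ applied with $\mu = M$ and $\lambda = x > x_0$ gives $\vphi''(Mx) \geq c M^{\alpha-2} \vphi''(x)$, hence
\[
\vphi''(x) - \vphi''(Mx) \leq \bigl( 1 - c M^{\alpha-2} \bigr) \vphi''(x).
\]

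Combining pins down the tail:
\[
\int_{[1/x, \infty)} s^2 e^{-xs}\,\nu(ds) \leq \rho(M) \vphi''(x), \qquad \rho(M) := \frac{1 - c M^{\alpha-2}}{1 - e^{-(M-1)}}.
\]
Splitting $\vphi''(x)$ at $s = 1/x$ and bounding the head trivially by $\int_{(0, 1/x)} s^2\,\nu(ds)$ gives
\[
\vphi''(x) \leq 2\sigma^2 + \int_{(0, 1/x)} s^2\,\nu(ds) + \rho(M) \vphi''(x) \leq 2 \Bigl( \sigma^2 + \int_{(0, 1/x)} s^2\,\nu(ds) \Bigr) + \rho(M) \vphi''(x),
\]
which, once $\rho(M) < 1$, rearranges to the claim with $C = (1 - \rho(M))/2$.

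The main technical point is to secure $\rho(M) < 1$, that is $c M^{\alpha-2} e^{M-1} > 1$, uniformly in $x > x_0$ by a single choice of $M$. Since $\alpha > 0$, the exponential $e^{M-1}$ ultimately dominates the polynomial $M^{\alpha-2}$ regardless of the sign of $\alpha - 2$, so some $M = M(c, \alpha) \geq 1$ does the job. This is the one place where both the scaling hypothesis and the positivity of $\alpha$ are used; everything else is routine manipulation of the Laplace-transform representation of $\vphi''$.
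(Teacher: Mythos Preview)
Your proof is correct and takes a genuinely different route from the paper. The paper introduces $f(t)=\int_{(0,t)}s^{2}\,\nu(ds)$, observes the Laplace-transform identity $\vphi''(x)=\int_{0}^{\infty}e^{-s}f(s/x)\,ds$, and then splits this integral at a large level $u$: the front piece is dominated by $f(u/x)$, while the tail is controlled via the pointwise bound $f(s/x)\leq e^{s/2}\vphi''(x/2)$ and the scaling of $\vphi''$; setting $x=\lambda u$ and choosing $u$ large closes the estimate. By contrast, you bypass the Laplace-transform viewpoint entirely and compare $\vphi''(x)$ with $\vphi''(Mx)$ directly: the difference absorbs the tail $\int_{[1/x,\infty)}s^{2}e^{-xs}\,\nu(ds)$ up to a factor $1-e^{-(M-1)}$, while scaling bounds the same difference above by $(1-cM^{\alpha-2})\vphi''(x)$; choosing $M$ so that $cM^{\alpha-2}e^{M-1}>1$ makes the tail a strict fraction of $\vphi''(x)$, and rearrangement finishes. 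Your argument is shorter and more transparent---essentially a one-step ``peeling'' of the tail---whereas the paper's approach, though slightly longer, exhibits the connection between $\vphi''$ and the Laplace transform of the truncated second moment, which is of independent interest (it immediately gives Corollary~\ref{cor:4}, for instance). Both rely on the same ingredients: the lower scaling of $\vphi''$ and the positivity of $\alpha$ to beat an exponential against a power.
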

\begin{proof}
	First assume $\sigma=0$; the extension to any $\sigma$ is immediate. Let $f \colon (0,\infty) \mapsto \RR$ be a function defined as
	\[
	f(t) = \int_{(0,t)} s^2\, \nu(ds).
	\]
	Observe that by the Fubini-Tonelli theorem, for $x>0$ we have
	\begin{align*}
	\calL f(x) &= \int_0^{\infty}e^{-xt} \int_{(0,t)}s^2\,\nu(ds) \, d t = \int_{(0,\infty)}s^2 \int_s^{\infty} e^{-xt} \, d t \,\nu(ds) = x^{-1}\vphi''(x).
	\end{align*}
	Since $f$ is non-decreasing, for any $s>0$,
	\begin{align*}
	\vphi''(x) =x\calL f(x) \geq \int_s^{\infty} e^{-t}f(t/x) \, d t \geq e^{-s} f(s/x).
	\end{align*}
	Hence, for any $u >2$,
	\begin{align*}
	\vphi''(x) &= \int_0^u e^{-s}f(x/s)\, d s + \int_u^{\infty} e^{-s}f(x/s)\, d s \leq f(u/x) + \int_u^{\infty} e^{-s/2} \vphi''(x/2) \, d s.
	\end{align*}
	Therefore, setting $x=\lambda u >2x_0$, by the weak scaling property of $\vphi''$,
	\begin{align*}
	f(1/\lambda) &\geq \vphi''(u\lambda) - 2e^{-u/2}\vphi''(u\lambda/2) \geq \big( 2^{\alpha-2}c-2e^{-u/2} \big)\vphi''(u\lambda/2).
	\end{align*}
	At this stage, we select $u>2$ such that
	\[
	2^{\alpha-2}c-2e^{-u/2} \geq 2^{-2}c.
	\]
	Then again, by the weak scaling property of $\vphi''$, for $\lambda>x_0$,
	\[
	f(1/\lambda) \geq c2^{-2}\vphi''(u\lambda/2) \geq (c/u)^2\vphi''(\lambda),
	\]
	which ends the proof.
\end{proof}
Since
\[
K(1/x) \leq ex^2\vphi''(x),
\]
by Lemma \ref{lem:2} we immediately obtain the following corollary.
\begin{corollary}\label{cor:4}
	Suppose $\vphi'' \in \WLSC{\alpha-2}{c}{x_0}$ for some $c \in (0,1]$, $x_0 \geq 0$ and $\alpha >0$. Then there is $C \geq 1$ such that for all $x>x_0$,
	\[
	Cx^2\vphi''(x) \leq K(1/x) \leq ex^2\vphi''(x).
	\]
\end{corollary}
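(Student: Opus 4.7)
The proof should be essentially a one-liner: the sentence immediately preceding the corollary already tells us that $K(1/x)\leq ex^2\vphi''(x)$, so the upper estimate on $K(1/x)$ is just a pointwise observation, and the lower estimate is Lemma \ref{lem:2} multiplied by $x^2$. My plan is therefore to separate the two inequalities and verify each in turn.

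For the inequality $K(1/x)\leq ex^2\vphi''(x)$, which does \emph{not} use the scaling assumption, I would differentiate \eqref{eq:16} twice to obtain
\[
\vphi''(x) = 2\sigma^2 + \int_{(0,\infty)} s^2 e^{-xs}\,\nu(ds).
\]
Restricting the integral to $s\in(0,1/x)$ and using $e^{-xs}\geq e^{-1}$ on that set (together with $2\geq e^{-1}$ to absorb the $\sigma^2$-term) gives
\[
\vphi''(x)\geq e^{-1}\!\left(\sigma^2+\int_{(0,1/x)} s^2\,\nu(ds)\right) = e^{-1}x^{-2}K(1/x),
\]
which is the desired upper bound on $K(1/x)$.

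For the converse direction $Cx^2\vphi''(x)\leq K(1/x)$, I would invoke Lemma \ref{lem:2} directly: under $\vphi''\in\WLSC{\alpha-2}{c}{x_0}$ it yields a constant $C>0$ such that for all $x>x_0$,
\[
C\vphi''(x)\leq \sigma^2+\int_{(0,1/x)} s^2\,\nu(ds).
\]
Multiplying by $x^2$ and recognising the right-hand side as $K(1/x)$ completes the argument.

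There is no real obstacle here; the content of the corollary has already been carried out in Lemma \ref{lem:2}. The only mild subtlety is the cosmetic handling of the $2\sigma^2$ versus $\sigma^2$ discrepancy in the first step, which is immediately absorbed by the factor $e$ in the stated bound.
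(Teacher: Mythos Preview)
Your proposal is correct and matches the paper's own argument essentially verbatim: the paper records the inequality $K(1/x)\leq ex^2\vphi''(x)$ just before the corollary and then states that the remaining bound follows immediately from Lemma~\ref{lem:2}. Your spelled-out verification of both inequalities, including the handling of the $2\sigma^2$ term, is exactly the computation the paper is summarising.
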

Before embarking on our main results let us prove one key Lemma which provides control on the real part of the holomorphic extension of the Laplace exponent.  
\begin{lemma}\label{lem:1}
	Suppose that $\vphi'' \in \WLSC{\alpha-2}{c}{x_0}$ for some $c \in (0,1]$, $x_0 \geq 0$ and $\alpha >0$. Then there exists $C>0$ such that for all $w>x_0$ and $\lambda \in \RR$,
	\[
	\Re \big( \vphi(w)-\vphi(w+i\lambda) \big) \geq C\lambda^2\big( \vphi''(|\lambda| \vee w) \big).
	\]
\end{lemma}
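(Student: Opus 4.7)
The plan is to exploit the L\'{e}vy--Khintchine representation of $\vphi$ and reduce the estimate to the concentration bound of Lemma~\ref{lem:2}. First, by extending \eqref{eq:16} holomorphically to $\{\Re z>0\}$ and computing, one obtains
\[
\Re\bigl(\vphi(w)-\vphi(w+i\lambda)\bigr) = \sigma^2\lambda^2 + \int_{(0,\infty)} e^{-wx}\bigl(1-\cos(\lambda x)\bigr)\,\nu(dx),
\]
since the drift $-b\lambda$ and the compensator $\lambda x \ind{x<1}$ contribute only purely imaginary terms after the subtraction. This already shows that the left-hand side is non-negative; what remains is to bound it from below by $\lambda^2 \vphi''(|\lambda|\vee w)$.

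The main idea is then to restrict the integration to $(0, 1/\rho)$, where $\rho := |\lambda|\vee w$. On this interval both $wx\leq 1$ and $|\lambda x|\leq 1$ hold simultaneously, so $e^{-wx}\geq e^{-1}$, and the elementary inequality $1-\cos y\geq c_0 y^2$, valid for $|y|\leq 1$, yields
\[
\int_{(0,\infty)} e^{-wx}\bigl(1-\cos(\lambda x)\bigr)\,\nu(dx) \geq \frac{c_0}{e}\lambda^2 \int_{(0,1/\rho)} x^2\,\nu(dx).
\]
Combining this with the Gaussian contribution $\sigma^2\lambda^2$ produces a lower bound proportional to $\lambda^2\bigl(\sigma^2 + \int_{(0,1/\rho)} x^2\,\nu(dx)\bigr)$.

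Finally, since $w>x_0$ implies $\rho>x_0$, Lemma~\ref{lem:2} applies and furnishes
\[
\sigma^2 + \int_{(0,1/\rho)} x^2\,\nu(dx) \geq C\vphi''(\rho) = C\vphi''(|\lambda|\vee w),
\]
which closes the argument. The sole place where the scaling hypothesis $\vphi''\in\WLSC{\alpha-2}{c}{x_0}$ enters is through Lemma~\ref{lem:2}; everything else is direct calculation. I do not anticipate any serious obstacle: the main insight is just to choose the cut-off $1/\rho$ so that both $e^{-wx}$ and $1-\cos(\lambda x)$ can be controlled at the same time, and the case $\lambda=0$ is trivial.
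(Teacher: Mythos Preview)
Your proof is correct and follows essentially the same route as the paper: compute the real part via the integral representation, restrict to an interval where both $e^{-wx}$ and $1-\cos(\lambda x)$ are controlled, and invoke Lemma~\ref{lem:2}. The only cosmetic difference is that the paper splits into the cases $|\lambda|\geq w$ and $|\lambda|<w$ (cutting first at $1/|\lambda|$ and then further at $1/w$ in the second case), whereas you go directly to the unified cutoff $1/\rho = 1/(|\lambda|\vee w)$---a minor streamlining of the same argument.
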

\begin{proof}
	By the integral representation \eqref{eq:16}, for $\lambda \in \RR$ we have
	\[
	\Re \big( \vphi(w)-\vphi(w+i\lambda) \big) = \sigma^2\lambda^2+ \int_{(0,\infty)} \big( 1-\cos \lambda s \big) e^{-ws}\,\nu(ds).
	\]
	In particular, we see that the expression above is symmetric in $\lambda$. Thus, it is sufficient to consider $\lambda > 0$. Moreover, we infer that
	\begin{align}
	\Re \big( \vphi(w)-\vphi(w+i\lambda) \big)  \gtrsim \lambda^2 \bigg(\sigma^2+ \int_{(0,1/\lambda)} s^2 e^{-ws} \,\nu(d s)\bigg). \label{eq:4}
	\end{align}
	Due to Lemma \ref{lem:2} we obtain, for $\lambda \geq w$,
	\[
	\Re \big( \vphi(w)-\vphi(w+i\lambda) \big) \gtrsim \lambda^2 \bigg(\sigma^2+\int_{(0,1/\lambda)} s^2 \,\nu(ds)\bigg) \gtrsim \lambda^2\vphi''(\lambda).
	\]
	If $w>\lambda>0$, then by \eqref{eq:4},
	\begin{align*}
	\Re \big( \vphi(w)-\vphi(w+i\lambda) \big) &\gtrsim \lambda^2 \bigg(\sigma^2+\int_{(0,1/w)}s^2e^{-ws}\,\nu(ds)\bigg) \geq e^{-1}\lambda^2\bigg(\sigma^2+\int_{(0,1/w)} s^2\,\nu(ds)\bigg),
	\end{align*}
	which together with Lemma \ref{lem:2} ends the proof.
\end{proof}

\section{Asymptotics}\label{sec:asymptotics}

\begin{theorem}\label{thm:1}
	Let $\bfX$ be a spectrally positive L\'{e}vy processes of unbounded variation with the Laplace exponent $\vphi$. Suppose that $\vphi'' \in \WLSC{\alpha-2}{c}{x_0}$ for some $c \in (0,1]$, $x_0 \geq 0$ and $\alpha > 0$. Then the probability distribution of $X_t$ is absolutely continuous for all $t>0$. If we denote its density by $p(t,\cdot)$, then for each $\epsilon>0$ there is $M_0>0$ such that
	\[
	\bigg\lvert p \big( t,-t\vphi'(w) \big) \sqrt{ 2\pi t\vphi''(w)} \exp \Big\lbrace t \big( w\vphi'(w)-\vphi(w) \big) \Big\rbrace -1 \bigg\rvert \leq \epsilon,
	\]
	provided that $w>x_0$ and $tw^2\vphi''(w)>M_0$.
\end{theorem}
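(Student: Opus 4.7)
The approach is to invert the Laplace transform along the contour $\Re z = w$ and then carry out the saddle point (Laplace) method; the key observation is that $w$ is the saddle of $z \mapsto zx + t\vphi(z)$ precisely when $x = -t\vphi'(w)$. Holomorphic extension of \eqref{eq:15} yields $\EE[e^{-(w+i\xi)X_t}] = e^{t\vphi(w+i\xi)}$ for $w > 0$, and by Lemma \ref{lem:1} together with $\vphi''\in\WLSC{\alpha-2}{c}{x_0}$ this is integrable in $\xi$: for $|\xi|\leq w$ one has $\Re(\vphi(w)-\vphi(w+i\xi)) \geq C\xi^2\vphi''(w)$, while for $|\xi|>w$ weak lower scaling of $\vphi''$ gives $\xi^2\vphi''(|\xi|)\gtrsim |\xi|^{\alpha}$. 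Fourier inversion applied to the tilted finite measure $e^{-wx}\mu_t(dx)$ (where $\mu_t$ is the law of $X_t$) then produces a continuous density $p(t,\cdot)$ satisfying
\[
p(t,x) = \frac{e^{wx}}{2\pi}\int_{\RR} e^{i\xi x}\,e^{t\vphi(w+i\xi)}\,d\xi, \qquad x \in \RR, \ w > x_0,
\]
the formula being independent of $w$ by a standard contour-shift/dominated convergence argument.

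Substituting $x = -t\vphi'(w)$ and factoring out $e^{t(\vphi(w)-w\vphi'(w))}$ reduces the claim to showing
\[
\sqrt{\frac{t\vphi''(w)}{2\pi}}\int_{\RR} e^{t\Phi(\xi)}\,d\xi \longrightarrow 1, \qquad \Phi(\xi) := \vphi(w+i\xi)-\vphi(w)-i\xi\vphi'(w),
\]
as $M := tw^2\vphi''(w) \to \infty$. I would choose $\delta = wM^{-2/5}$, which makes $t\delta^2\vphi''(w) = M^{1/5}\to\infty$ (so the central Gaussian captures the full mass) while $t\delta^3\vphi''(w)/w = M^{-1/5}\to 0$ (so the cubic Taylor remainder is negligible). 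On $|\xi|\leq\delta$, the third-order Taylor expansion $\Phi(\xi) = -\tfrac12\xi^2\vphi''(w) + R(\xi)$ with $|R(\xi)| \leq \tfrac16|\xi|^3|\vphi'''(w)|$ (using $|\vphi'''(w+i\eta)|\leq|\vphi'''(w)|$ obtained by differentiating \eqref{eq:16} under the integral sign) combined with the scaling consequence $|\vphi'''(w)|\lesssim \vphi''(w)/w$ (which follows from $se^{-ws/2}\leq 2/(ew)$ together with the doubling of $\vphi''$) yields $|tR(\xi)|\lesssim M^{-1/5}$ uniformly for $|\xi|\leq\delta$. Hence after the substitution $\eta=\xi\sqrt{t\vphi''(w)}$ the central integral equals $\sqrt{2\pi/(t\vphi''(w))}\,(1+O(M^{-1/5}))$, which contributes the advertised leading factor $1/\sqrt{2\pi t\vphi''(w)}$.

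For the tail, Lemma \ref{lem:1} gives $|e^{t\Phi(\xi)}|\leq e^{-Ct\xi^2\vphi''(|\xi|\vee w)}$. On $\delta<|\xi|\leq w$ this is at most $e^{-CM^{1/5}}$, so the contribution is $O(w\,e^{-CM^{1/5}})$, which is $o(1/\sqrt{t\vphi''(w)})$ as $M\to\infty$. On $|\xi|>w$, scaling gives $\xi^2\vphi''(|\xi|)\geq cw^2\vphi''(w)(|\xi|/w)^{\alpha}$, so the integrand decays like $\exp(-cM(|\xi|/w)^{\alpha})$, producing a tail bound of order $wM^{-1/\alpha}e^{-cM}$, again negligible relative to $1/\sqrt{t\vphi''(w)} = w/\sqrt{M}$. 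The main obstacle is the scaling-driven inequality $|\vphi'''(w)|\lesssim \vphi''(w)/w$ for $w > x_0$: without this improvement over the naive $|\vphi'''|\leq|\vphi'''(w)|$, the cubic Taylor remainder cannot be made subdominant, and it is this estimate together with Lemma \ref{lem:1} that forces the calibration $\delta/w \sim M^{-a}$ with $a\in(1/3,1/2)$, which alone balances the quadratic-versus-cubic regime on the central piece against the exponential tail decay required in both intermediate and far-tail ranges.
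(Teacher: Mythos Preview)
Your proof is correct and follows essentially the same saddle-point strategy as the paper: invert along $\Re z=w$, use Lemma~\ref{lem:1} together with the scaling of $\vphi''$ to control the tails, and use the doubling-based bound $|\vphi'''(w)|\lesssim \vphi''(w)/w$ to handle the cubic Taylor remainder near the saddle. The only differences are cosmetic---the paper rescales to the variable $u=\xi\sqrt{t\vphi''(w)}$ and splits at $|u|=M_0^{1/4}$ (equivalently $|\xi|=wM_0^{-1/4}$, tracking the $|u|$-dependence of the remainder rather than bounding it uniformly), whereas you split at $|\xi|=wM^{-2/5}$; both choices work and the key estimates are identical.
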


\begin{remark}\label{rem:1} 
Suppose $\sigma=0$. It is known (see e.g. \cite[Lemma 2.9]{GS2019}) that the scaling property with the scaling index $\alpha \geq 1$ implies unbounded variation. Theorem \ref{thm:1}, however, holds in greater generality. Namely, there are processes of unbounded variation which satisfy scaling condition with $\alpha$ strictly smaller than $1$. Indeed, proceeding exactly as in the \cite[Remark 4.12]{GLT2018} one may construct a L\'{e}vy measure which satisfies \eqref{eq:34} and whose corresponding second derivative of the Laplace exponent has lower and upper Matuszewska indices of order $-\frac32$ and $-\frac12$, respectively. Thus, the lower scaling condition for $\vphi''$ holds only for $\alpha<1$. We also note that the Gaussian component is not excluded.
\end{remark}
\begin{proof}
	Let $x=-t\vphi'(w)$ and $M>0$. We first show that
	\begin{equation}\label{eq:1}
	p(t,x) = \frac{1}{2\pi} \cdot \frac{e^{-\Theta(x/t,0)}}{\sqrt{t\vphi''(w)}}\int_{\RR} \exp \Bigg\lbrace -t \Bigg( \Theta \bigg( \frac{x}{t},\frac{u}{\sqrt{t\vphi''(w)}} \bigg) - \Theta \bigg( \frac{x}{t},0 \bigg) \Bigg) \Bigg\rbrace \, d u,
	\end{equation}
	provided that $w>x_0$ and $tw^2\vphi''(w)>M$, where for $\lambda>0$ we have set
	\begin{equation}\label{eq:2}
	\Theta(x/t,\lambda) = - \big( \vphi(w+i\lambda)+\frac{x}{t}(w+i\lambda) \big).
	\end{equation}
	To this end, we recall that, by the Mellin's inversion formula, if the limit
	\begin{equation}\label{eq:3}
	\lim_{L \to \infty} \frac{1}{2\pi i} \int_{w-iL}^{w+iL} e^{t\vphi(\lambda)+\lambda x} \, d \lambda \quad \text{exists},
	\end{equation}
	then the probability distribution of $X_t$ has a density $p(t,\cdot)$ and
	\[
	p(t,x) = \lim_{L \to \infty} \frac{1}{2\pi i} \int_{w-iL}^{w+iL} e^{t\vphi(\lambda)+\lambda x} \, d \lambda.
	\]
	By the change of variables we obtain
	\begin{align*}
	\frac{1}{2\pi i} \int_{w-iL}^{w+iL} e^{t\vphi(\lambda)+\lambda x}\,d\lambda &= \frac{1}{2\pi}\int_{-L}^L e^{-t\Theta(x/t,\lambda)} \, d \lambda \\ &= \frac{e^{-t\Theta(x/t,0)}}{2\pi} \int_{-L}^L \exp\Big\lbrace-t \big( \Theta(x/t,\lambda) -  \Theta(x/t,0) \big)\Big\rbrace \, d \lambda \\ &= \frac{e^{-t\Theta(x/t,0)}}{2\pi\sqrt{t\vphi''(w)}} \int_{-L\sqrt{t\vphi''(w)}}^{L\sqrt{t\vphi''(w)}} \exp \Bigg\lbrace -t \Bigg( \Theta \bigg( \frac{x}{t},\frac{u}{\sqrt{t\vphi''(w)}} \bigg) - \Theta \bigg( \frac{x}{t},0 \bigg) \Bigg) \Bigg\rbrace \, d u.
	\end{align*}
	Next, we observe that there is $C>0$, not depending on $M$, such that for all $u \in \RR$
	\begin{equation}\label{eq:6}
	t\Re \Bigg( \Theta \bigg( \frac{x}{t},\frac{u}{\sqrt{t\vphi''(w)}} \bigg) - \Theta \bigg( \frac{x}{t},0 \bigg) \Bigg) \geq C \Big( u^2 \wedge \big(|u|^{\alpha}M^{1-\alpha/2}\big) \Big), 
	\end{equation}
	provided that $w>x_0$ and $tw^2\vphi''(w)>M$. Indeed, by \eqref{eq:2} and Lemma \ref{lem:1}, for $w>x_0$ we get
	\begin{equation*}
	t\Re \Bigg( \Theta \bigg( \frac{x}{t},\frac{u}{\sqrt{t\vphi''(w)}} \bigg) - \Theta \bigg( \frac{x}{t},0 \bigg) \Bigg) \gtrsim \frac{|u|^2}{\vphi''(w)} \vphi'' \bigg( \frac{|u|}{\sqrt{t\vphi''(w)}} \vee w \bigg),
	\end{equation*}
	and \eqref{eq:6} follows by scaling property of $\vphi''$.
	Hence, \eqref{eq:3} follows from the dominated convergence theorem. Consequently, Mellin's inversion formula yields \eqref{eq:1}.

	Next, we prove that for each $\epsilon>0$ there is $M_0>0$ such that
	\begin{equation}\label{eq:7}
	\Bigg\lvert \int_{\RR} \exp \Bigg\lbrace -t \Bigg( \Theta \bigg( \frac{x}{t},\frac{u}{\sqrt{t\vphi''(w)}} \bigg) - \Theta \bigg( \frac{x}{t},0 \bigg) \Bigg) \Bigg\rbrace \, d u - \int_{\RR}e^{-\frac12 u^2}\, d u \Bigg\rvert \leq \epsilon,
	\end{equation}
	provided that $w>x_0$ and $tw^2\vphi''(w)>M_0$. In view of \eqref{eq:6}, by taking $M_0>1$ sufficiently large, we get
	\begin{align}\label{eq:8}
	\Bigg\lvert \int_{|u|\geq M_0^{1/4}} \exp \Bigg\lbrace -t \Bigg( \Theta \bigg( \frac{x}{t},\frac{u}{\sqrt{t\vphi''(w)}} \bigg) - \Theta \bigg( \frac{x}{t},0 \bigg) \Bigg) \Bigg\rbrace \, d u \Bigg\rvert \leq \int_{|u|\geq M_0^{1/4}} e^{-C|u|^{\alpha}}\, d u \leq \epsilon,
	\end{align}
	and
	\begin{align}\label{eq:9}
	\int_{|u|\geq M_0^{1/4}}e^{-\frac12u^2}\, d u \leq \epsilon.
	\end{align}
	Next, let us observe that there is $C>0$ such that
	\begin{equation}\label{eq:10}
	\Bigg\lvert t\Bigg( \Theta \bigg( \frac{x}{t},\frac{u}{\sqrt{t\vphi''(w)}} \bigg) - \Theta \bigg( \frac{x}{t}, 0 \bigg) \Bigg) - \frac12 |u|^2 \Bigg\rvert \leq C |u|^3M_0^{-\frac12}.
	\end{equation}
	Indeed, since
	$\partial_{\lambda}\Theta( x/t,0 )=0$,
	by Taylor's formula we get
	\begin{align}
	\nonumber \Bigg\lvert t\Bigg( \Theta \bigg( \frac{x}{t},\frac{u}{\sqrt{t\vphi''(w)}} \bigg) - \Theta \bigg( \frac{x}{t}, 0 \bigg) \Bigg) - \frac12 |u|^2 \Bigg\rvert &= \Bigg\lvert \frac12 \partial_{\lambda}^2\Theta \bigg( \frac{x}{t},\xi \bigg) \frac{|u|^2}{\vphi''(w)}-\frac12|u|^2 \Bigg\rvert \\ &= \frac{|u|^2}{2\vphi''(w)} \big\lvert \vphi''(w)-\vphi''(w+i\xi) \big\rvert, \label{eq:11}
	\end{align}
	where $\xi$ is some number satisfying
	$|\xi| \leq \frac{|u|}{\sqrt{t\vphi''(w)}}$.
	We also have
	\begin{align*}
	\big\lvert \vphi''(w)-\vphi''(w+i\xi) \big\rvert &\leq \int_{(0,\infty)}s^2e^{-ws}\big\lvert e^{-i\xi s}-1 \big\rvert \,\nu(ds) \leq 2|\xi| \int_{(0,\infty)}s^3e^{-ws}\,\nu(ds) = 2|\xi|\big( -\vphi'''(w) \big).
	\end{align*}
	Since $\vphi''$ is doubling, by \cite[Proposition 2.1]{GLT2018}, for $w>x_0$,
	\[
	\vphi''(w) \gtrsim w \big( -\vphi'''(w) \big),
	\]
	which together with the estimate on $|\xi|$ yield
	\begin{align}
	\nonumber \big\lvert \vphi''(w)-\vphi''(w+i\xi) \big\rvert &\leq C \frac{|u|}{\sqrt{t\vphi''(w)}} \cdot \frac{\vphi''(w)}{w} \leq CM_0^{-\frac12}|u|\vphi''(w), \label{eq:13}
	\end{align}
	if only $tw^2\vphi''(w)>M_0$, proving \eqref{eq:10} through \eqref{eq:11}.
	Finally, since for any $z \in \CC$,
	\[
	\big\lvert e^z-1 \big\rvert \leq |z|e^{|z|},
	\]
	\eqref{eq:10} implies
	\begin{align*}
	\Bigg\lvert \int_{|u| < M_0^{1/4}} \exp \Bigg\lbrace -t \Bigg( \Theta \bigg( \frac{x}{t},\frac{u}{\sqrt{t\vphi''(w)}} \bigg) - \Theta \bigg( \frac{x}{t},0 \bigg) \Bigg) \Bigg\rbrace \, d u - \int_{|u|< M_0^{1/4}}e^{-\frac12 u^2}\, d u \Bigg\rvert \\ \leq CM_0^{-\frac12} \int_{|u|<M_0^{1/4}} \exp \bigg\lbrace -\frac12|u|^2+CM_0^{-\frac12}|u|^3 \bigg\rbrace |u|^3 \, d u \leq \epsilon,
	\end{align*}
	provided that $M_0$ is sufficiently large, which together with \eqref{eq:8} and \eqref{eq:9} complete the proof of \eqref{eq:7} and the theorem follows.
\end{proof}
\begin{remark}\label{rem:3}
	If $x_0=0$ then the constant $M_0$ in Theorem \ref{thm:1} depends only on $\alpha$ and $c$. If $x_0>0$ then it also depends on
	\[
	\sup_{x \in [x_0,2x_0]}\frac{x\big(-\vphi'''(x)\big)}{\vphi''(x)}.
	\]
\end{remark}
\begin{corollary}\label{cor:1}
	Suppose that $\vphi'' \in \WLSC{\alpha-2}{c}{x_0}$ for some $c \in (0,1]$, $x_0 \geq 0$ and $\alpha > 0$. Then there is $M_0>0$ such that
	\[
	p(t,x) \approx \frac{1}{\sqrt{t\vphi''(w)}} \exp \Big\lbrace -t\big( w\vphi'(w)-\vphi(w) \big) \Big\rbrace,
	\]
	uniformly on the set
	\[
	\big\lbrace (t,x) \in \RR_+ \times \RR \colon x<-t\vphi'(x_0) \text{ and } tw^2\vphi''(w)>M_0 \big\rbrace,
	\]
	where $w=(\vphi')^{-1}(-x/t)$.
\end{corollary}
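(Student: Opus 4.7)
The corollary is essentially a direct packaging of Theorem \ref{thm:1}, so the plan is to invoke the theorem with a fixed $\epsilon \in (0,1)$, say $\epsilon = 1/2$, and verify that the substitution $x \leftrightarrow w = (\vphi')^{-1}(-x/t)$ identifies the hypothesis set of the theorem with the set on which the corollary is claimed.

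First I would check that the inverse function is well defined on the right range. Since $\vphi'' \in \WLSC{\alpha-2}{c}{x_0}$ forces $\vphi'' > 0$ on $(x_0,\infty)$, the function $\vphi'$ is strictly increasing on $[x_0,\infty)$; moreover $\vphi'(x) \to \infty$ as $x \to \infty$ (by Proposition \ref{prop:2} or by differentiating \eqref{eq:16}). Hence $\vphi'$ is a bijection from $[x_0,\infty)$ onto $[\vphi'(x_0),\infty)$, and the restriction $(\vphi')^{-1}$ is well defined on $[\vphi'(x_0),\infty)$. The condition $x < -t\vphi'(x_0)$ is exactly $-x/t > \vphi'(x_0)$, which guarantees that $w := (\vphi')^{-1}(-x/t)$ is defined and satisfies $w > x_0$; in particular $x = -t\vphi'(w)$.

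Next I would apply Theorem \ref{thm:1} with $\epsilon = 1/2$ to produce an $M_0 > 0$ (depending only on $\alpha$, $c$, and possibly $x_0$ and $\vphi$ as in Remark \ref{rem:3}) such that
\[
\tfrac{1}{2} \leq p\bigl(t,-t\vphi'(w)\bigr)\,\sqrt{2\pi t \vphi''(w)}\,\exp\!\bigl\{t\bigl(w\vphi'(w)-\vphi(w)\bigr)\bigr\} \leq \tfrac{3}{2}
\]
whenever $w > x_0$ and $tw^2\vphi''(w) > M_0$. For any $(t,x)$ in the set described in the corollary, both conditions are satisfied by the previous step, so the displayed double inequality holds; substituting $x = -t\vphi'(w)$ and solving for $p(t,x)$ yields the claimed two-sided bound
\[
p(t,x) \approx \frac{1}{\sqrt{t\vphi''(w)}} \exp\!\bigl\{-t\bigl(w\vphi'(w)-\vphi(w)\bigr)\bigr\}
\]
with implied constants $2/3$ and $2\sqrt{2\pi}^{-1} \cdot 3/2$ (i.e., absolute).

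There is no real obstacle: the only point worth stressing is the uniformity statement, which is automatic because the $M_0$ furnished by Theorem \ref{thm:1} does not depend on the particular pair $(t,x)$, only on $\alpha$, $c$ and $\vphi$ through the quantities recorded in Remark \ref{rem:3}. Consequently the $\approx$-constants are absolute on the entire admissible set, which is exactly the uniform estimate asserted by the corollary.
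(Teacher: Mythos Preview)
Your proposal is correct and matches the paper's intent: the corollary is stated without proof immediately after Theorem \ref{thm:1}, precisely because it is just the theorem applied with a fixed $\epsilon$ together with the observation that $x<-t\vphi'(x_0)$ is equivalent to $w=(\vphi')^{-1}(-x/t)>x_0$. The only (inconsequential) slip is in the explicit constants you quote at the end---dividing the inequalities through by $\sqrt{2\pi t\vphi''(w)}\,e^{t(w\vphi'(w)-\vphi(w))}$ gives comparability constants $\tfrac{1}{2\sqrt{2\pi}}$ and $\tfrac{3}{2\sqrt{2\pi}}$, not $2/3$.
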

\begin{corollary}\label{cor:6}
	Suppose that $\vphi'' \in \WLSC{\alpha-2}{c}{x_0}$ for some $c \in (0,1]$, $x_0 \geq 0$ and $\alpha > 1$. Assume also that $\vphi'(\theta_1)=0$. Then there is $M >0$ such that
	\[
	p(t,x) \approx \frac{1}{\sqrt{t\vphi''(w)}} \exp \Big\lbrace -t\big( w\vphi'(w)-\vphi(w) \big) \Big\rbrace,
	\]
	uniformly on the set
	\begin{equation}\label{eq:24}
	\big\lbrace (t,x) \in \RR_+ \times \RR \colon  -x\vphi^{-1}(1/t) > M \text{ and } 0 \leq t\vphi(x_0 \vee 2 \theta_0) \leq 1 \big\rbrace,
	\end{equation}
	where $w=(\vphi')^{-1}(-x/t)$.
\end{corollary}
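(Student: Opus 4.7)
The plan is to reduce Corollary~\ref{cor:6} to Corollary~\ref{cor:1} by showing that, for $M$ large enough, the set described in \eqref{eq:24} is contained in the uniformity region
\[
\bigl\{(t,x)\in\RR_+\times\RR : x < -t\vphi'(x_0) \text{ and } tw^2\vphi''(w) > M_0\bigr\}
\]
from Corollary~\ref{cor:1}, where $M_0$ is the constant therein. The guiding idea is that $s:=\vphi^{-1}(1/t)$ is the natural scale attached to time $t$, and the hypothesis $-xs>M$ should force the saddle point $w=(\vphi')^{-1}(-x/t)$ to lie above $s$, which in turn makes $tw^2\vphi''(w)$ comparable to $-xw \geq -xs > M$.

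Under the present assumptions, Corollary~\ref{cor:2} applies and gives $\vphi\in\WLSC{\alpha}{c}{x_0\vee\theta_0}$ together with $\vphi(x)\approx x^2\vphi''(x)$ for $x>x_0\vee 2\theta_0$, while Proposition~\ref{prop:5} yields $\vphi(x)\approx x\vphi'(x)$ on the same range. The hypothesis $t\vphi(x_0\vee 2\theta_0)\leq 1$ rewrites as $\vphi(x_0\vee 2\theta_0)\leq 1/t=\vphi(s)$, so strict monotonicity of $\vphi$ beyond $\theta_1$ implies $s\geq x_0\vee 2\theta_0$. In particular, both comparisons above hold at $s$ and combine to $\vphi'(s)\approx 1/(ts)$.

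The key step is to show that $-xs>M$ forces $w>s$ once $M$ is sufficiently large. Since $M>0$ and $s>0$ we already have $-x>0$. Assume for contradiction $w\leq s$; monotonicity of $\vphi'$ together with $\vphi'(s)\approx 1/(ts)$ yields
\[
-\frac{x}{t}=\vphi'(w)\leq\vphi'(s)\leq\frac{c_1}{ts},
\]
whence $-xs\leq c_1$. Hence any $M>c_1$ forces $w>s\geq x_0\vee 2\theta_0$. In particular $w>x_0$, and monotonicity of $\vphi'$ gives $x=-t\vphi'(w)<-t\vphi'(x_0)$, verifying the first defining inequality of the uniformity region.

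Finally, since $w>x_0\vee 2\theta_0$, the comparison $\vphi(w)\approx w\vphi'(w)\approx w^2\vphi''(w)$ applies at $w$. Using $w\geq s$ and $-x>0$ one computes
\[
tw^2\vphi''(w)\approx t\vphi(w)\approx tw\vphi'(w)=-xw\geq -xs>M.
\]
Enlarging $M$ so that the lower bound supplied by the chain above exceeds $M_0$ completes the inclusion, and Corollary~\ref{cor:1} yields the desired asymptotics. The main obstacle I anticipate is the comparison step forcing $w>s$; once the scale $\vphi^{-1}(1/t)$ is identified as the correct threshold, the rest is a routine application of the scaling estimates gathered in Section~\ref{sec:prelims}.
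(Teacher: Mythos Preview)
Your proposal is correct and follows essentially the same approach as the paper: both reduce to Corollary~\ref{cor:1} by showing that the set~\eqref{eq:24} is contained in its uniformity region, using Proposition~\ref{prop:5} and Corollary~\ref{cor:2} to compare $\vphi$, $\vphi'$ and $\vphi''$ at the scale $s=\vphi^{-1}(1/t)$ and at the saddle point $w$. The one minor computational difference is that the paper obtains the quantitative bound $w>C_1^{-1}C_2^{-1}M\,s$ and then invokes the lower scaling of $\vphi$ to get $t\vphi(w)\gtrsim (w/s)^\alpha\gtrsim M^\alpha$, whereas you only establish $w>s$ but then exploit the saddle identity $tw\vphi'(w)=-xw\geq -xs>M$ directly; your route is slightly more economical and avoids the explicit use of Proposition~\ref{prop:1}.
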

\begin{remark}
	The condition $\vphi'(\theta_1)=0$ covers the case $\EE X_1 \in [0,\infty]$. For the case $\EE X_1>0$ it is not, however, optimal, because we do not treat positive $x$ which may be in the area from Corollary \ref{cor:1}. We also note that $\alpha=2$ is included.
\end{remark}
\begin{proof}
	We verify that for $(t,x)$ belonging to the set \eqref{eq:24} we have $w>x_0$ and $wt\vphi''(w)>M_0$, where $w=(\vphi')^{-1}(-x/t)$. Let $M\geq C_1C_2$, where $C_1$, $C_2$ are taken from Propositions \ref{prop:5} and \ref{prop:1}, respectively. By Propositions \ref{prop:5} and \ref{prop:1},
	\[
	\frac{-x}{t} > M\frac{1}{t\vphi^{-1}(1/t)} = M \frac{\vphi \big( \vphi^{-1}(1/t) \big)}{\vphi^{-1}(1/t)} \geq C_1 (C_1^{-1}C_2^{-1}M) \vphi' \big( \vphi^{-1}(1/t) \big) \geq \vphi' \big( C_1^{-1}C_2^{-1}M \vphi^{-1}(1/t) \big).
	\]
	Thus,
	\begin{equation}\label{eq:25}
	w=(\vphi')^{-1}(-x/t)>C_1^{-1}C_2^{-1}M\vphi^{-1}(1/t).
	\end{equation}
	In particular, $w>x_0 \vee 2\theta_0$. Next, by Corollary \ref{cor:2}, there is $c_1 \in (0,1]$ such that
	$tw^2\vphi''(w) \geq c_1t\vphi(w)$.	Since, by Proposition \ref{prop:2}, there is $c_2 \in (0,1]$ such that $\vphi \in \WLSC{\alpha}{c_2}{x_0 \vee \theta_0}$, we obtain
	\[
	t\vphi(w)  \geq c_2 \bigg( \frac{w}{\vphi^{-1}(1/t)} \bigg)^{\alpha}.
	\]
	In view of \eqref{eq:25}, we get that
	\[
	tw^2\vphi''(w) > c_1c_2\big(C_1^{-1}C_2^{-1}M\big)^{\alpha}>M_0,
	\]
	provided that $M$ is sufficiently large. Applying Theorem \ref{thm:1} yields the desired result. 
\end{proof}

\section{Upper and lower estimates on the density}\label{sec:estimates}
In this Section we always assume $\vphi'' \in \WLSC{\alpha-2}{c}{x_0}$ for some $c \in (0,1]$, $x_0 \geq 0$ and $\alpha >0$. By Theorem \ref{thm:1}, the probability distribution $X_t$ has a density $p(t,\cdot)$. Let us define
\[
\Phi(x) = x^2\vphi''(x), \quad x>0.
\]
Clearly, $\Phi \in \WLSC{\alpha}{c}{x_0}$. By $\Phi^{-1}$ we denote its right-sided inverse, i.e.
\[
\Phi^{-1}(s) = \sup \{ r>0 \colon \Phi^*(r)=s \},
\]
where
\[
\Phi^*(r) = \sup_{0 < s \leq r} \Phi(s). 
\]
Clearly, $\Phi^{-1}$ is non-decreasing. Similarly to $\psi^{-1}$, we have
\begin{equation}\label{eq:18}
\Phi^* \big( \Phi^{-1}(s) \big) = s, \qquad \Phi^{-1} \big( \Phi^*(s) \big) \geq s.
\end{equation}
Observe that since for all $x>0$ and $\lambda \geq 1$,
\begin{equation}\label{eq:72}
\Phi(\lambda x) \leq \lambda^2 \Phi(x),
\end{equation}
we obtain
\begin{equation}\label{eq:45}
\Phi^*(\lambda x) \leq \lambda ^2 \Phi^*(x).
\end{equation}
Furthermore, for any $r>0$ let $u$ be such that $\Phi^{-1}(r)=u$. Then by \eqref{eq:45} and \eqref{eq:18}, for any $\lambda \geq 1$,
\[
\Phi^{-1}(\lambda r) = \Phi^{-1}\big(\lambda \Phi^*(u)\big) \geq \Phi^{-1} \big( \Phi^*(\sqrt{\lambda}u) \big) \geq \sqrt{\lambda} u.
\]
Thus, for any $r>0$ and $\lambda \geq 1$,
\begin{equation}\label{eq:17}
\Phi^{-1}(\lambda r) \geq \sqrt{\lambda} \Phi^{-1}(r).
\end{equation}
Let us start with a following observation on the exponent in Theorem \ref{thm:1}.
\begin{proposition}\label{prop:9}
	Suppose $\vphi'' \in \WLSC{\alpha-2}{c}{x_0}$ for some $c \in (0,1]$, $x_0 \geq 0$ and $\alpha > 0$. There is $C>0$ such that for all $x>x_0$,
	\[
	x\vphi'(x)-\vphi(x) \leq C \Phi(x).
	\]
\end{proposition}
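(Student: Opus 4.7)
The plan is to rewrite $x\vphi'(x) - \vphi(x)$ as an integral of $s\vphi''(s)$ and then bound that integral using the scaling property of $\vphi''$. First I would observe that, by differentiating the L\'evy--Khintchine representation \eqref{eq:16} twice and interchanging the order of integration via Tonelli's theorem, one has the identity
\[
x\vphi'(x) - \vphi(x) = \sigma^2 x^2 + \int_{(0,\infty)}\bigl(1-(1+xs)e^{-xs}\bigr)\,\nu(ds) = \int_0^x s\vphi''(s)\,ds,
\]
valid for every $x > 0$. Finiteness is free from the elementary bound $1-(1+u)e^{-u} \leq (u^2/2)\wedge 1$ together with $\int_{(0,\infty)}(1\wedge s^2)\,\nu(ds)<\infty$, and the same dominated-convergence argument shows $x\vphi'(x) - \vphi(x)\to 0$ as $x\to 0^+$.

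For $x > x_0$ I would then split the integral at $x_0$. On $(x_0,x]$, the equivalent formulation of WLSC stated in Preliminaries (namely that $s \mapsto s^{2-\alpha}\vphi''(s)$ is almost increasing on $(x_0,\infty)$) gives the pointwise comparison
\[
\vphi''(s) \leq \frac{1}{c}\Bigl(\frac{x}{s}\Bigr)^{2-\alpha}\vphi''(x), \qquad x_0 < s \leq x.
\]
Since $\alpha > 0$, the monomial $s^{\alpha-1}$ is integrable at $0$, so this comparison immediately yields
\[
\int_{x_0}^x s\vphi''(s)\,ds \leq \frac{1}{c}\,x^{2-\alpha}\vphi''(x)\int_{x_0}^x s^{\alpha-1}\,ds \leq \frac{1}{c\alpha}\,\Phi(x).
\]

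It remains to absorb the residual constant $C_0 := \int_0^{x_0}s\vphi''(s)\,ds$ (vacuous when $x_0 = 0$) into $C\Phi(x)$ uniformly for $x > x_0$; this is the main delicate point. For this I would note that $\Phi$ is continuous and strictly positive on $(0,\infty)$, because $\vphi''(x) = 2\sigma^2 + \int s^2e^{-xs}\nu(ds)$ is continuous and strictly positive under the infinite-variation assumption. Continuity on the compact interval $[x_0, x_0+1]$ gives a positive lower bound there, and $\Phi \in \WLSC{\alpha}{c}{x_0}$ with $\alpha > 0$ forces $\Phi(x) \gtrsim x^\alpha \to \infty$ as $x\to\infty$; hence $\inf_{x > x_0}\Phi(x) > 0$, so $C_0 \leq C\Phi(x)$ for a constant $C$ depending on $\vphi$. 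Combining this with the previous display proves the proposition. The obstacle, then, is not in the main scaling estimate but in this bookkeeping at the threshold $x_0$: WLSC cannot be invoked below $x_0$, and one must fall back on the explicit integral formula together with the continuity and positivity of $\Phi$ at $x_0^+$.
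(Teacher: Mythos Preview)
Your proof is correct and follows essentially the same route as the paper: both start from the identity $x\vphi'(x)-\vphi(x)=\int_0^x s\vphi''(s)\,ds=\int_0^x \Phi(u)\,\tfrac{du}{u}$, bound the integral over $(x_0,x]$ by the weak lower scaling of $\vphi''$ (equivalently of $\Phi$), and absorb the residual constant from $(0,x_0]$ using continuity and positivity of $\Phi$ when $x_0>0$. Your write-up is slightly more detailed in justifying the integral identity and the bookkeeping at $x_0$, but there is no substantive difference in method.
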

\begin{proof}
	Observe that
	\[
	\big( x\vphi'(x)-\vphi(x) \big) - \big( x_0\vphi'(x_0)-\vphi(x_0) \big) = \int_{x_0}^x \Phi(u) \frac{du}{u} = \int_{x_0/x}^1 \Phi(xu) \frac{du}{u}.
	\]
	Thus, by the weak lower scaling property of $\Phi$, for any $x_0/x<u\leq 1$,
	\[
	\big( x\vphi'(x)-\vphi(x) \big) - \big( x_0\vphi'(x_0)-\vphi(x_0) \big) \lesssim \Phi(x) \int_0^1u^{\alpha-1}\, d u,
	\]
	which yields the claim for the case $x_0=0$. If $x_0>0$ then one can use continuity and positivity of $\Phi$.
\end{proof}

\begin{proposition}\label{prop:6}
	Suppose $\vphi'' \in \WLSC{\alpha-2}{c}{x_0}$ for some $c \in (0,1]$, $x_0 \geq 0$ and $\alpha>0$. Then there is $C \geq 1$ such that for all $0<r<1/x_0$,
	\[
	K(r) \leq h(r) \leq CK(r).
	\]
\end{proposition}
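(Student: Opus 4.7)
The inequality $K(r) \leq h(r)$ is immediate from the definitions. For the reverse, my plan is to use the integral representation \eqref{eq:42} together with Corollary \ref{cor:4}, which asserts $K(1/x) \approx \Phi(x) = x^2 \vphi''(x)$ for every $x > x_0$. Substituting $u = 1/s$ in \eqref{eq:42} and splitting at $u = x_0$ gives
\[
h(r) = 2\int_{x_0}^{1/r} K(1/u) \frac{du}{u} + 2\int_0^{x_0} K(1/u) \frac{du}{u},
\]
where the second term is understood as zero when $x_0 = 0$.

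For the first integral I would replace $K(1/u)$ by $\Phi(u)$ via Corollary \ref{cor:4} and then exploit $\Phi \in \WLSC{\alpha}{c}{x_0}$, which is a direct consequence of the hypothesis on $\vphi''$ and was already recorded. Rewriting the lower scaling as $\Phi(u) \leq c^{-1}(ru)^{\alpha}\Phi(1/r)$ for $x_0 < u \leq 1/r$ and integrating yields
\[
\int_{x_0}^{1/r} \Phi(u) \frac{du}{u} \leq c^{-1} r^{\alpha}\Phi(1/r) \int_{x_0}^{1/r} u^{\alpha-1}\, du \leq \frac{1}{c\alpha}\Phi(1/r),
\]
and one more application of Corollary \ref{cor:4} turns this back into $\lesssim K(r)$. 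Here the positivity of $\alpha$ is essential, since it is what makes the inner $u$-integral dominated by $(1/r)^{\alpha}/\alpha$.

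The second integral, nonempty only when $x_0 > 0$, equals the finite constant $h(1/x_0)/2$, so it suffices to bound this by $K(r)$ uniformly in $r \in (0, 1/x_0)$. Using $K(r) \approx \Phi(1/r)$ and the fact that $\Phi \in \WLSC{\alpha}{c}{x_0}$ with $\alpha > 0$ is almost increasing on $(x_0, \infty)$, combined with continuity and positivity of $\Phi$ near $x_0$, one concludes that $\Phi(1/r) \geq c_0$ for some $c_0 > 0$ independent of $r < 1/x_0$. The only delicate point is this bookkeeping around $x_0$; once Corollary \ref{cor:4} is invoked, the scaling of $\vphi''$ does all of the real work.
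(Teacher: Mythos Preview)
Your argument is correct and follows essentially the same route as the paper's proof: both use the identity \eqref{eq:42}, split the integral at the point corresponding to $x_0$, control the main piece via Corollary \ref{cor:4} together with the lower scaling of $\Phi$, and dispose of the remaining constant piece (present only when $x_0>0$) by noting that $K(r)\approx\Phi(1/r)$ is bounded away from zero on $(0,1/x_0)$. The only cosmetic difference is that you perform the substitution $u=1/s$ before splitting, whereas the paper works directly in the variable $s$.
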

\begin{proof}
	Since $K(r)\leq h(r)$, it is enough to show that there is $C \geq 1$ such that for all $0<r<1/x_0$,
	\[
	h(r) \leq C K(r).
	\] 
	In view of \eqref{eq:42}, we have
	\begin{equation}\label{eq:35}
	h(r)  = 2 \int_r^{1/x_0} K(s )\frac{ds}{s} + 2 \int_{1/x_0}^{\infty} K(s) \frac{ds}{s}.
	\end{equation}
	By Corollary \ref{cor:4} we have $K(r) \approx \Phi(1/r)$ for $0<r<1/x_0$, which together with the scaling property imply
	\[
	\int_r^{1/x_0} K(s) \frac{ds}{s} \lesssim K(r), \quad 0<r<1/x_0.
	\]
	That finishes the proof for the case $x_0=0$. If $x_0>0$ then we observe that we also have $K(r) \gtrsim 1$ for all $0<r<1/x_0$. Since the second term on the right-hand side of \eqref{eq:35} is constant, the proof is finished.
\end{proof}
In view of \eqref{eq:43}, Proposition \ref{prop:6} and Corollary \ref{cor:4} we have
\begin{equation}\label{eq:44}
\psi^*(x) \approx h(1/x) \approx K(1/x) \approx \Phi(x),
\end{equation} 
for all $x>x_0$. In particular, $\psi^* \in \WLSC{\alpha}{c}{x_0}$ for some $c \in (0,1]$. Furthermore, for all $x>x_0$,
\begin{align*}
\psi^*(x) \lesssim K(1/x) = x^2 \int_{(0,1/x)} s^2 \, \nu(ds) \lesssim \int_{(0,1/x)} \big( 1-\cos sx \big)\,\nu(ds). 
\end{align*}
Thus, for all $x>x_0$,
\begin{equation}\label{eq:48}
\psi^*(x) \lesssim \Re \psi(x).
\end{equation}
As a corollary, we present now the aforementioned equivalence between scaling property of the second derivative of the Laplace exponent and the real part of the characteristic exponent.
\begin{corollary}\label{cor:5}
	We have $\vphi'' \in \WLSC{\alpha-2}{c}{x_0}$ for some $c \in (0,1]$, $x_0 \geq 0$ and $\alpha>0$ if and only if $\Re \psi \in \WLSC{\alpha}{\tilde{c}}{x_0}$ for some $\tilde{c} \in (0,1]$.
\end{corollary}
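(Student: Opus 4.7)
The plan is to reduce the equivalence of scaling indices to the two-sided comparison $\Re\psi(x) \approx x^2\vphi''(x)$ on $(x_0,\infty)$; once that is available under each hypothesis, lower scaling transfers under $\approx$ and the factor $x^2$ shifts the index by $2$, making the equivalence immediate.

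For the forward implication the comparison is essentially already written down. Under $\vphi'' \in \WLSC{\alpha-2}{c}{x_0}$ the chain \eqref{eq:44} gives $\psi^*(x) \approx x^2\vphi''(x)$ for $x > x_0$, and \eqref{eq:48} together with the trivial $\Re\psi \leq \psi^*$ upgrades this to $\Re\psi(x) \approx x^2\vphi''(x)$. For the converse, positivity of $\alpha$ forces $\Re\psi$ to be almost-increasing on $(x_0,\infty)$: for $y \geq x > x_0$ the equivalent formulation of WLSC yields $\Re\psi(y) \geq \tilde c (y/x)^\alpha \Re\psi(x) \geq \tilde c\,\Re\psi(x)$, so with boundedness of $\Re\psi$ on compacts one obtains $\Re\psi(x) \approx \psi^*(x)$ for $x$ large; the unconditional estimate \eqref{eq:43} then yields $\Re\psi(x) \approx h(1/x)$, so $h(1/\cdot)$ inherits lower scaling $\alpha$. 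The two unconditional bounds $K(1/x) \leq e\,x^2\vphi''(x)$ (from the remark preceding Corollary \ref{cor:4}) and $x^2\vphi''(x) \lesssim h(1/x)$ (by splitting $\int s^2 e^{-xs}\,\nu(ds)$ at $s = 1/x$ and using $u^2 e^{-u} \lesssim e^{-u/2}$ for $u \geq 1$) sandwich $x^2\vphi''(x)$ between $K(1/\cdot)$ and $h(1/\cdot)$, so it remains only to show $K(1/x) \approx h(1/x)$ on the scaling range.

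The main obstacle is this last comparison: under the scaling of $\vphi''$ it is precisely Proposition \ref{prop:6}, but here I have only the scaling of $h(1/\cdot)$ to work with, and I must avoid a circular appeal. My plan is a Karamata-type manipulation of \eqref{eq:42}. Choose $\lambda_0$ with $c\lambda_0^\alpha > 2$; the scaling of $h(1/\cdot)$ then forces
\[
2\int_{1/(\lambda_0 x)}^{1/x} K(s)\, s^{-1}\, ds \;=\; h(1/(\lambda_0 x)) - h(1/x) \;>\; h(1/x),
\]
while the elementary inequality $K(s) \leq (R/s)^2 K(R)$ for $s \leq R$, immediate from $K(r) = (\sigma^2 + \int_{(0,r)} t^2\,\nu(dt))/r^2$ and monotonicity of the numerator, bounds the left-hand side by a constant multiple of $K(1/x)$. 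This yields $K(1/x) \gtrsim h(1/x)$ and, combined with the trivial $K \leq h$, closes the loop.
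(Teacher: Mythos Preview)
Your argument is correct. The forward implication matches the paper verbatim, via \eqref{eq:44} and \eqref{eq:48}. For the converse, the first half --- passing from $\Re\psi \in \WLSC{\alpha}{\tilde c}{x_0}$ to $\psi^* \in \WLSC{\alpha}{c_1}{x_0}$ by almost-monotonicity and an extension argument --- is also essentially the paper's route, only phrased differently.

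The genuine difference lies in the final step. The paper, after recording the unconditional sandwich $x^{-2}K(1/x) \lesssim \vphi''(x) \lesssim x^{-2}h(1/x)$, invokes an external reference \cite[Lemma 2.3]{GS2019} to conclude. You instead prove the missing ingredient $K(1/x) \approx h(1/x)$ directly from scaling of $h(1/\cdot)$ via a Karamata-type manipulation of the identity \eqref{eq:42}: choosing $\lambda_0$ with $c\lambda_0^\alpha > 2$ gives $h(1/(\lambda_0 x)) - h(1/x) > h(1/x)$, while the elementary bound $K(s) \leq (R/s)^2 K(R)$ for $s \leq R$ turns $2\int_{1/(\lambda_0 x)}^{1/x} K(s)\,s^{-1}\,ds$ into $(\lambda_0^2 - 1)\,K(1/x)$, yielding $K(1/x) \gtrsim h(1/x)$. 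This is a clean, self-contained substitute for the cited lemma; the paper's approach is shorter on the page but offloads the work, whereas yours keeps everything internal and reveals exactly which feature of lower scaling (the Tauberian-type gap $h(r/\lambda_0) > 2h(r)$) drives the comparison.
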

\begin{proof}
	In view of \eqref{eq:44} and \eqref{eq:48}, it remains to prove the second implication in the corollary. We first prove that $\psi^* \in \WLSC{\alpha}{c_1}{x_0}$ for some $c_1 \in (0,1]$. Let $x\geq x_0$ and $\lambda \geq 1$. By scaling property of $\Re \psi$,
	\begin{align*}
	\psi^*(\lambda x) &= \max \big\lbrace \psi^*(\lambda x_0), \sup_{\lambda x_0<x\leq \lambda x} \Re \psi(r) \big\rbrace \\ &\gtrsim \max \big\lbrace \psi^*(x_0), \lambda^{\alpha} \sup_{x_0<r\leq x} \Re \psi(r) \big\rbrace.
	\end{align*}
	Now observe that since $\lim_{r \to \infty} \Re \psi(r)=\infty$, there is $x_1 \geq x_0$ such that $\Re \psi(x) \geq \psi^*(x_0)$ for all $x \geq x_1$ and consequently, for all $\lambda \geq 1$ and $x \geq x_1$,
	\[
	\psi^*(\lambda x) \gtrsim \max \big\lbrace \psi^*(x_0), \lambda^{\alpha} \sup_{r\leq x} \Re \psi(r) \big\rbrace = \lambda^{\alpha} \psi^*(x),
	\]
	and by standard extension argument we get scaling property of $\psi^*$ as desired.
	
	It remains to notice that by the integral representation of $\vphi''$,
	\[
	x^{-2}K(1/x) \lesssim \vphi''(x) \lesssim x^{-2}h(1/x).
	\]
	Thus, \cite[Lemma 2.3]{GS2019} yields the claim.
\end{proof}
\begin{proposition}\label{prop:7}
	Suppose $\vphi'' \in \WLSC{\alpha-2}{c}{x_0}$ for some $c \in (0,1]$, $x_0 \geq 0$ and $\alpha >0$. Then for all $r>2h(1/x_0)$,
	\begin{equation*}
	\frac{1}{h^{-1}(r)} \approx \psi^{-1}(r).
	\end{equation*}
	Furthermore, there is $C \geq 1$ such that for all $\lambda \geq 1$ and $r>2h(1/x_0)$,
	\[
	\psi^{-1}(\lambda r) \leq C\lambda^{1/\alpha} \psi^{-1}(r).
	\]
\end{proposition}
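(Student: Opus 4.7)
The plan is to rely on two pillars already established in the excerpt: (a) the equivalence $\psi^*(r)\approx h(1/r)$ valid for $r>x_0$, which is the content of \eqref{eq:43} combined with Proposition \ref{prop:6} and Corollary \ref{cor:4}; and (b) the weak lower scaling $\psi^*\in\WLSC{\alpha}{c_1}{x_0}$ for some $c_1\in(0,1]$, isolated inside the proof of Corollary \ref{cor:5}. Throughout I would view $h^{-1}$ as a generalized inverse of the non-increasing continuous function $h$, so that $h(h^{-1}(r))=r$ whenever $r$ lies in the range of $h$; since $h$ is continuous with scaling, any reasonable choice of inverse differs only by multiplicative constants.

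For the second (inequality) part, I would run the standard ``invert the scaling'' argument. The threshold $r>2h(1/x_0)$ forces $\psi^{-1}(r)>x_0$: indeed, by \eqref{eq:43} one has $\psi^*(x_0)\leq 2h(1/x_0)<r$, so that the generalized inverse jumps past $x_0$. Setting $s:=\psi^{-1}(r)$ and $\mu:=(\lambda/c_1)^{1/\alpha}\geq 1$, the lower scaling of $\psi^*$ applied at the admissible point $s>x_0$ yields
\[
\psi^*(\mu s)\;\geq\; c_1\mu^{\alpha}\psi^*(s)\;=\;\lambda r.
\]
Continuity and monotonicity of $\psi^*$ then give $\psi^{-1}(\lambda r)\leq \mu s=c_1^{-1/\alpha}\lambda^{1/\alpha}\psi^{-1}(r)$, which is the advertised bound with $C=c_1^{-1/\alpha}$.

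For the first (equivalence) part, I would again put $s=\psi^{-1}(r)$, so that by (a) one has $h(1/s)\approx \psi^*(s)=r$. Thus $1/s$ is a point at which $h$ takes a value within a fixed multiplicative window around $r$; it remains to show that any such point is comparable to $h^{-1}(r)$. For this I would transfer the scaling from $\psi^*$ to $h$: via $h(1/\cdot)\approx \psi^*(\cdot)$, the lower scaling of $\psi^*$ on $(x_0,\infty)$ translates into a matching ``upper'' scaling of $h$ on $(0,1/x_0)$, namely $h(a)\gtrsim (b/a)^{\alpha}h(b)$ for $0<a\leq b<1/x_0$. Combined with $h(1/s)\approx r=h(h^{-1}(r))$ and the monotonicity of $h$, this quantitative scaling pins $1/s$ and $h^{-1}(r)$ within a fixed multiplicative constant of each other, proving $1/h^{-1}(r)\approx \psi^{-1}(r)$.

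The step I expect to be most delicate is the bookkeeping around the generalized inverse $h^{-1}$: $h$ is non-increasing and only assumed continuous, so the inverse must be defined carefully and the direction of scaling (``upper'' versus ``lower'') must be tracked as one moves from $\psi^*$ to $h(1/\cdot)$ and finally to $h$. Once this is set up, however, the remaining steps are short scaling arguments of exactly the type already used repeatedly in the paper, and no further structural input beyond \eqref{eq:43}, Proposition \ref{prop:6} and Corollary \ref{cor:5} should be required.
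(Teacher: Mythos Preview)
Your approach is correct and takes a genuinely different route from the paper's. The paper's proof is a two-line appeal to external results: the comparability $1/h^{-1}(r)\approx\psi^{-1}(r)$ is imported directly from \cite[(5.1)]{GS2017}, and the upper scaling of $\psi^{-1}$ is then read off from the scaling of $h^{-1}$, which in turn follows from Proposition~\ref{prop:6} combined with \cite[Lemma~2.3]{GS2019}. You instead work entirely within the paper, inverting the lower scaling of $\psi^*$ by hand for the second assertion and then transferring scaling to $h$ via \eqref{eq:44} for the first. Your route is self-contained and makes the mechanism transparent; the paper's route is terse but opaque without the cited references. Two small remarks: the step ``continuity and monotonicity of $\psi^*$ give $\psi^{-1}(\lambda r)\leq\mu s$'' is not quite justified as written, since $\psi^*$ could in principle be flat at level $\lambda r$ beyond $\mu s$; one more invocation of the lower scaling (or simply taking $\mu=(2\lambda/c_1)^{1/\alpha}$ to force strict inequality) repairs this at the cost of the constant only. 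Also, the fact $\psi^*\in\WLSC{\alpha}{c_1}{x_0}$ is stated explicitly in the text immediately after \eqref{eq:44}, so you need not extract it from the proof of Corollary~\ref{cor:5}.
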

\begin{proof}
	Follows immediately by \cite[(5.1)]{GS2017} and scaling property of $h^{-1}$ provided by Proposition \ref{prop:6} and \cite[Lemma 2.3]{GS2019}.
\end{proof}
\begin{proposition}\label{prop:8}
	Suppose $\vphi'' \in \WLSC{\alpha-2}{c}{x_0}$ for some $c \in (0,1]$, $x_0 \geq 0$ and $\alpha>0$. Then for all $x>x_0$,
	\begin{equation}\label{eq:39}
	\psi^*(x) \approx \Phi^*(x),
	\end{equation}
	and for all $r>\Phi(x_0)$,
	\begin{equation}\label{eq:40}
	\psi^{-1}(r) \approx \Phi^{-1}(r).
	\end{equation}
	Furthermore, there is $C \geq 1$ such that for all $\lambda \geq 1$ and $r>\Phi(x_0)$,
	\begin{equation}\label{eq:85}
	\Phi^{-1}(\lambda r) \leq C \lambda^{1/\alpha} \Phi^{-1}(r). 
	\end{equation}
\end{proposition}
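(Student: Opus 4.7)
The plan is to handle the three assertions in sequence, using \eqref{eq:44} as the bridge from $\psi^*$ to $\Phi$, upgrading from $\Phi$ to $\Phi^*$, and then transferring to inverses.

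For \eqref{eq:39}, in view of \eqref{eq:44} it suffices to establish $\Phi(x) \approx \Phi^*(x)$ for $x > x_0$. The inequality $\Phi^*(x) \geq \Phi(x)$ is immediate from the definition. For the reverse, since $\Phi \in \WLSC{\alpha}{c}{x_0}$ with $\alpha > 0$, the function $\Phi$ is almost increasing on $(x_0,\infty)$: for $x_0 < s \leq x$ one has $\Phi(s) \leq c^{-1} (s/x)^\alpha \Phi(x) \leq c^{-1}\Phi(x)$, hence $\sup_{x_0 < s \leq x}\Phi(s) \leq c^{-1}\Phi(x)$. If $x_0 = 0$ this alone gives $\Phi^*(x) \leq c^{-1}\Phi(x)$; if $x_0 > 0$, I would additionally note that $\Phi^*(x_0) = \sup_{0 < s \leq x_0}\Phi(s)$ is a finite constant (locally bounded on $(0,x_0]$ can be read off from $\vphi''(s) = 2\sigma^2 + \int u^2 e^{-su}\,\nu(du)$ together with the split used in Lemma \ref{lem:2}), and since $\Phi(x) \to \infty$ by the scaling property, this constant is absorbed by $\Phi(x)$ outside a compact subinterval of $(x_0,\infty)$, the latter being handled by continuity and positivity of both $\Phi^*$ and $\Phi$.

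Once \eqref{eq:39} is in place, I would derive \eqref{eq:85} before \eqref{eq:40}. First, \eqref{eq:39} together with $\Phi \in \WLSC{\alpha}{c}{x_0}$ implies $\Phi^* \in \WLSC{\alpha}{c'}{x_0}$: indeed, for $\lambda \geq 1$ and $x > x_0$, $\Phi^*(\lambda x) \geq \Phi(\lambda x) \geq c\lambda^\alpha \Phi(x) \gtrsim \lambda^\alpha \Phi^*(x)$. Setting $u = \Phi^{-1}(r)$ in $\Phi^*(\lambda u) \geq c' \lambda^\alpha \Phi^*(u) = c' \lambda^\alpha r$ and using \eqref{eq:18} yields $\Phi^{-1}(c' \lambda^\alpha r) \leq \lambda u$, which after the substitution $\mu = c'\lambda^\alpha$ gives \eqref{eq:85}. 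For \eqref{eq:40}, write $c_1 \Phi^*(x) \leq \psi^*(x) \leq c_2 \Phi^*(x)$ for $x > x_0$, WLOG with $c_1 \leq 1 \leq c_2$. For $r > \Phi^*(x_0) \approx \Phi(x_0)$, monotonicity and continuity of $\Phi^*$ yield $\Phi^{-1}(r/c_2) \leq \psi^{-1}(r) \leq \Phi^{-1}(r/c_1)$. Applying \eqref{eq:85} to both endpoints absorbs the factors $1/c_1, 1/c_2$ into multiplicative constants, giving $\psi^{-1}(r) \approx \Phi^{-1}(r)$ on the required range. Alternatively, one could conclude \eqref{eq:85} at the very end by combining \eqref{eq:40} with the scaling of $\psi^{-1}$ from Proposition \ref{prop:7}.

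The main obstacle is really a bookkeeping one: ensuring that the boundary behaviour near $x_0$ (when $x_0 > 0$) does not spoil uniformity of the constants in the equivalence $\Phi^* \approx \Phi$, and that the constants $c_1, c_2$ picked up in passing from $\psi^*$ to $\Phi^*$ are properly absorbed by the upper scaling of $\Phi^{-1}$ when inverting. Both issues are resolved by the WLSC assumption with $\alpha > 0$, which simultaneously forces almost monotonicity of $\Phi$ on $(x_0,\infty)$, blow-up of $\Phi$ at infinity, and polynomial control of $\Phi^{-1}$.
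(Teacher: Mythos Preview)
Your argument is correct and follows essentially the same path as the paper: both use \eqref{eq:44} to link $\psi^*$ with $\Phi$, then handle the contribution of $\sup_{0<s\le x_0}\Phi(s)$ separately when $x_0>0$ (the paper absorbs $\Phi^*(x_0)$ into $\psi^*(x)$ via monotonicity of $\psi^*$, you absorb it into $\Phi(x)$ via $\Phi\to\infty$ plus continuity on a compact---equivalent bookkeeping). The only genuine difference is the order in which you treat \eqref{eq:40} and \eqref{eq:85}: the paper first sandwiches $\Phi^{-1}$ between $\psi^{-1}(C^{-1}r)$ and $\psi^{-1}(Cr)$, then invokes the scaling of $\psi^{-1}$ from Proposition~\ref{prop:7} to collapse the constants, obtaining \eqref{eq:85} at the very end as a corollary; you instead extract the lower scaling of $\Phi^*$ directly from \eqref{eq:39} and $\Phi\in\WLSC{\alpha}{c}{x_0}$, deduce \eqref{eq:85} first, and then use it to absorb the constants in \eqref{eq:40}. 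Your route is slightly more self-contained (it does not rely on Proposition~\ref{prop:7}), and you even name the paper's ordering as an alternative. One small caution: the step ``$\Phi^*(\lambda u)\ge c'\lambda^\alpha r$ and \eqref{eq:18} yield $\Phi^{-1}(c'\lambda^\alpha r)\le \lambda u$'' does not follow from \eqref{eq:18} alone, since the right inverse satisfies $\Phi^*(v)\ge s\Rightarrow \Phi^{-1}(s)\le v$ only when the inequality is strict; the clean fix is to argue via almost-increase of $x\mapsto\Phi^*(x)x^{-\alpha}$, exactly as you do implicitly after the substitution $\mu=c'\lambda^\alpha$.
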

\begin{proof}
	The first inequality of \eqref{eq:39} follows immediately from \eqref{eq:44}. If $x_0=0$ then the second inequality is also the consequence of \eqref{eq:44}. If this is not the case then observe that for $x>x_0$,
	\begin{align*}
	\Phi^*(x) &= \max \big\lbrace \sup_{0<y\leq x_0} \Phi(y), \sup_{x_0<y\leq x} \Phi(y) \big\rbrace \\ &\lesssim \max \{ \Phi^*(x_0), \psi^*(x) \} \\ &\leq \bigg( 1 + \frac{\Phi^*(x_0)}{\psi^*(x_0)} \bigg) \psi^*(x),
	\end{align*}
	and the first part is proved. Furthermore, it follows that
	\[
	\psi^{-1}(C^{-1}r) \leq \Phi^{-1}(r) \leq \psi^{-1}(Cr),
	\]
	for all $r>C\psi^*(x_0)$. Hence, by Proposition \ref{prop:7},
	\[
	\Phi^{-1}(r) \approx \psi^{-1}(r),
	\]
	for all $r>C \max \{ \psi^*(x_0), 2h(1/x_0) \}$, and \eqref{eq:40} follows by standard extension argument. The scaling property of $\Phi^{-1}$ is an easy consequence of \eqref{eq:40} and Proposition \ref{prop:7}.
\end{proof}
Since $\Phi \leq \Phi^*$, by Proposition  \ref{prop:8} and \eqref{eq:44} we immediately obtain the following.
\begin{remark}\label{rem:2}
	Suppose $\vphi'' \in \WLSC{\alpha-2}{c}{x_0}$ for some $c \in (0,1]$, $x_0 \geq 0$ and $\alpha>0$. There is $c_1 \in (0,1]$ such that for all $x>x_0$,
	\[
	c_1\Phi^*(x) \leq \Phi(x) \leq \Phi^*(x).
	\]
\end{remark}
\begin{proposition}\label{prop:10}
	Suppose that $\vphi'' \in \WLSC{\alpha-2}{c}{x_0}$ for some $c \in (0,1]$, $x_0 \geq 0$ and $\alpha>1$. Assume also that $\vphi'(\theta_1)=0$. Then for all $x>x_0 \vee 2\theta_0$,
	\[
	\Phi^*(x) \approx \vphi(x),
	\]
	and for all $r>\Phi(x_0 \vee 2\theta_0)$,
	\[
	\Phi^{-1}(r) \approx \vphi^{-1}(r).
	\]
\end{proposition}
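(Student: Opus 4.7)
The plan is to reduce both assertions to Corollary \ref{cor:2} together with the basic comparison between $\Phi$ and its majorant $\Phi^*$. For the first claim, Proposition \ref{prop:2} promotes the hypothesis $\vphi'' \in \WLSC{\alpha-2}{c}{x_0}$ to the lower scaling $\vphi \in \WLSC{\alpha}{c}{x_0 \vee \theta_0}$. Since $\alpha > 1$ and $\vphi'(\theta_1)=0$, Corollary \ref{cor:2} applies and yields $\Phi(x) \approx \vphi(x)$ for $x > y_0 := x_0 \vee 2\theta_0$. By Remark \ref{rem:2}, $\Phi^*(x) \approx \Phi(x)$ for all $x > x_0$, and composing these two equivalences gives $\Phi^*(x) \approx \vphi(x)$ on $(y_0,\infty)$, which is the first assertion.

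For the inverse claim, fix constants $c_1, c_2 > 0$ with $c_1 \vphi(x) \leq \Phi^*(x) \leq c_2 \vphi(x)$ for $x > y_0$. Because $\Phi \in \WLSC{\alpha}{c}{x_0}$ with $\alpha > 0$, both $\Phi$ and $\Phi^*$ tend to infinity, and $\Phi^*$ is continuous and non-decreasing, so for $r$ larger than some threshold depending on $y_0$ the point $x := \Phi^{-1}(r)$ exceeds $y_0$ and satisfies $\Phi^*(x) = r$. Substituting this $x$ into the two-sided bound and inverting (using that $\vphi$ is continuous and strictly increasing on $[\theta_0,\infty)$) produces
\[
\vphi^{-1}(r/c_2) \leq \Phi^{-1}(r) \leq \vphi^{-1}(r/c_1).
\]
The lower scaling of $\vphi$ with index $\alpha > 1$ translates, by a standard inversion argument, to $\vphi^{-1}(\lambda r) \leq c^{-1/\alpha}\lambda^{1/\alpha}\vphi^{-1}(r)$ for all $\lambda \geq 1$, so $\vphi^{-1}(r/c_1) \approx \vphi^{-1}(r) \approx \vphi^{-1}(r/c_2)$. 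This gives $\Phi^{-1}(r) \approx \vphi^{-1}(r)$ on the regime where $\Phi^{-1}(r) > y_0$.

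The main obstacle is bookkeeping near the threshold, i.e.\ the possibly non-empty range $r \in (\Phi(y_0), \Phi^*(y_0)]$, in which $\Phi^{-1}(r)$ may fail to exceed $y_0$ (since $\Phi^*$ is defined as a running supremum, it need not coincide with $\Phi$ at $y_0$). On this compact range both $\Phi^{-1}(r)$ and $\vphi^{-1}(r)$ are continuous functions of $r$ taking values in a compact interval bounded away from $0$, so the equivalence extends with possibly worse constants. Equivalently, one can invoke the scaling bound \eqref{eq:17} for $\Phi^{-1}$ combined with the scaling of $\vphi^{-1}$ to transfer the equivalence from a slightly larger $r$ down to the boundary at the cost of a multiplicative constant absorbed into $\approx$. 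Once this last point is addressed, the equivalence $\Phi^{-1}(r) \approx \vphi^{-1}(r)$ holds uniformly for $r > \Phi(y_0)$, completing the proof.
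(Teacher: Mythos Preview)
Your proof is correct and follows essentially the same route as the paper: the first part is exactly Corollary~\ref{cor:2} combined with Remark~\ref{rem:2}, and for the second part the paper simply refers to the proof of Proposition~\ref{prop:8}, which is precisely the inversion-plus-scaling argument you spell out (there with $\psi^{-1}$ in place of $\vphi^{-1}$), including the ``standard extension argument'' near the threshold that you make explicit.
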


\begin{proof}
	Corollary \ref{cor:2} and Remark \ref{rem:2} yield the first part. The proof of the second omitted due to similarity to the proof of Proposition \ref{prop:8}.
\end{proof}

\subsection{Upper estimates}
From this moment on, we additionally assume that $\sigma=0$. As explained in Preliminaries, that is equivalent to saying that $\bfX$ satisfies the integral condition \eqref{eq:61}. Suppose $\vphi'' \in \WLSC{\alpha-2}{c}{x_0}$. Recall that since $\vphi''$ is positive and continuous on $(0,\infty)$, if $x_0>0$ then at the cost of worsening the constant $c$, we can extend area of comparability to any $x_1 \in (0,x_0)$ so that $\vphi'' \in \WLSC{\alpha-2}{c'}{x_1}$, where $c'$ depends on $x_1$. Thus, if $\theta_1>0$ and $x_0>0$ then we may and do assume that $x_0$ is shifted so that $x_0 \leq \theta_1$. With that in mind, let us define $\eta \colon [0,\infty) \mapsto [0,\infty]$,
\begin{align*}
\eta(s) = \begin{cases}
\infty & \text{if } s=0,\\
s^{-1}\Phi^*(1/s) & \text{if } 0<s< x_0^{-1},\\
As^{-1}\big\lvert \vphi(1/s) \big\rvert & \text{if } x_0^{-1}\leq s,
\end{cases}
\end{align*}
where $A=\Phi^*(x_0)/\big\lvert \vphi(x_0) \big\rvert$.

Let us comment on the function $\eta$. As we will see in Theorem \ref{thm:2}, it will play a role of majorant on the transition density. In such setting it is clear that $\eta$ must be non-negative. Moreover, in the proof we will require from it to be monotone. Now, if $\theta_1>0$ then we know that $\vphi$ is indeed monotone for $x \in (0,\theta_1)$, but also negative. Thus, a change of sign is required. On the other hand, if $\theta_1=0$ then $\vphi \geq 0$ and there is no need for absolute value and shifting of $x_0$. In general, however, $\vphi$ may be negative in a neighbourhood of the origin and change sign in $\theta_1$, so one has to be careful in expanding scaling area to the proper place. Note that by Corollary \ref{cor:3} and Remark \ref{rem:2}, $A \leq c'$, where $c'$ depends only on $\theta_1$.

Denote
\begin{equation}\label{eq:87}
b_r =b + \int_{(0,\infty)} s \big( \ind{s<r} - \ind{s<1} \big)\,\nu(ds).
\end{equation}

\begin{theorem}\label{thm:2}
	Let $\bfX$ be a spectrally positive L\'{e}vy process of infinite variation with the L\'{e}vy-Khintchine exponent $\psi$ and the Laplace exponent $\vphi$. Suppose that $\sigma=0$ and $\vphi'' \in \WLSC{\alpha-2}{c}{x_0}$ for some $c \in (0,1]$, $x_0 \geq 0$ and $\alpha >0$. We also assume that $\nu(d x)$ has an almost monotone density $\nu(x)$. Then the probability distribution of $X_t$ has a density $p(t,\cdot)$. Moreover, there is $C > 0 $ such that for all $t \in (0,1/\Phi(x_0))$ and $x \in \RR$,
	\begin{equation}\label{eq:28}
	p\big( t,x+tb_{1/\psi^{-1}(1/t)}\big) \leq C \min \big\lbrace \Phi^{-1}(1/t),t\eta(|x|) \big\rbrace.
	\end{equation}
\end{theorem}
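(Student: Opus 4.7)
The plan is to establish the two entries of the minimum in \eqref{eq:28} separately and take the smaller one pointwise. For the uniform bound $p(t,x)\leq C\Phi^{-1}(1/t)$ together with existence of the density, I would invoke Fourier inversion: by \eqref{eq:48}, Corollary \ref{cor:5} and Proposition \ref{prop:8}, $\Re\psi(\xi)\gtrsim\psi^*(|\xi|)\approx\Phi^*(|\xi|)$ for $|\xi|>x_0$, so the weak lower scaling of $\psi^*$ with index $\alpha>0$ makes $e^{-t\Re\psi(\cdot)}$ integrable for $t<1/\Phi(x_0)$. The density exists, satisfies $p(t,x)\leq\tfrac{1}{2\pi}\int_\RR e^{-t\Re\psi(\xi)}\,d\xi$, and splitting this integral at $|\xi|=\psi^{-1}(1/t)$ followed by the scaling of $\psi^*$ yields $p(t,x)\leq C\psi^{-1}(1/t)\approx C\Phi^{-1}(1/t)$. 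This half of the minimum is unaffected by the drift shift and transfers directly to the left-hand side of \eqref{eq:28}.

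For the tail half $p(t,x+tb_{r_0})\leq Ct\eta(|x|)$ with $r_0=1/\psi^{-1}(1/t)$, the regime $|x|\leq r_0$ is subsumed by the uniform bound, because $t\eta(r_0)\approx t\cdot r_0^{-1}\Phi^*(1/r_0)\approx\psi^{-1}(1/t)\approx\Phi^{-1}(1/t)$ and $\eta$ is non-increasing near zero. For $|x|>r_0$ I would decompose via the L\'evy-It\^o recipe at level $r_0$: write $\bfX=\check\bfX+\hat{\bfX}$ in law, where $\check\bfX$ has triplet $(0,b_{r_0},\nu|_{(0,r_0]})$ and $\hat{\bfX}$ is an independent compound Poisson with L\'evy measure $\nu|_{(r_0,\infty)}$ and intensity $\lambda_{r_0}=\nu(r_0,\infty)$. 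Since $\lambda_{r_0}\leq h(r_0)\lesssim\psi^*(1/r_0)=1/t$ by \eqref{eq:43}, the product $t\lambda_{r_0}$ is bounded by an absolute constant. Conditioning on the number of large jumps yields
\[
p(t,x+tb_{r_0})\leq e^{-t\lambda_{r_0}}p_{\check X}(t,x+tb_{r_0})+t\int_{(r_0,\infty)}p_{\check X}(t,x+tb_{r_0}-z)\,\nu(dz)+R_t(x),
\]
where $R_t(x)$ collects contributions from two or more large jumps and is controlled by a convergent series. For the one-big-jump integral, almost monotonicity of $\nu(x)$ gives $\nu(z)\lesssim\nu(|x|)$ for $z$ in a suitable neighbourhood of $|x|$, so combining with $\int p_{\check X}(t,\cdot)\leq 1$, \eqref{eq:78}, Corollary \ref{cor:4} and Proposition \ref{prop:6} bounds the contribution by $Ct\nu(|x|)\lesssim tK(|x|)/|x|\approx t\Phi^*(1/|x|)/|x|=t\eta(|x|)$.

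The main obstacle is the zero-big-jump term $e^{-t\lambda_{r_0}}p_{\check X}(t,x+tb_{r_0})$: the Fourier bound applied to $\check\bfX$ only yields $\leq C\Phi^{-1}(1/t)$, which exceeds the target $t\eta(|x|)$ as soon as $|x|\gg r_0$. The saving is that $\check\bfX$ has jumps bounded by $r_0$, hence $\check X_t-tb_{r_0}$ has all exponential moments; a contour shift in the Fourier inversion formula (equivalently an Esscher tilt by a parameter proportional to $|x|/(tr_0)$, with sign chosen according to that of $x$) yields super-polynomial decay of $p_{\check X}(t,x+tb_{r_0})$ in $|x|/r_0$, easily subsumed by $t\eta(|x|)$ in this regime. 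Tracking the drift correction $tb_{r_0}$ correctly through the decomposition and adapting the tilt to treat both signs of $x$ in the spectrally positive setting — where the process has no negative jumps and the negative tail is therefore of a very different nature from the positive one — are the most delicate bookkeeping points of the argument.
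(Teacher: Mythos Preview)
Your approach differs from the paper's: rather than carrying out the L\'evy--It\^o decomposition and jump-counting from scratch, the paper verifies the hypotheses of \cite[Theorem~5.2]{GS2017} --- namely $\nu(x)\lesssim\eta(x)$, $\eta$ non-increasing with doubling, and $\int_\RR e^{-t\Re\psi(\xi)}\,d\xi\leq C\psi^{-1}(1/t)$ --- and then invokes that result as a black box to obtain a bound of the shape $C\psi^{-1}(1/t)\min\bigl\{1,\,t(\psi^{-1}(1/t))^{-1}\eta(|x|)+(1+|x|\psi^{-1}(1/t))^{-3}\bigr\}$. The only work left in the paper is to absorb the cubic-decay term into $t\eta(|x|)$ via \eqref{eq:45} and $\Phi^{-1}\approx\psi^{-1}$. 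What you propose is essentially a re-derivation of that cited theorem, which is a legitimate and more self-contained route, but considerably longer.

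Your outline has a genuine gap at the remainder $R_t(x)$ from two or more large jumps. Calling it ``controlled by a convergent series'' gives at best $R_t(x)\lesssim\Phi^{-1}(1/t)\sum_{n\geq 2}(t\lambda_{r_0})^n/n!\lesssim\Phi^{-1}(1/t)$ via the uniform bound on $p_{\check X}$, and this is \emph{not} dominated by $t\eta(|x|)$ once $|x|\gg r_0$. To obtain the correct decay you must show that the convolution powers $(\nu|_{(r_0,\infty)})^{*n}$ remain controlled by $C^n\lambda_{r_0}^{n-1}\eta$; this is precisely where the doubling of $\eta$ does real work and is the technical core of the Grzywny--Szczypkowski argument, not a bookkeeping triviality. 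A related issue affects your one-big-jump term: you only treat $z$ ``in a suitable neighbourhood of $|x|$'' and appeal to $\int p_{\check X}\leq 1$, but the integral runs over all $z>r_0$. On the region $r_0<z<|x|/2$ almost monotonicity of $\nu$ goes the wrong way ($\nu(z)$ may be much larger than $\nu(|x|)$), so that piece must instead be controlled via the off-centre decay of $p_{\check X}(t,\cdot)$ that you reserve for the zero-jump term; and for $x<0$ the entire integral is of this ``far'' type. Both gaps are fillable along the lines of \cite{GS2017}, but they require the doubling of $\eta$ and a more careful case splitting than your sketch provides.
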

\begin{proof}
	In the first step we verify assumptions of \cite[Theorem 5.2]{GS2017}. First observe that for any $\lambda >0$,
	\begin{equation}\label{eq:36}
	\vphi''(\lambda) \geq \int_0^{1/\lambda} s^2 e^{-\lambda s} \nu(s) \, d s \gtrsim \nu(1/\lambda)\lambda^{-1},
	\end{equation}
	thus, by Corollary \ref{cor:3}, $\nu(x) \lesssim \eta(x)$ for all $x>0$. Since $\eta$ is non-increasing,  we conclude that the first assumption is satisfied.	Next, we claim that $\eta$ has a doubling property on $(0,\infty)$. Indeed, since $\vphi''$ is non-increasing, by Corollary \ref{cor:4} and \eqref{eq:44}, for $0<s<x_0^{-1}$,
	\[
	\eta \big( \tfrac{1}{2}s \big) \approx  4s^{-2}\vphi'' ( 2/s ) \lesssim s^{-2}\vphi''(s) \approx \eta(s).   
	\]
	This completes the argument for the case $x_0=0$. If $x_0>0$, then Proposition \ref{prop:1} (or \eqref{eq:47} if $\theta_0>0$) yields the claim for $s>2x_0^{-1}$. Lastly, the function
	\[
	\big[ \tfrac12x_0,x_0 \big] \ni x \mapsto \frac{\Phi^*(2x)}{\big\lvert \vphi(x) \big\rvert},
	\]
	is continuous, hence bounded.
	
	Therefore, since $s \wedge |x|-\tfrac12 |x| \geq \tfrac12s$ for $s>0$ and $x \in \RR$, the doubling property of $\eta$ and \eqref{eq:44} imply the second assumption. Finally, since $\psi^*$ has the weak lower scaling property and satisfies \eqref{eq:48}, by \cite[Theorem 3.1]{GS2019} and Proposition \ref{prop:7}, there are $C>0$ and $t_1 \in (0,\infty]$ such that for all $t \in (0,t_1)$,
	\begin{equation*}
	\int_{\RR} e^{-t\Re \psi(\xi)}\, d \xi \leq C \psi^{-1}(1/t),
	\end{equation*}
	with $t_1=\infty$ whenever $x_0=0$. Note that if $t_1<48/\Phi(x_0)$, then using positivity and monotonicity we can expand the estimate for $t_1 \leq t <48/\Phi(x_0)$, and the first step is finished.
		
	Therefore, by \cite[Theorem 5.2]{GS2017} there is $C>0$ such that for all $t \in (0,1/\Phi(x_0))$ and $x \in \RR$,
	\[
	p \big( t,x+tb_{1/\psi^{-1}(1/t)} \big) \leq C \psi^{-1}(1/t) \cdot \min \Big\lbrace 1,t\big(\psi^{-1}(1/t) \big)^{-1}\eta(|x|)+\big( 1+|x|\psi^{-1}(1/t) \big)^{-3}\Big\rbrace.
	\]
	Now, it suffices to prove that
	\begin{equation}\label{eq:27}
	\frac{\psi^{-1}(1/t)}{\big( 1+|x|\psi^{-1}(1/t) \big)^3} \lesssim t\eta(|x|),
	\end{equation}
	whenever $t\eta(|x|) \leq \frac{A}{c'} \Phi^{-1}(1/t)$.

	First let us observe that for any $\epsilon \in (0,1]$, the condition $t\eta(|x|) \leq \frac{A\epsilon}{c'}\Phi^{-1}(1/t)$ implies
	\begin{equation}\label{eq:26}
	t\Phi^*(1/|x|) \leq \epsilon|x|\Phi^{-1}(1/t).
	\end{equation}
	Indeed, by Corollary \ref{cor:3} and Remark \ref{rem:2}, we have $|x|\eta(|x|) \geq \frac{A}{ c'}\Phi^*(1/|x|)$, which entails \eqref{eq:26}.	Furthermore, we have $\epsilon^{1/3}|x|\Phi^{-1}(1/t) \geq 1$, since otherwise by \eqref{eq:45} we would have
	\[
	1<t\Phi^*\bigg(\frac{1}{\epsilon^{1/3}|x|}\bigg) \leq \frac{1}{\epsilon^{2/3}} t\Phi^*(1/|x|),	
	\]
	which in turn yields
	\[
	\epsilon^{1/3}|x|\Phi^{-1}(1/t) < \epsilon^{-2/3}t\Phi^*(1/|x|),	
	\]
	contrary to \eqref{eq:26}.

	Now we suppose $t\eta(|x|)\leq \frac{A}{c'}\Phi^{-1}(1/t)$. Since $|x|\Phi^{-1}(1/t) \geq 1$, by \eqref{eq:45} we infer that
	\[
	t\Phi^*(1/|x|) = \frac{\Phi^*(1/|x|)}{\Phi^* \big( |x|\Phi^{-1}(1/t) \cdot 1/|x| \big)} \geq \frac{1}{\big( |x|\Phi^{-1}(1/t) \big)^2} \geq \frac{|x|\Phi^{-1}(1/t)}{\big(1+ |x|\Phi^{-1}(1/t) \big)^3},
	\]
	It remains to notice that Proposition \ref{prop:8}
	entails \eqref{eq:27}, and the proof is completed.
\end{proof}

\begin{remark}\label{rem:4}
	In statement of Theorem \ref{thm:2} we may replace $b_{1/\psi^{-1}(1/t)}$ by $b_{1/\Phi^{-1}(1/t)}$. Indeed, if $0<r_1\leq r_2 <1/\Phi(x_0)$ then by \eqref{eq:44} and Proposition \ref{prop:8},
	\[
	\big\lvert b_{r_1}-b_{r_2} \big\rvert \leq \int_{(r_1,r_2]} s\,\nu(ds) \leq r_1^{-1}r_2^2h(r_2) \lesssim r_1^{-1}r_2^2 \psi^*(r_2^{-1}) \lesssim r_1^{-1}r_2^2 \Phi^*(r_2^{-1}).
	\]
	Thus, again by Proposition \ref{prop:8}, there is $C \geq 1$ such that for all $t \in (0,1/\Phi(x_0))$,
	\[
	\Big\lvert b_{1/\psi^{-1}(1/t)} - b_{1/\Phi^{-1}(1/t)} \Big\rvert \leq \frac{C}{t\Phi^{-1}(1/t)}.
	\]
	Now, recall that if $t\eta(|x|) \leq \frac{A\epsilon}{c'}\Phi^{-1}(1/t)$, then by the proof of Theorem \ref{thm:2} we have $|x|\Phi^{-1}(1/t) \geq \epsilon^{-1/3}$.
	Therefore, by taking $\epsilon = (2C)^{-3}$ we obtain $|x| \geq \frac{2C}{\Phi^{-1}(1/t)}$ and
	by monotonicity and doubling property of $\eta$ we conclude that
	\[
	\eta \bigg( \Big\lvert x+t \Big( b_{1/\psi^{-1}(1/t)} - b_{1/\Phi^{-1}(1/t)} \Big) \Big\rvert \bigg) \lesssim \eta (|x|).
	\]
\end{remark}

\subsection{Lower estimates}
We begin with an estimate which, together with Theorem \ref{thm:2} will allow us to localize the supremum of $p(t,\cdot)$. Note that here we require the scaling condition with $\alpha \geq 1$.
\begin{lemma}\label{lem:3}
	Let $\bfX$ be a spectrally positive L\'{e}vy process of infinite variation and Laplace exponent $\vphi$. Suppose that $\sigma=0$ and $\vphi'' \in \WLSC{\alpha-2}{c}{x_0}$ for some $c \in (0,1]$, $x_0 \geq 0$ and $\alpha \geq 1$. Then there is $M_0>1$ such that for each $M\geq M_0$ and $\rho_1,\rho_2>0$ there exists $C>0$, so that for all $t \in (0,1/\Phi(x_0))$ and any $x \in \RR$ satisfying
	\[
	-\frac{\rho_1}{\Phi^{-1}(1/t)} \leq x+t\vphi' \big( \Phi^{-1}(M/t) \big) \leq \frac{\rho_2}{\Phi^{-1}(1/t)}
	\] 
	we have
	\[
	p(t,x) \geq C\Phi^{-1}(1/t).
	\]
\end{lemma}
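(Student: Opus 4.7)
The plan is to adapt the Mellin-inversion argument from the proof of Theorem \ref{thm:1} at the contour level $w := \Phi^{-1}(M/t)$, and then to check that the resulting Gaussian-type asymptotic survives perturbations of the position $x$ on the scale $1/\Phi^{-1}(1/t)$ around the reference point $-t\vphi'(w)$.

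First I would verify that the hypotheses of Theorem \ref{thm:1} are met at $w=\Phi^{-1}(M/t)$ for $M$ large enough (this is what pins down the constant $M_{0}$ of the lemma). By Remark \ref{rem:2}, $tw^{2}\vphi''(w)=t\Phi(w)\geq c_{1}t\Phi^{*}(w)=c_{1}M$, which exceeds the universal constant from Theorem \ref{thm:1} once $M$ is sufficiently large. The inequality $w>x_{0}$ follows from $t<1/\Phi(x_{0})$ together with the scaling of $\Phi^{*}$ given in Proposition \ref{prop:8}. At the same time, \eqref{eq:17} yields $w\geq\sqrt{M}\,\Phi^{-1}(1/t)$, so $\sqrt{t\vphi''(w)}=\sqrt{t\Phi(w)}/w\lesssim 1/\Phi^{-1}(1/t)$, which is exactly the order of magnitude needed for the conclusion.

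Repeating the contour shift from the proof of Theorem \ref{thm:1} then produces, for every $x$ lying in the prescribed window,
\[
p(t,x) = \frac{e^{-t\Theta(x/t,0)}}{2\pi\sqrt{t\vphi''(w)}}\int_{\RR}\exp\Big\{-t\Big[\Theta\big(\tfrac{x}{t},\tfrac{u}{\sqrt{t\vphi''(w)}}\big)-\Theta\big(\tfrac{x}{t},0\big)\Big]\Big\}\,du.
\]
Setting $\delta:=x+t\vphi'(w)$, one has $-t\Theta(x/t,0)=-t(w\vphi'(w)-\vphi(w))+w\delta$. Proposition \ref{prop:9} together with $t\Phi(w)\approx M$ bounds the first summand from below by $-CM$, while the upper scaling of $\Phi^{-1}$ from Proposition \ref{prop:8} gives
\[
|w\delta| \leq \frac{w(\rho_{1}\vee\rho_{2})}{\Phi^{-1}(1/t)} \leq C(\rho_{1}\vee\rho_{2})\,M^{1/\alpha}.
\]
Hence the prefactor $e^{-t\Theta(x/t,0)}$ is bounded below by a positive constant depending only on $M,\rho_{1},\rho_{2},\alpha$.

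For the inner integral $I$ I would redo the Taylor expansion of $\Theta(x/t,\cdot)-\Theta(x/t,0)$ around $0$, but, unlike in the proof of Theorem \ref{thm:1}, the linear term in $\lambda$ no longer vanishes; for $|u|\leq M^{1/4}$ and $M$ large this yields
\[
-t\Big[\Theta\big(\tfrac{x}{t},\tfrac{u}{\sqrt{t\vphi''(w)}}\big)-\Theta\big(\tfrac{x}{t},0\big)\Big] = iu\beta - \tfrac{1}{2}u^{2} + r(u),\qquad |r(u)|\leq CM^{-1/2}|u|^{3},
\]
with $\beta:=\delta/\sqrt{t\vphi''(w)}$. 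The same chain of scaling inequalities gives $|\beta|\leq C(\rho_{1}\vee\rho_{2})M^{1/\alpha-1/2}$, bounded once $M,\rho_{1},\rho_{2},\alpha$ are fixed, and approximating the central contribution to $I$ by the explicit Gaussian integral $\int_{\RR}e^{iu\beta-u^{2}/2}\,du=\sqrt{2\pi}\,e^{-\beta^{2}/2}$ (with the tail $|u|>M^{1/4}$ handled by the sub-Gaussian/sub-$\alpha$-stable decay from \eqref{eq:6}) gives $I\geq c(M,\rho_{1},\rho_{2})>0$. The hard part is precisely this step: because $\delta\neq 0$ for general $x$ in the window, the integrand is genuinely oscillatory, its real part has no clear sign, and the lower bound must be extracted from the Gaussian evaluation $\sqrt{2\pi}\,e^{-\beta^{2}/2}$; the role of the upper scaling of $\Phi^{-1}$, equivalently of the weak lower scaling of $\vphi''$, is to keep $|\beta|$ and hence $e^{-\beta^{2}/2}$ bounded away from zero on the allowed window. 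Combining the three estimates and using $1/\sqrt{t\vphi''(w)}\gtrsim \Phi^{-1}(1/t)$ completes the proof.
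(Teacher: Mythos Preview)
Your saddle-point strategy is natural and genuinely different from the paper's argument, but there is a gap in the final step, precisely where you extract a positive lower bound for the oscillatory integral $I$. The approximation error between the central piece of $I$ and the Gaussian $\int_{\RR}e^{iu\beta-u^{2}/2}\,du=\sqrt{2\pi}\,e^{-\beta^{2}/2}$ is of order $M^{-1/2}$ (plus exponentially small tails), a quantity depending on $M$ alone. On the other hand, at the boundary of the window one has $|\delta|=(\rho_{1}\vee\rho_{2})/\Phi^{-1}(1/t)$, while $\sqrt{t\vphi''(w)}=\sqrt{t\Phi(w)}/w\leq \sqrt{M}/w\leq 1/\Phi^{-1}(1/t)$ by $t\Phi(w)\le M$ and \eqref{eq:17}; hence $|\beta|\geq \rho_{1}\vee\rho_{2}$. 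Since the lemma fixes $M_{0}$ first and then allows arbitrary $\rho_{1},\rho_{2}$, for large $\rho$ the Gaussian value $\sqrt{2\pi}\,e^{-\beta^{2}/2}$ becomes much smaller than the fixed $O(M^{-1/2})$ error, and no lower bound on $I$ follows. Trying instead to let $M$ grow with $\rho$ also fails when $\alpha<2$: your own bound $|\beta|\lesssim(\rho_{1}\vee\rho_{2})M^{1/\alpha-1/2}$ forces $e^{-\beta^{2}/2}$ to decay like $\exp\{-c\rho^{2}M^{2/\alpha-1}\}$, faster than the polynomial improvement in the error. Your scheme does give the result for $\rho_{1},\rho_{2}$ small relative to $M$, but not uniformly.

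The paper avoids this competition altogether by quoting the general local lower bound \cite[Theorem~5.4]{GS2019}: for every $\theta>0$ there is $c>0$ with $p\big(t,y+tb_{h^{-1}(1/t)}\big)\geq c\,(h^{-1}(1/t))^{-1}$ whenever $|y|\leq\theta h^{-1}(1/t)$. Since $h^{-1}(1/t)\approx 1/\Phi^{-1}(1/t)$, all that remains is to check that the two centres $-t\vphi'(\Phi^{-1}(M/t))$ and $-tb_{h^{-1}(1/t)}$ differ by $O(1/\Phi^{-1}(1/t))$; this is a short computation using the integral representations of $\vphi'$ and of $b_{r}$ from \eqref{eq:87}. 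The external result already absorbs an arbitrary radius $\theta$ into its constant, so the issue you run into never appears.
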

\begin{proof}
	Without loss of generality we may assume that $b=0$. By \cite[Theorem 5.4]{GS2019}, for any $\theta>0$ there is $c>0$ such that for all $t \in (0,1/\Phi(x_0))$ and $|x| \leq \theta h^{-1}(1/t)$,
	\[
	p \big( t,x+tb_{h^{-1}(1/t)} \big) \geq c \big( h^{-1}(1/t) \big)^{-1}.
	\]
	Since by Propositions \ref{prop:7} and \ref{prop:8}, we have $h^{-1}(1/t) \approx 1/\Phi^{-1}(1/t)$ for all $t \in (0,1/\Phi(x_0))$, it suffices to prove that
	\[
	\Big| t\vphi' \big( \Phi^{-1}(M/t) \big) + tb_{h^{-1}(1/t)} \Big| \leq \frac{c_1}{\Phi^{-1}(1/t)},
	\]
	for some $c_1>0$. To this end observe that for $\lambda_1,\lambda_2>0$ we have
	\begin{align*}
	\big| \vphi'(\lambda_1) + b_{\lambda_2} \big| &= \bigg| \int_0^{\infty} s \big( \ind{s<\lambda_2}-e^{-\lambda_1 s} \big) \,\nu({\rm d}s) \bigg| \lesssim \lambda_1 \int_0^{\lambda_2} s^2\,\nu({\rm d}s) + \int_{\lambda_2}^{\infty} se^{-\lambda_1 s}\,\nu({\rm d}s) \\ &\lesssim \lambda_1\lambda_2^2  K(\lambda_2) + \lambda_1^{-1}h(\lambda_2).
	\end{align*}
	Now put $\lambda_1 = \Phi^{-1}(M/t)$ and $\lambda_2 = h^{-1}(1/t)$. Then using again Propositions \ref{prop:7} and \ref{prop:8} we infer that
	\[
	\big| \vphi'(\lambda_1) + b_{\lambda_2} \big| \lesssim \frac{1}{t\Phi^{-1}(1/t)},
	\]
	and the proof is completed.
\end{proof}

Now we treat the right tail of the transition density. In general, based on results concerning various kinds of L\'{e}vy processes, we expect the decay to be expressed in terms of L\'{e}vy measure. For instance, in the case of unimodal L\'{e}vy processes satisfying some scaling conditions it is known \cite[Theorem 21 and Corollary 23]{BGR14} that
\[
p(t,x) \approx p(t,0) \wedge t\nu(x).
\]
In the non-symmetric case, a similar right tail decay is displayed by transition densities of subordinators (see e.g. \cite{GLT2018}). This is also the case for spectrally one-sided L\'{e}vy processes, as the following lemma states. See also Theorem \ref{thm:4} and Remark \ref{rem:5} for comments on two-sided estimate.
  
\begin{lemma}\label{lem:4}
	Let $\bfX$ be a spectrally positive L\'{e}vy process of infinite variation with Laplace exponent $\vphi$. Suppose that $\sigma=0$ and $\vphi'' \in \WLSC{\alpha-2}{c}{x_0}$ for some $x_0 \geq 0$, $c \in (0,1]$ and $\alpha > 1$. We also assume that the L\'{e}vy measure $\nu(dx)$ has an almost monotone density $\nu(x)$. Then the probability distribution of $\bfX$ has a density $p(t,\cdot)$. Moreover, there are $M_0>1$, $\rho_0>0$ and $C>0$ such that for all $t \in (0,1/\Phi(x_0))$ and
	\[
	x \geq \frac{\rho_0}{\Phi^{-1}(1/t)},
	\]
	we have
	\[
	p(t,x) \geq Ct\nu(x).
	\]
\end{lemma}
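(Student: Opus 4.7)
The plan is to combine the interior lower bound of Lemma \ref{lem:3} with a one-big-jump argument, executed via the Chapman--Kolmogorov identity. Throughout, set $r := 1/\Phi^{-1}(1/t)$. First, the semigroup property yields
\[
p(t,x) = \int_{\RR} p(t/2, y)\, p(t/2, x-y)\, dy,
\]
and Lemma \ref{lem:3} at time $t/2$ (with $M = M_0$ and constants $\rho_1, \rho_2$ to be fixed) gives $p(t/2, y) \gtrsim 1/r$ for $y$ in the interval $[c_t - \rho_1 r, c_t + \rho_2 r]$, where $c_t := -(t/2)\,\vphi'(\Phi^{-1}(2M_0/t))$. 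A scaling computation based on Proposition \ref{prop:5}, Proposition \ref{prop:9} and the scaling of $\Phi^{-1}$ (Proposition \ref{prop:8}), combined with the hypothesis $\alpha > 1$, shows $|c_t| \leq C_1 r$ for some constant $C_1$: indeed, $\vphi'(\lambda_0) \lesssim \vphi(\lambda_0)/\lambda_0$ while $\alpha > 1$ forces $\vphi(\lambda_0) \approx \Phi(\lambda_0)$ at $\lambda_0 = \Phi^{-1}(2M_0/t) \approx 1/r$, so $t\vphi'(\lambda_0) \lesssim 1/\lambda_0 \approx r$. Fixing $\rho_1 = \rho_2 = L + C_1$ for a constant $L > 0$ to be determined, and substituting $u = x - y$, the claim reduces to
\[
\PP\bigl( X_{t/2} \in [x - Lr,\, x + Lr] \bigr) \gtrsim t r \, \nu(x)
\]
for all $x \geq \rho_0 r$ with $\rho_0$ depending on $L$.

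For this probability, I would apply the L\'evy--It\^o splitting $X = Y + Z$ into independent L\'evy processes, where $Y$ has L\'evy measure $\nu|_{(0,r)}$ (with drift absorbed so that $X \stackrel{d}{=} Y + Z$) and $Z$ is compound Poisson with measure $\nu|_{[r,\infty)}$. Its jump intensity $\lambda := \nu([r,\infty))$ satisfies $\lambda \leq h(r) \lesssim \Phi^*(1/r) \approx 1/t$ by Proposition \ref{prop:6}, Corollary \ref{cor:4} and Remark \ref{rem:2}, so $e^{-t\lambda/2} \gtrsim 1$. Restricting to the event that $Z$ experiences exactly one jump in $[0, t/2]$ of size $w \in [x - Lr/2, x]$ and that $Y_{t/2} \in [-Lr/2, Lr/2]$, independence and almost monotonicity of $\nu$ yield
\begin{align*}
\PP\bigl( X_{t/2} \in [x - Lr, x + Lr] \bigr) &\gtrsim t \, \nu\bigl( [x - Lr/2, x] \bigr) \, \PP\bigl( Y_{t/2} \in [-Lr/2, Lr/2] \bigr) \\
&\gtrsim t L r \, \nu(x) \, \PP\bigl( Y_{t/2} \in [-Lr/2, Lr/2] \bigr)
\end{align*}
for $x \geq \rho_0 r$ with $\rho_0 \geq L$.

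The $Y$-factor will be handled by Chebyshev's inequality. The variance is $\mathrm{Var}(Y_{t/2}) = (t/2) \int_0^r s^2 \, \nu(ds) = (t/2) r^2 K(r) \approx r^2$ by Corollary \ref{cor:4}. The crux --- and main obstacle --- is the drift estimate $|\EE Y_{t/2}| \leq C_2 r$. Since the drift of $Y$ equals $b - \int_r^1 s \, \nu(ds)$ (for $r < 1$, which holds for small $t$), integration by parts combined with Proposition \ref{prop:6}, Corollary \ref{cor:4} and Remark \ref{rem:2} yields
\[
\int_r^1 s \, \nu(ds) \leq r \, \nu([r, \infty)) + \int_r^1 \nu([s, \infty)) \, ds \lesssim r \, \Phi(1/r) + \int_r^1 \Phi(1/s) \, ds.
\]
After substituting $u = 1/s$ and using the weak lower scaling of $\Phi$, one obtains $\int_1^{1/r} \Phi(u)/u^2 \, du \lesssim \Phi(1/r)/(\alpha - 1) = 1/(t(\alpha - 1))$; the hypothesis $\alpha > 1$ is essential here, since for $\alpha = 1$ an extra logarithmic factor $\log(1/r)$ appears. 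Hence $|\EE Y_{t/2}| \lesssim |b| t + r \lesssim r$, and for $L$ chosen large enough Chebyshev's inequality gives $\PP(Y_{t/2} \in [-Lr/2, Lr/2]) \geq 1/2$. Combining all the estimates yields $p(t, x) \gtrsim t \, \nu(x)$, as claimed.
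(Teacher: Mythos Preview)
Your argument follows a genuinely different route from the paper's. The paper does \emph{not} use Chapman--Kolmogorov; instead it splits the L\'evy measure as $\nu=\nu_1+\nu_2$ with $\nu_1=\tfrac12\nu\,\ind{[\lambda,\infty)}$ (half of the large jumps) and $\nu_2=\nu-\nu_1$, so that $\nu_2\geq\tfrac12\nu$ everywhere. This is the key trick: $\vphi_2''\approx\vphi''$, hence the full strength of Lemma~\ref{lem:3} applies directly to $X^{(2)}$ at time $t$, yielding $\int_0^{\lambda} p^{(2)}(t,y)\,dy\gtrsim 1$. Convolving with the one--jump lower bound $P_t^{(1)}(dy)\geq te^{-t\nu_1(\RR)}\nu_1(y)\,dy$ and almost monotonicity of $\nu$ finishes the proof. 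Your approach replaces this by an elementary Chebyshev estimate for the truncated process $Y$; this is more hands--on but trades the black box of Lemma~\ref{lem:3} (for $X^{(2)}$) for explicit moment bounds.

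There is, however, a genuine gap in your drift step. You claim $|\EE Y_{t/2}|\lesssim |b|t+r\lesssim r$, but $|b|t\lesssim r$ is false in general when $x_0=0$. Indeed $r=1/\Phi^{-1}(1/t)$, and by \eqref{eq:17} one has $t/r=t\,\Phi^{-1}(1/t)\to\infty$ as $t\to\infty$; already for the spectrally positive $\alpha$--stable process ($\Phi(u)=u^{\alpha}$, $\alpha>1$) one gets $t/r=t^{(\alpha-1)/\alpha}\to\infty$, while $b=-\int_1^{\infty}s\,\nu(ds)\neq 0$. The same issue resurfaces in your bound on $|c_t|$, where ``$\vphi(\lambda_0)\approx\Phi(\lambda_0)$'' tacitly invokes Corollary~\ref{cor:2}, which needs $\vphi'(\theta_1)=0$ --- not part of the hypotheses of Lemma~\ref{lem:4}. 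The paper sidesteps both problems by declaring $b=0$ without loss of generality at the outset; with that reduction your Chebyshev argument goes through (and the centering estimate for $c_t$ follows from Proposition~\ref{prop:3} applied to $\vphi$ itself), but as written the large--$t$, $x_0=0$ regime is not covered.
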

\begin{proof}
	Without loss of generality we may and do assume that $b=0$. Let $\lambda>0$. We decompose the L\'{e}vy measure $\nu(dx)$ as follows: let $\nu_1(dx)=\nu_1(x)\, d x$ and $\nu_2(dx)=\nu_2(x)\, d x$, where
	\begin{equation}\label{eq:67}
	\nu_1(x) = \frac12\nu(x) \ind{[\lambda,\infty)}(x) \qquad \text{and} \qquad \nu_2(x)=\nu(x)-\nu_1(x).
	\end{equation}
	For $u>0$ we set
	\begin{align}
	\vphi_1(u) &= \int_0^{\infty} \big( e^{-us}-1\big)\nu_1(s) \, d s, \nonumber \\
	\vphi_2(u) &= \int_0^{\infty} \big( e^{-us}-1+us\ind{s<1} \big)\nu_2(s)\, d s + u \int_0^1 s\nu_1(s)\, d s = \vphi(u)-\vphi_1(u).  \label{eq:66}
	\end{align}
	Let $\bfX^{(j)}$ be a spectrally positive L\'{e}vy processes having the Laplace exponent $\vphi_j$, $j \in \{1,2\}$. First we observe that $\tfrac12 \nu \leq \nu_2 \leq \nu$, thus, for every $u>0$,
	\[
	\frac12 \vphi''(u) \leq \vphi_2''(u) \leq \vphi''(u),
	\]
	and consequently,
	\begin{equation}\label{eq:68}
	\frac12 \Phi(u) \leq \Phi_2(u) \leq \Phi(u).
	\end{equation}
	In particular, since $\phi'' \in \WLSC{\alpha-2}{c}{x_0}$, by Theorem \ref{thm:1}, random variables $X_t$ and $X_t^{(2)}$ are absolutely continuous for all $t>0$. By $p(t,\cdot)$ and $p^{(2)}(t,\cdot)$ we denote its densities. Observe that $\bfX^{(1)}$ is in fact a compound Poisson process. If we denote its probability distribution by $P_t^{(1)}(dx)$, then by \cite[Remark 27.3]{Sato},
	\begin{equation}\label{eq:73}
	P_t^{(1)}(dx) \geq te^{-\nu_1(\RR)}\nu_1(x)\, d x.
	\end{equation}
	Note that if $\lambda \geq c_1/\Phi^{-1}(1/t)$ for some $c_1>0$, then by \eqref{eq:44},
	\begin{align}
	t\nu_1(\RR) &= \frac12t\int_{\lambda}^{\infty} \nu(x)\, d x \leq \frac12th\big( c_1/\Phi^{-1}(1/t) \big) \lesssim t h \big( 1/\Phi^{-1}(1/t) \big) \lesssim 1, \label{eq:74}
	\end{align}
	where in the third line we use \cite[Lemma 2.1]{GS2019}.	
	
	Now, denote
	\[
	x_t = -t\vphi_2' \big( \Phi_2^{-1}(M_0/t) \big),
	\]
	where $M_0$ is taken from Lemma \ref{lem:3}. We claim that there is $\rho_0>0$ such that for all $t \in (0,1/\Phi(x_0))$,
	\begin{equation}\label{eq:65}
		\frac{\rho_0}{\Phi^{-1}(1/t)}\geq -x_t.
	\end{equation}
	Indeed, observe that by \eqref{eq:68}, for any $s>0$,
	\begin{equation}\label{eq:59}
	\Phi_2^{-1}(s) \geq \Phi^{-1}(s).
	\end{equation}
	Thus, using Proposition \ref{prop:3} and monotonicity of $\Phi^{-1}$ we conclude that there is $c_2>0$ such that
	\[
	t\vphi_2' \big( \Phi_2^{-1}(M_0/t) \big) \leq c_2t \frac{M_0}{t} \frac{1}{\Phi_2^{-1}(M_0/t)} \leq \frac{c_2M_0}{\Phi^{-1}(M_0/t)} \leq \frac{c_2M_0}{\Phi^{-1}(1/t)},
	\]
	and \eqref{eq:65} follows with $\rho_0=c_2M_0$.
	
	Now, we apply Lemma \ref{lem:3} for $\bfX^{(2)}$. For all $\rho>0 $ there is $C>0$ such that for all $t \in (0,1/\Phi_2(x_0))$ and $x \in \RR$ satisfying
	\[
	x_t-\frac{\rho}{\Phi_2^{-1}(1/t)} \leq x \leq x_t+\frac{\rho}{\Phi_2^{-1}(1/t)},
	\]
	we have
	\[
	p^{(2)}(t,x) \geq C \Phi_2^{-1}(1/t).
	\]
	Note that if $x_0>0$ then we may easily extend the above for $t \in (0,1/\Phi(x_0))$. Let $\rho_0$ be taken from \eqref{eq:65} and set $\lambda = x_t+\frac{\rho}{\Phi_2^{-1}(1/t)}$,
	where $\rho=\frac32 \rho_0$. Then it follows that $\lambda \geq \frac12 \frac{\rho_0}{\Phi_2^{-1}(1/t)}$,
	and consequently,
	\begin{equation}\label{eq:75}
	\int_0^{\lambda} p^{(2)} (t,x) \, d x \gtrsim 1.
	\end{equation}
	Thus, using \eqref{eq:73} and \eqref{eq:74}, for $x \geq 2\lambda$ we have
	\begin{align*}
	p (t,x) &= \int_{\RR} p^{(2)}(t,x-y) P_t^{(1)}(dy) \gtrsim t\int_{\RR} p^{(2)}(t,x-y)\nu_1(y) \, d y \geq \frac12 t\int_{\lambda}^x p^{(2)} (t,x-y)\nu(y)\, d y.
	\end{align*}
	Finally, using almost monotonicity of $\nu$ and \eqref{eq:75} we get
	\begin{align*}
	p (t,x) &\gtrsim t\nu(x) \int_0^{\lambda} p^{(2)} (t,y) \, d y \gtrsim t\nu(x). 
	\end{align*}
	Now it remains to observe that by \eqref{eq:66}, for any $u>0$,
	\[
	\vphi_2'(u) \geq \vphi'(u).
	\]
	Thus, by \eqref{eq:59},
	\begin{align*}
	\lambda = -t\vphi_2' \big( \Phi_2^{-1}(M_0/t) \big) + \frac{\rho}{\Phi^{-1}(1/t)} \leq -t\vphi' \big( \Phi^{-1}(M_0/t) \big) + \frac{\rho}{\Phi^{-1}(1/t)}.
	\end{align*}
	The proof is completed.
\end{proof}

\section{Sharp two-sided estimates}\label{sec:sharp}
This Section is devoted to derivation of sharp two-sided estimates. As mentioned in the Introduction, we will require here the upper scaling condition as well, in order to express the L\'{e}vy density in terms of Laplace exponent $\vphi$. First, however, thanks to strict separation from the limit case $\alpha=1$, we are able to provide simpler expression for the localization of $\sup_{x \in \RR} p(t,x)$.
\begin{theorem}\label{thm:3}
	Let $\bfX$ be a spectrally positive L\'{e}vy process of infinite variation with Laplace exponent $\vphi$. Suppose that $\sigma=0$ and $\vphi \in \WLSC{\alpha}{c}{x_0}$ for some $c \in (0,1]$, $x_0 \geq 0$ and $\alpha > 1$. We assume also that $\vphi'(\theta_1)=0$. Then for all $-\infty<\chi_1<\chi_2<\infty$ there is $C>0$ such that for all $t \in (0,1/\Phi(x_0 \vee 2\theta_1))$ and $x \in \RR$ satisfying
	\begin{equation}\label{eq:64}
	\chi_1 < x\vphi^{-1}(1/t) < \chi_2,
	\end{equation}
	we have
	\[
	C^{-1}\vphi^{-1}(1/t) \leq p(t,x) \leq C\vphi^{-1}(1/t).
	\]
\end{theorem}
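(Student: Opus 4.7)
The plan is to split the claim into a uniform upper bound and a matched lower bound, and to verify that both the shift arising in Lemma \ref{lem:3} and the width of the admissible window (\ref{eq:64}) are of the common order $1/\vphi^{-1}(1/t)$. Throughout, the standing hypotheses $\vphi \in \WLSC{\alpha}{c}{x_0}$ with $\alpha > 1$ and $\vphi'(\theta_1) = 0$ yield, via Corollary \ref{cor:2}, the equivalent $\vphi'' \in \WLSC{\alpha-2}{c'}{x_0}$, so the estimates of Section \ref{sec:estimates} apply; Proposition \ref{prop:10} lets me freely interchange $\Phi$ with $\vphi$ and $\Phi^{-1}$ with $\vphi^{-1}$ on the relevant range.

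The upper bound is essentially automatic. Theorem \ref{thm:2} gives $p(t, y + tb_{1/\psi^{-1}(1/t)}) \leq C \Phi^{-1}(1/t)$ uniformly in $y \in \RR$, and after translation this becomes $p(t, x) \leq C \Phi^{-1}(1/t) \leq C' \vphi^{-1}(1/t)$ for every $x \in \RR$, with no restriction on $x$ needed.

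For the lower bound I intend to apply Lemma \ref{lem:3} with $M = M_0$ and constants $\rho_1, \rho_2 > 0$ to be selected at the end. That lemma delivers $p(t,x) \geq C \Phi^{-1}(1/t)$ on an interval of length $(\rho_1 + \rho_2)/\Phi^{-1}(1/t)$ centred at $x^{*} := -t \vphi'(\Phi^{-1}(M_0/t))$, so the crux is to control $|x^{*}|$. Writing $w := \Phi^{-1}(M_0/t)$, Proposition \ref{prop:5} gives $\vphi'(w) \approx \vphi(w)/w$, Proposition \ref{prop:10} combined with Remark \ref{rem:2} gives $\vphi(w) \approx \Phi^{*}(w) = M_0/t$, and the weak upper scaling of $\vphi^{-1}$ inherited from $\vphi \in \WLSC{\alpha}{c}{x_0}$ yields $w \approx \vphi^{-1}(M_0/t) \approx \vphi^{-1}(1/t)$. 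Multiplying through,
\[
    |x^{*}| \approx t \cdot \frac{\vphi(w)}{w} \approx \frac{M_0}{\vphi^{-1}(1/t)}.
\]
Consequently the interval from Lemma \ref{lem:3} lies inside one of the form $[-K_1/\vphi^{-1}(1/t),\, K_2/\vphi^{-1}(1/t)]$ with $K_1, K_2$ depending on $M_0, \rho_1, \rho_2$; choosing $\rho_1, \rho_2$ large enough in terms of $|\chi_1|, |\chi_2|$ and $M_0$, the target window $(\chi_1/\vphi^{-1}(1/t), \chi_2/\vphi^{-1}(1/t))$ sits inside it, and the desired $p(t,x) \geq C \Phi^{-1}(1/t) \geq C'' \vphi^{-1}(1/t)$ follows.

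The main obstacle is the harmonisation of admissible $t$-ranges: Proposition \ref{prop:10}'s conversion between $\Phi$ and $\vphi$ is valid for $t < 1/\Phi(x_0 \vee 2\theta_0)$, whereas the statement targets $t < 1/\Phi(x_0 \vee 2\theta_1)$. Under the hypothesis $\vphi'(\theta_1) = 0$ these thresholds coincide except when $\theta_0 > \theta_1$, i.e.\ $\EE X_1 > 0$; on the resulting compact corridor $[1/\Phi(x_0 \vee 2\theta_0),\, 1/\Phi(x_0 \vee 2\theta_1))$ all relevant quantities (namely $\vphi^{-1}(1/t)$ and $p(t, \cdot)$ on the bounded $x$-range prescribed by (\ref{eq:64})) are continuous and bounded away from $0$ and $\infty$, so this corridor is absorbed into the constant $C$ by a compactness argument, and the conclusion extends to the full stated range.
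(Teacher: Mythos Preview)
Your overall strategy matches the paper's: both the paper and you obtain the lower bound by applying Lemma~\ref{lem:3}, estimating the shift $x_t=-t\vphi'(\Phi^{-1}(M/t))$ as $\approx 1/\vphi^{-1}(1/t)$ via Propositions~\ref{prop:5}, \ref{prop:3} and \ref{prop:10}, and then choosing $\rho_1,\rho_2$ large enough to swallow the target window. The $t$-range discussion you add is reasonable and essentially a compactness patch the paper leaves implicit.

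There is, however, a genuine gap in your upper bound. You invoke Theorem~\ref{thm:2} to get $p(t,x)\leq C\Phi^{-1}(1/t)$, but Theorem~\ref{thm:2} carries the extra hypothesis that $\nu(dx)$ has an almost monotone density $\nu(x)$, which is \emph{not} assumed in Theorem~\ref{thm:3}. So as written your upper bound is not justified under the stated hypotheses. The paper avoids this by citing the cruder supremum bound \cite[Theorem~3.1]{GS2019}: the lower scaling of $\psi^*$ together with \eqref{eq:48} gives $\int_{\RR}e^{-t\Re\psi(\xi)}\,d\xi\lesssim\psi^{-1}(1/t)$, hence $\sup_x p(t,x)\lesssim\psi^{-1}(1/t)\approx\Phi^{-1}(1/t)\approx\vphi^{-1}(1/t)$ via Propositions~\ref{prop:7}, \ref{prop:8} and \ref{prop:10}. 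This requires only the scaling assumption, not any structure on $\nu$. Replace your appeal to Theorem~\ref{thm:2} with this Fourier-side argument and the proof goes through.
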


\begin{proof}
	First, let us note that by Proposition \ref{prop:10}, there is $C' \geq 1$ such that for all $r \in (0,1/\Phi(x_0 \vee 2\theta_0))$,
	\begin{equation}\label{eq:76}
	C'^{-1} \Phi^{-1}(r) \leq \vphi^{-1}(r) \leq C' \Phi^{-1}(r).
	\end{equation}
	Thus, in view of \cite[Theorem 3.1]{GS2019} and Propositions \ref{prop:7} and \ref{prop:8}, it is enough to prove the first inequality in \eqref{eq:64}. Next, we observe that assumptions of Lemma \ref{lem:3} are satisfied. Let $M_0$ be taken from Lemma \ref{lem:3}; for fixed $M>M_0$ and $t \in (0,1/\Phi(x_0 \vee 2\theta_1))$ we set
	\[
	x_t = -t\vphi' \big( \Phi^{-1}(M/t) \big).
	\]
	By Propositions \ref{prop:5} and \ref{prop:8}, and \eqref{eq:76}, there is $c_1 \in (0,1]$ such that
	\[
	t\vphi' \big( \Phi^{-1}(M/t) \big) \geq \frac{c_1}{\vphi^{-1}(1/t)}.
	\]
	Furthermore, by Proposition \ref{prop:3}, \eqref{eq:17} and \eqref{eq:76}, there is $C_1 \geq 1$ such that
	\[
	t\vphi' \big( \Phi^{-1}(M/t) \big) \leq \frac{C_1}{\vphi^{-1}(1/t)}.
	\]
	Now we apply Lemma \ref{lem:3}. Pick $\rho_1$ and $\rho_2$ so that
	\[
	-c_1-\frac{\rho_1}{C'} \leq \chi_1 \qquad \text{and} \qquad -C_1+\frac{\rho_2}{C'} \geq \chi_2.
	\]
	Then is is clear that
	\[
	\bigg[ \frac{\chi_1}{\vphi^{-1}(1/t)}, \frac{\chi_2}{\vphi^{-1}(1/t)}\bigg] \subset \bigg( x_t-\frac{\rho_1}{\Phi^{-1}(1/t)}, x_t+\frac{\rho_2}{\Phi^{-1}(1/t)} \bigg).
	\]
	Hence, by Lemma \ref{lem:3} and \eqref{eq:76}, for all $t \in (0,1/\Phi(x_0 \vee 2\theta_0))$ and $x \in \RR$ satisfying
	\[
	\chi_1 \leq x\vphi^{-1}(1/t) \leq \chi_2
	\]
	we have
	\[
	p(t,x) \gtrsim \vphi^{-1}(1/t),
	\]
	and the theorem follows.
\end{proof}
Proceeding exactly as in the proof of \cite[Proposition 4.15]{GLT2018} and applying Corollary \ref{cor:2} yields the following.
\begin{proposition}\label{prop:11}
	Assume that the L\'{e}vy measure $\nu(dx)$ has an almost monotone density $\nu(x)$. Suppose that $\vphi'(\theta_1)=0$ and $\vphi \in \WLSC{\alpha}{c}{x_0} \cap \WUSC{\beta}{C}{x_0}$ for some $c \in (0,1]$, $C \geq 1$, $x_0 \geq \theta_0$ and $1 < \alpha \leq \beta <2$. Then there is $c' \in (0,1]$ such that for all $0<x<x_0^{-1} \wedge (2\theta_0)^{-1}$,
	\[
	\nu(x) \geq c'x^{-1}\vphi(1/x).
	\]
\end{proposition}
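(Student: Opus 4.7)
The plan is to convert the integral identity relating the truncated second moment of $\nu$ to $\vphi''$ into a pointwise lower bound on $\nu$, by combining almost monotonicity of $\nu$ with the strict upper scaling $\beta<2$ on a suitably chosen $K$-adic annulus.

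First I would combine Corollary \ref{cor:4} (in the case $\sigma=0$) with Corollary \ref{cor:2} --- available because $\vphi \in \WLSC{\alpha}{c}{x_{0}}$ and $\alpha>1$ --- to obtain
\[
\int_{0}^{r} s^{2}\nu(s)\,d s \ \approx \ \vphi''(1/r) \ \approx \ r^{2}\vphi(1/r), \qquad 0<r<x_{0}^{-1}\wedge(2\theta_{0})^{-1},
\]
the assumption $x_{0}\geq \theta_{0}$ packaging the cutoff cleanly. Let $c_{1}\leq c_{2}$ denote the implied constants. Next, choose an integer $K\geq 2$ so large that $c_{1}C^{-1}K^{2-\beta} \geq 2c_{2}$, which is possible precisely because $\beta<2$. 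Then for $x$ with $Kx$ still in the above range, writing $\int_{x}^{Kx}=\int_{0}^{Kx}-\int_{0}^{x}$ and applying $\vphi(1/(Kx))\geq C^{-1}K^{-\beta}\vphi(1/x)$ (the upper scaling at $1/(Kx)>x_{0}$) yields
\[
\int_{x}^{Kx} s^{2}\nu(s)\,d s \ \geq \ \bigl(c_{1}C^{-1}K^{2-\beta}-c_{2}\bigr)\, x^{2}\vphi(1/x) \ \gtrsim \ x^{2}\vphi(1/x).
\]

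Interpreting ``$\nu$ has an almost monotone density'' in the almost-decreasing sense, as in Theorem \ref{thm:2}, there is $C'\geq 1$ with $\nu(s)\leq C'\nu(x)$ for $x\leq s\leq Kx$; hence
\[
\int_{x}^{Kx} s^{2}\nu(s)\,d s \ \leq \ C'\nu(x)\int_{x}^{Kx}s^{2}\,d s \ \leq \ \tfrac{1}{3}C'K^{3}\,x^{3}\nu(x).
\]
Chaining the two displays yields $\nu(x)\gtrsim x^{-1}\vphi(1/x)$ on the shrunken interval $0<x<K^{-1}\bigl(x_{0}^{-1}\wedge(2\theta_{0})^{-1}\bigr)$. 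To recover the full range $0<x<x_{0}^{-1}\wedge(2\theta_{0})^{-1}$, a compactness extension suffices: on the remaining compact piece $\nu$ is bounded below by almost monotonicity anchored at the left endpoint (where the bound is already established), while $x^{-1}\vphi(1/x)$ stays bounded above by continuity of $\vphi$.

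The main obstacle is the choice of $K$: the inequality $c_{1}C^{-1}K^{2-\beta}>c_{2}$ is exactly where $\beta<2$ is used essentially, since without it the $K$-dependent term no longer dominates the additive error and the extraction of the annular contribution collapses. The remaining bookkeeping --- tracking the intervals of validity in Corollaries \ref{cor:4} and \ref{cor:2}, and patching by compactness --- is routine given the hypothesis $x_{0}\geq\theta_{0}$.
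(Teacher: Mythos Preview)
Your core argument is exactly what the paper invokes: it cites Proposition~4.15 of \cite{GLT2018}, whose method is precisely the annular extraction you describe---bound $\int_x^{Kx}s^{2}\nu(s)\,ds$ from below using the strict upper scaling $\beta<2$, then pull out $\nu(x)$ via almost monotonicity---with Corollary~\ref{cor:2} supplying the passage from $\vphi''$ to $\vphi$. On the substantive part, your proposal and the referenced proof coincide.

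The one weak spot is the compactness extension. You claim that on the leftover piece $[K^{-1}a,a)$ (with $a=x_{0}^{-1}\wedge(2\theta_{0})^{-1}$) the density $\nu$ is bounded below ``by almost monotonicity anchored at the left endpoint''. But almost \emph{decreasing} gives $\nu(y)\leq C'\nu(x)$ for $y\geq x$: it controls $\nu$ at larger arguments \emph{from above} in terms of smaller ones, never from below. Knowing $\nu$ is positive on $(0,K^{-1}a)$ therefore does not propagate rightward; a compactly supported density such as $\nu(x)=x^{-\gamma-1}\ind{(0,1)}(x)$ with $\gamma\in(1,2)$ shows that almost monotonicity alone cannot manufacture the lower bound you need past the support. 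The standard repair---used repeatedly elsewhere in the paper---is to first shrink the scaling threshold $x_{0}$ (positivity and continuity of $\vphi$ always permit this at the cost of worse constants) so that the $K$-contracted interval already covers the target range and no after-the-fact patching is required. In the only case where the proposition is actually applied (Theorem~\ref{thm:4}, where $\theta_{0}=0$), this is unproblematic.
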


Now we are ready to prove our main result of this Section.
\begin{theorem}\label{thm:4}
	Let $\bfX$ be a spectrally positive L\'{e}vy process of infinite variation with the Laplace exponent $\vphi$ such that $\theta_1=0$ and $\vphi'(0)=0$. Suppose that $\sigma=0$ and $\vphi \in \WLSC{\alpha}{c}{x_0} \cap \WUSC{\beta}{C}{x_0}$ for some $c \in (0,1]$, $C \geq 1$, $x_0 \geq 0$, and $1<\alpha \leq \beta < 2$. We also assume that the L\'{e}vy measure $\nu(dx)$ has an almost monotone density $\nu(x)$. Then there is $x_1 \in (0,\infty]$ such that for all $t \in (0,1/\Phi(x_0))$ and $x \in (-\infty,x_1)$,
	\begin{align*}
	p(t,x) \approx \left\lbrace \begin{array}{lll}
	\big( t\vphi''(w) \big)^{-\frac12} \exp \big\lbrace -t \big( w\vphi'(w)-\vphi(w) \big) \big\rbrace, & \text{if } & x\vphi^{-1}(1/t) \leq -1,\\
	\vphi^{-1}(1/t), & \text{if } & -1 < x\vphi^{-1}(1/t) \leq 1,\\ 
	tx^{-1}\vphi(1/x), & \text{if } & x\vphi^{-1}(1/t)>1.
	\end{array}	\right.
	\end{align*}
	where $w = (\vphi')^{-1}(-x/t)$. If $x_0=0$ then $x_1=\infty$.
\end{theorem}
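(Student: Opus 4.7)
I would partition the admissible region into three subregions according to the magnitude and sign of $x\vphi^{-1}(1/t)$, and apply a different earlier result in each. Specifically, let $K>1$ be a large constant to be chosen. For $\lvert x\vphi^{-1}(1/t)\rvert\leq K$ I would use Theorem \ref{thm:3}; for $x\vphi^{-1}(1/t)<-K$ I would use Corollary \ref{cor:6}; and for $x\vphi^{-1}(1/t)>K$ I would combine Theorem \ref{thm:2} with Lemma \ref{lem:4}. Since $K$ is arbitrary, once the three estimates are shown to be mutually comparable on the overlaps $1\leq \lvert x\vphi^{-1}(1/t)\rvert \leq K$, the statement with the thresholds $\pm 1$ follows by enlarging the multiplicative constant in $\approx$.

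\textbf{Middle and negative tail.} The middle regime $\lvert x\vphi^{-1}(1/t)\rvert \leq K$ is handled directly by Theorem \ref{thm:3} with $\chi_1=-K$ and $\chi_2=K$; its hypotheses are precisely ours (noting that $\theta_1=0$ and $\vphi'(0)=0$ together with convexity of $\vphi$ force $\theta_0=0$). For $x\vphi^{-1}(1/t)<-K$, I would verify the hypotheses of Corollary \ref{cor:6}: taking $K\geq M$ secures the condition $-x\vphi^{-1}(1/t)>M$, while $t\vphi(x_0\vee 2\theta_0)\leq 1$ follows from $t<1/\Phi(x_0)$ combined with Corollary \ref{cor:2} (which gives $\Phi(x_0)\approx\vphi(x_0)$). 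To guarantee consistency at $x\vphi^{-1}(1/t)\approx -1$, note that for $w=(\vphi')^{-1}(-x/t)$, Propositions \ref{prop:5} and \ref{prop:3} yield $w\approx \vphi^{-1}(1/t)$; then Corollary \ref{cor:2} gives $t\vphi''(w)\approx \bigl(\vphi^{-1}(1/t)\bigr)^{-2}$ and $t\bigl(w\vphi'(w)-\vphi(w)\bigr)\approx 1$, so the asymptotic expression collapses to a constant multiple of $\vphi^{-1}(1/t)$ and agrees with the middle regime on the overlap.

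\textbf{Positive tail.} For the lower bound on $x\vphi^{-1}(1/t)>K$, Lemma \ref{lem:4} gives $p(t,x)\geq Ct\nu(x)$ once $x\geq \rho_0/\Phi^{-1}(1/t)$, which is arranged by enlarging $K$ using $\Phi^{-1}(1/t)\approx \vphi^{-1}(1/t)$ from Proposition \ref{prop:10}. Proposition \ref{prop:11} then yields $\nu(x)\gtrsim x^{-1}\vphi(1/x)$ for $0<x<x_0^{-1}$: this is precisely where the cutoff enters, giving $x_1=x_0^{-1}$ if $x_0>0$ and $x_1=\infty$ if $x_0=0$. For the upper bound, Theorem \ref{thm:2} together with Remark \ref{rem:4} gives $p\bigl(t,y+tb_{1/\Phi^{-1}(1/t)}\bigr)\lesssim t\eta(\lvert y\rvert)$. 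Using the identity $b_r=-\int_{[r,\infty)} s\,\nu(ds)$ (valid since $\vphi'(0)=0$) together with $\nu(s)\lesssim \eta(s)\approx s^{-1}\vphi(1/s)$ and the scaling of $\vphi$, one checks that $\lvert tb_{1/\Phi^{-1}(1/t)}\rvert \lesssim 1/\vphi^{-1}(1/t)$; hence on the region $x\vphi^{-1}(1/t)>K$ with $K$ large, the shift is negligible compared to $\lvert x\rvert$, and the doubling property of $\eta$ absorbs it. Finally, $\eta(\lvert x\rvert)\approx \lvert x\rvert^{-1}\vphi(1/\lvert x\rvert)$ (by definition of $\eta$ and Proposition \ref{prop:10}) delivers the upper bound $tx^{-1}\vphi(1/x)$. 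Consistency at $x\vphi^{-1}(1/t)\approx 1$ is automatic, since $tx^{-1}\vphi(1/x)\approx t\vphi^{-1}(1/t)\cdot \vphi\bigl(\vphi^{-1}(1/t)\bigr)=\vphi^{-1}(1/t)$.

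\textbf{Main obstacle.} The most delicate step is the control of the translation $tb_{1/\psi^{-1}(1/t)}$ in the upper bound of the right tail: both quantitatively, to see it is dominated by $\lvert x\rvert$ once $K$ is large, and qualitatively, to exploit the doubling property of $\eta$ in order to transport the estimate from the shifted to the unshifted density. Once this bookkeeping is completed the rest of the argument is essentially routine assembly, relying on the systematic dictionary between scaling properties of $\vphi$, $\Phi$, $\psi^*$ and their inverses developed in Sections \ref{sec:prelims} and \ref{sec:estimates}.
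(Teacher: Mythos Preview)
Your proposal is correct and follows essentially the same three-region assembly as the paper: Corollary \ref{cor:6} on the left tail, Theorem \ref{thm:3} in the middle, and Theorem \ref{thm:2}/Lemma \ref{lem:4}/Proposition \ref{prop:11} on the right tail, with overlap checks to pass from the threshold $K$ to the threshold $1$. One simplification you missed: what you flag as the ``main obstacle'' is in fact trivial, since $\vphi'(0)=0$ forces $b_r=-\int_{[r,\infty)}s\,\nu(ds)\leq 0$, so for $x>0$ one has $x-tb_{1/\Phi^{-1}(1/t)}\geq x$ and the \emph{monotonicity} of $\eta$ alone gives $\eta\bigl(x-tb_{1/\Phi^{-1}(1/t)}\bigr)\leq \eta(x)$---no bound on $\lvert tb_r\rvert$ or doubling is needed.
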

\begin{proof}
	Set $x_1=x_0^{-1}$. First we note that in view of Propositions \ref{prop:5} and \ref{prop:3}, $\vphi'' \in \WLSC{\alpha-2}{c}{x_0}$. Hence, by Corollary \ref{cor:6}, for $\chi_1=-M \wedge -1$,
	\begin{equation}\label{eq:83}
	p(t,x) \approx \big( t\vphi''(w) \big)^{\frac12} \exp \big\lbrace -t \big( w\vphi'(w)-\vphi(w) \big) \big\rbrace,
	\end{equation}
	if only $x\vphi^{-1}(1/t)<\chi_1$. In fact, if $\chi_1<-1$, then we also have
	\begin{equation}\label{eq:84}
	\big( t\vphi''(w) \big)^{\frac12} \exp \big\lbrace -t \big( w\vphi'(w)-\vphi(w) \big) \big\rbrace \approx \vphi^{-1}(1/t),
	\end{equation}
	for $\chi_1 \leq x\vphi^{-1}(1/t)\leq-1$. To show this, we first prove the following.
	\begin{claim}\label{clm:3}
		There exist $0<c_1 \leq 1 \leq c_2$ such that for all $t \in (0,c_1/\Phi(x_0))$ and $x \in (-\infty,x_1)$ satisfying
		\[
		\chi_1 \leq x\vphi^{-1}(1/t) \leq -1,
		\]
		we have
		\begin{equation*}
		-t\vphi' \big( \vphi^{-1}(c_2/t)\big) \leq x \leq -t\vphi' \big( \vphi^{-1}(c_1/t)\big).
		\end{equation*}
	\end{claim}
	Indeed, by Proposition \ref{prop:10}, there is $C_1 \geq 1$ such that for all $r>\Phi(x_0 \vee 2\theta_0)$,
	\begin{equation}\label{eq:60}
	C_1^{-1}\Phi^{-1}(r) \leq \vphi^{-1}(r) \leq C_1\Phi^{-1}(r).
	\end{equation}
	Let $c_2 = (-\chi_1C'C_1^2)^{\alpha/(\alpha-1)} \in [1,\infty)$, where $C'$ is taken from \eqref{eq:85}. Then it follows that
	\[
	c_2^{-1}\vphi^{-1}(c_2/t) \leq c_2^{(1-\alpha)/\alpha}C_1^2C' \vphi^{-1}(1/t) = (-\chi_1)^{-1}\vphi^{-1}(1/t).
	\]
	Thus, by Proposition \ref{prop:5},
	\[
	x \geq -\frac{-\chi_1}{\vphi^{-1}(1/t)} \geq - t\frac{\vphi \big( \vphi^{-1}(c_2/t) \big)}{\vphi^{-1}(c_2/t)} \geq  -t\vphi' \big( \vphi^{-1}(c_2/t) \big).
	\]
	Moreover, also by Proposition \ref{prop:5}, with $c_1=C^{-\alpha/(\alpha-1)}$ we have for $t \in (0,c_1/\vphi(x_0))$,
	\[
	t\vphi' \big( \vphi^{-1}(c_1/t) \big) \leq \frac{Cc_1}{\vphi^{-1}(1/t)} \cdot \frac{\vphi^{-1}(1/t)}{\vphi^{-1}(c_1/t)} \leq \frac{Cc_1^{(\alpha-1)/\alpha}}{\vphi^{-1}(1/t)},
	\]
	thus,
	\[
	x \leq -\frac{1}{\vphi^{-1}(1/t)} \leq -t\vphi' \big( \vphi^{-1}(c_1/t) \big),
	\]
	and the Claim follows.

	Now, using Claim \ref{clm:3}, Proposition \ref{prop:8}, \eqref{eq:17} and Proposition \ref{prop:10} we get that for all $t \in (0,c_1/\Phi(x_0))$,
	\begin{equation}\label{eq:86}
	w \approx \vphi^{-1}(1/t).
	\end{equation}
	Hence, in view of Proposition \ref{prop:5}, $tw\vphi'(w) \approx 1$ and consequently,
	\[
	\exp \big\lbrace -t\big( w\vphi'(w)-\vphi(w) \big) \big\rbrace \approx 1.
	\]
	Furthermore, by Proposition \ref{prop:3},
	\[
	\vphi''(w) \approx w\vphi'(w),
	\]
	which, combined with \eqref{eq:86}, yields
	\[
	\frac{1}{\sqrt{t\vphi''(w)}} \approx \frac{w}{\sqrt{w\vphi'(w)}} \approx \vphi^{-1}(1/t),
	\]
	and \eqref{eq:84} follows for $t \in (0,c_1/\Phi(x_0))$.
	
	Next, recall that $\theta_1=0$ and $\vphi'(\theta)=0$. Therefore, in view of \eqref{eq:30}, we in fact have
	\[
	b_r = -\int_r^{\infty} s\nu(s)\,ds.
	\] 
	Now, let $x>1/\vphi^{-1}(1/t)$. By Theorem \ref{thm:2}, Remark \ref{rem:4} and monotonicity of $\eta$,
	\[
	p (t,x) \lesssim t\eta\big(x-tb_{1/\Phi^{-1}(1/t)}\big) \leq t\eta(x).
	\]
	Thus, by Corollary \ref{cor:2}, for all $t \in (0,1/\Phi(x_0))$ and $x \in (0,x_1)$ such that $x\vphi^{-1}(1/t) > 1$,
	\begin{equation}\label{eq:81}
	p(t,x) \lesssim tx^{-1}\vphi(1/x).
	\end{equation}
	Next, by Lemma \ref{lem:4} and Proposition \ref{prop:11}, there are $M_0>0$, $\rho_0>0$ and $c>0$ such that for any $t \in (0,1/\Phi(x_0))$ and $x \in (0,x_1)$ satisfying $x\vphi^{-1}(1/t) \geq \rho_0$ we have
	\begin{equation}\label{eq:79}
	p(t,x) \geq ctx^{-1}\vphi(1/x).
	\end{equation}
	Thus, if we set $\chi_2 = 1 \vee \rho_0$, then by \eqref{eq:81} and \eqref{eq:79}, for all $t \in (0,1/\Phi(x_0))$ and $x \in (0,x_1)$ such that $x\vphi^{-1}(1/t) \geq \chi_2$,
	\begin{equation}\label{eq:82}
	p(t,x) \approx tx^{-1}\vphi(1/x).
	\end{equation}
	Finally, by Theorem \ref{thm:3}, for all $t \in (0,1/\Phi(x_0))$ and $x \in (-\infty,x_1)$ satisfying $\chi_1 < x\vphi^{-1}(1/t) < \chi_2$
	we have
	\begin{equation*}
	p(t,x) \approx \vphi^{-1}(1/t).
	\end{equation*}
	It remains to notice that if $\chi_2>1$, then, by scaling properties of $\vphi$, for all $t \in (0,1/\Phi(x_0))$ and $x \in (0,x_1)$ satisfying $1 \leq x\vphi^{-1}(1/t) \leq \chi_2$
	we have
	\[
	tx^{-1}\vphi(1/x) \approx \vphi^{-1}(1/t),
	\]
	which, combined with \eqref{eq:83}, \eqref{eq:84} and \eqref{eq:82} finishes the proof for the case $x_0=0$. If $x_0>0$ then we can use positivity and continuity to extend the time range from $c_1/\Phi(x_0)$ to $1/\Phi(x_0)$. 
\end{proof}
\begin{remark}\label{rem:5}
		Taking into account \eqref{eq:36}, Corollary \ref{cor:2} and Proposition \ref{prop:11} we may observe that in fact,
		\[
		\nu(x) \approx x^{-1}\vphi(1/x),
		\]
		for all $0<x<x_0^{-1} \wedge (2\theta_0)^{-1}$. Therefore, by inspecting the proof of Theorem \ref{thm:4} one can show that the term $tx^{-1}\vphi(1/x)$ in the thesis may be replaced by $t\nu(x)$.
\end{remark}
\begin{example}
	Let $\bfX$ be a spectrally positive $\alpha$-stable process with the Laplace exponent $\vphi(\lambda)=\lambda^{\alpha}$, where $\alpha > 1$. Then it is clear that we have  $\vphi''(\lambda)=\alpha(\alpha-1)\lambda^{\alpha-2}$ and $(\vphi')^{-1}(y) = \alpha^{-\alpha/(\alpha-1)}y^{-1/(\alpha-1)}$. Consequently, from Theorem \ref{thm:1} we get that the asymptotics of $p(t,x)$ is of the form
	\[
	\frac{1}{\sqrt{2\pi\alpha(1-\alpha)}} \bigg( \frac{-x}{\alpha} \bigg)^{-\alpha/2(\alpha-1)} t^{(2-\alpha)/2(\alpha-1)}\exp \Bigg\{ -(\alpha-1) t^{-1/(\alpha-1)} \bigg( \frac{-x}{\alpha}\bigg)^{-\alpha/(\alpha-1)} \Bigg\},
	\]
	which after setting $t=1$ coincides with \cite[Theorem 2.5.3]{Zolotarev64}. Moreover, by Theorem \ref{thm:4},
	\begin{align*}
	p(t,x) \approx \left\lbrace \begin{array}{lll}
	\frac{1}{\sqrt{2\pi\alpha(1-\alpha)}} \bigg( \frac{-x}{\alpha} \bigg)^{-\frac{\alpha}{2(\alpha-1)}} t^{\frac{2-\alpha}{2(\alpha-1)}}\exp \Bigg\{ -(\alpha-1) t^{-\frac{1}{\alpha-1}} \bigg( \frac{-x}{\alpha}\bigg)^{-\frac{\alpha}{\alpha-1}} \Bigg\}, & \text{if } & \frac{x}{t^{1/\alpha}} \leq -1,\\
	t^{-\frac{1}{\alpha}}, & \text{if } & -1 < \frac{x}{t^{1/\alpha}} \leq 1,\\ 
	\frac{t}{x^{1+\alpha}}, & \text{if } & \frac{x}{t^{1/\alpha}}>1.
	\end{array}	\right.
	\end{align*}
	For $\alpha=1$, in view of \cite[Proposition 1.2.12]{SamorodnitskyTaqqu94} we have $\vphi(\lambda)=\lambda \ln \lambda$. Therefore, $\vphi''(\lambda)=\lambda^{-1}$ and $(\vphi')^{-1}(y)=e^{y-1}$. By Theorem \ref{thm:1} we get the following form of the asymptotics:
	\[
	\frac{1}{\sqrt{2\pi t}} \exp \Bigg\{ -\frac{-x/t-1}{2}-e^{-x/t-1} \Bigg\},
	\]
	which again, after substituting $t=1$ coincides with \cite[Theorem 2.5.3]{Zolotarev64}. Unfortunately, Theorem \ref{thm:4} cannot be applied due to scaling condition with $\alpha=1$ only. Let us also note that for the case of Brownian motion, using Theorem \ref{thm:1} it is straightforward to obtain Gaussian density in the asymptotics.
\end{example}
\bibliographystyle{acm}
\bibliography{bib_file}

\begin{thebibliography}{10}

\bibitem{Bertoin96}
{\sc Bertoin, J.}
\newblock {\em L\'evy processes}, vol.~121 of {\em Cambridge Tracts in
  Mathematics}.
\newblock Cambridge University Press, Cambridge, 1996.

\bibitem{BGR14}
{\sc Bogdan, K., Grzywny, T., and Ryznar, M.}
\newblock Density and tails of unimodal convolution semigroups.
\newblock {\em J. Funct. Anal. 266}, 6 (2014), 3543--3571.

\bibitem{CPKW20}
{\sc Chen, Z.-Q., Kim, P., Kumagai, T., and Wang, J.}
\newblock Time fractional {P}oisson equations: representations and estimates.
\newblock {\em J. Funct. Anal. 278}, 2 (2020), 108311, 48.

\bibitem{ChoKim19}
{\sc Cho, S., and Kimi, P.}
\newblock Estimates on transition densities of subordinators with jumping
  density decaying in mixed polynomial orders.
\newblock arXiv:1912.10565, 2019.

\bibitem{Emery73}
{\sc Emery, D.~J.}
\newblock Exit problem for a spectrally positive process.
\newblock {\em Advances in Appl. Probability 5\/} (1973), 498--520.

\bibitem{Grzywny14}
{\sc Grzywny, T.}
\newblock On {H}arnack inequality and {H}\"older regularity for isotropic
  unimodal {L}\'evy processes.
\newblock {\em Potential Anal. 41}, 1 (2014), 1--29.

\bibitem{GLT2018}
{\sc Grzywny, T., Le\.{z}aj, {\L}., and Trojan, B.}
\newblock Transition densities of subordinators.
\newblock arXiv:1812.06793, 2018.

\bibitem{GS2017}
{\sc Grzywny, T., and Szczypkowski, K.}
\newblock Estimates of heat kernel for non-symmetric {L}\'{e}vy processes.
\newblock arXiv:1710.07793, 2017.

\bibitem{GS2019}
{\sc Grzywny, T., and Szczypkowski, K.}
\newblock L\'{e}vy processes: concentration function and heat kernel bounds.
\newblock arXiv:1907.00778, 2019.

\bibitem{HK2011}
{\sc Hubalek, F., and Kyprianou, E.}
\newblock Old and new examples of scale functions for spectrally negative
  {L}\'{e}vy processes.
\newblock In {\em Seminar on {S}tochastic {A}nalysis, {R}andom {F}ields and
  {A}pplications {VI}}, vol.~63 of {\em Progr. Probab.} Birkh\"{a}user/Springer
  Basel AG, Basel, 2011, pp.~119--145.

\bibitem{Jacob}
{\sc Jacob, N.}
\newblock {\em Pseudo differential operators and {M}arkov processes. {V}ol.
  {I}}.
\newblock Imperial College Press, London, 2001.
\newblock Fourier analysis and semigroups.

\bibitem{KKPS15}
{\sc Kaleta, K., and Sztonyk, P.}
\newblock Estimates of transition densities and their derivatives for jump
  {L}\'evy processes.
\newblock {\em J. Math. Anal. Appl. 431}, 1 (2015), 260--282.

\bibitem{KKPS17}
{\sc Kaleta, K., and Sztonyk, P.}
\newblock Small-time sharp bounds for kernels of convolution semigroups.
\newblock {\em J. Analyse Math. 132}, 1 (2017), 355--394.

\bibitem{VK13}
{\sc Knopova, V.}
\newblock Compound kernel estimates for the transition probability density of a
  {L}\'evy process in {$\mathbb{R}^n$}.
\newblock {\em Teor. \u Imov\=\i r. Mat. Stat. 89\/} (2013), 51--63.

\bibitem{VKAK13}
{\sc Knopova, V., and Kulik, A.}
\newblock Intrinsic small time estimates for distribution densities of {L}\'evy
  processes.
\newblock {\em Random Oper. Stoch. Equ. 21}, 4 (2013), 321--344.

\bibitem{KKER12}
{\sc Kuznetsov, A., Kyprianou, A.~E., and Rivero, V.}
\newblock The theory of scale functions for spectrally negative {L}\'{e}vy
  processes.
\newblock In {\em L\'{e}vy matters {II}}, vol.~2061 of {\em Lecture Notes in
  Math.} Springer, Heidelberg, 2012, pp.~97--186.

\bibitem{Kyprianou2006}
{\sc Kyprianou, A.~E.}
\newblock {\em Introductory lectures on fluctuations of {L}\'{e}vy processes
  with applications}.
\newblock Universitext. Springer-Verlag, Berlin, 2006.

\bibitem{Picard97}
{\sc Picard, J.}
\newblock Density in small time for {L}\'evy processes.
\newblock {\em ESAIM Probab. Statist. 1\/} (1995/97), 357--389.

\bibitem{Pruitt81}
{\sc Pruitt, W.~E.}
\newblock The growth of random walks and {L}\'evy processes.
\newblock {\em Ann. Probab. 9}, 6 (1981), 948--956.

\bibitem{Rogers90}
{\sc Rogers, L. C.~G.}
\newblock The two-sided exit problem for spectrally positive {L}\'{e}vy
  processes.
\newblock {\em Adv. in Appl. Probab. 22}, 2 (1990), 486--487.

\bibitem{SamorodnitskyTaqqu94}
{\sc Samorodnitsky, G., and Taqqu, M.~S.}
\newblock {\em Stable non-{G}aussian random processes: Stochastic models with
  infinite variance}.
\newblock Stochastic Modeling. Chapman \& Hall, New York, 1994.

\bibitem{Sato}
{\sc Sato, K.-i.}
\newblock {\em L\'evy processes and infinitely divisible distributions},
  vol.~68 of {\em Cambridge Studies in Advanced Mathematics}.
\newblock Cambridge University Press, Cambridge, 1999.
\newblock Translated from the 1990 Japanese original, Revised by the author.

\bibitem{SSVBernstein}
{\sc Schilling, R.~L., Song, R., and Vondra{\v{c}}ek, Z.}
\newblock {\em Bernstein functions}, second~ed., vol.~37 of {\em de Gruyter
  Studies in Mathematics}.
\newblock Walter de Gruyter \& Co., Berlin, 2012.
\newblock Theory and applications.

\bibitem{Takacs67}
{\sc Tak\'{a}cs, L.}
\newblock {\em Combinatorial methods in the theory of stochastic processes}.
\newblock John Wiley \& Sons, Inc., New York-London-Sydney, 1967.

\bibitem{Zolotarev64}
{\sc Zolotarev, V.~M.}
\newblock The first passage time of a level and the behaviour at infinity for a
  class of processes with independent increments.
\newblock {\em Theory Probab. Appl 9}, 4 (1964), 653--662.

\bibitem{Zolotarev86}
{\sc Zolotarev, V.~M.}
\newblock {\em One-dimensional stable distributions}, vol.~65 of {\em
  Translations of Mathematical Monographs}.
\newblock American Mathematical Society, Providence, RI, 1986.
\newblock Translated from the Russian by H. H. McFaden, Translation edited by
  Ben Silver.

\end{thebibliography}
\end{document}